\newtheorem{thm}{Theorem}
\newtheorem{cor}{Corollary}
\newtheorem {conj}{Conjecture}
\theoremstyle{definition}
\newtheorem{defin}{Definition}
\theoremstyle{remark}
\newtheorem{rem}{Remark}
\numberwithin{equation}{section}
\begin{document}
\title[The theory of universality for zeta and $L$-functions]
{A survey on the theory of universality for zeta and $L$-functions}
\author{Kohji Matsumoto}
\date{}
\maketitle
\begin{abstract}
We survey the results and the methods in the theory of universality
for various zeta and $L$-functions, obtained in these forty years after
the first discovery of the universality for the Riemann zeta-function by
Voronin.
\end{abstract}

\tableofcontents

\section{Voronin's universality theorem}\label{sec-1}

Let $\mathbb{N}$ be the set of positive integers, 
$\mathbb{N}_0=\mathbb{N}\cup\{0\}$,
$\mathbb{Z}$ the ring of rational
integers, $\mathbb{Q}$ the field of rational numbers, $\mathbb{R}$ the field of
real numbers, and $\mathbb{C}$ the field of complex numbers.   In the present
article, the letter $p$ denotes a prime number.

For any open region $D\subset\mathbb{C}$, denote by $H(D)$ the space of
$\mathbb{C}$-valued holomorphic functions defined on $D$, equipped with the
topology of uniform convergence on compact sets.

For any subset $K\subset\mathbb{C}$, let $H^c(K)$ be the set of 
continuous functions defined on $K$, and are holomorphic in the interior of $K$,
and let $H_0^c(K)$ be the set of all elements of $H^c(K)$ which are
non-vanishing on $K$.
By ${\rm meas}\;S$ we mean the usual Lebesgue measure of the set $S$, and
by $\# S$ the cardinality of $S$.
 
Let $s=\sigma+it\in\mathbb{C}$ (where $i=\sqrt{-1}$), and
$\zeta(s)$ the Riemann zeta-function.   This function is defined by the
infinite series $\zeta(s)=\sum_{n=1}^{\infty}n^{-s}$ in the half-plane
$\sigma=\Re s>1$, and can be continued meromorphically to the whole of
$\mathbb{C}$.   It is well known that the investigation of
$\zeta(s)$ in the strip $0<\sigma<1$ is extremely important in number
theory, but its behaviour there is still quite mysterious.    A typical
example of expressing such mysterious feature of $\zeta(s)$ is Voronin's
{\it universality theorem}.

Consider the closed disc $K(r)$ with center $3/4$ and radius $r$, where
$0<r<1/4$.
Then Voronin \cite{Vor75} proved that for any $f\in H_0^c(K(r))$ and any
$\varepsilon>0$, there exists a positive number $\tau$ for which
\begin{align}\label{1-1}
\max_{s\in K(r)}|\zeta(s+i\tau)-f(s)|<\varepsilon
\end{align}
holds.   Roughly speaking, {\it any non-vanishing holomorphic function
can be approximated uniformly by a certain shift of $\zeta(s)$}. 
  
Actually, Voronin's proof essentially includes the fact that the set of
such $\tau$ has a positive lower density.    Moreover, now it is known
that the disc can be replaced by more general type of sets.   The modern
statement of Voronin's theorem is as follows.    
Let 
$$D(a,b)=\{s\in\mathbb{C}\;|\;a<\sigma<b\}.$$

\begin{thm}\label{thm1-1}
{\rm (Voronin's universality theorem)}
Let $K$ be a compact subset of $D(1/2,1)$ with connected complement, and 
$f\in H_0^c(K)$.   Then, for any $\varepsilon>0$,
\begin{align}\label{1-2} 
\liminf_{T\to\infty}\frac{1}{T}{\rm meas}\left\{\tau\in[0,T]\;\left|\;
\sup_{s\in K}|\zeta(s+i\tau)-f(s)|<\varepsilon\right.\right\}>0
\end{align}
holds.
\end{thm}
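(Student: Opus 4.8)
The plan is to run the now-standard probabilistic argument (due in essence to Bagchi, refining Voronin's original construction), whose three pillars are a reduction to a convenient target function, a limit theorem identifying the ``distribution'' of $\tau\mapsto\zeta(s+i\tau)$, and a support computation. For the reduction: since $K$ has connected complement and $f\in H_0^c(K)$ is non-vanishing, a branch of $\log f$ exists in $H^c(K)$, and by Mergelyan's theorem it is uniformly approximable on $K$ by a polynomial $q$. Then $g_0(s)=e^{q(s)}$ is an entire, nowhere-vanishing function with $\sup_{s\in K}|g_0(s)-f(s)|<\varepsilon/2$ (for $q$ chosen close enough, using boundedness of $f$ on $K$). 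Hence it suffices to establish \eqref{1-2} with $f$ replaced by a function $g_0\in H(D(1/2,1))$ that is nowhere zero on the whole strip and with $\varepsilon$ replaced by $\varepsilon/2$.

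Next I would set up the probabilistic model. Let $\Omega=\prod_p\{z\in\mathbb{C}:|z|=1\}$ carry the product of normalized Haar measures, and for $\omega=(\omega_p)\in\Omega$ define the random Euler product $\zeta(s,\omega)=\prod_p(1-\omega_p p^{-s})^{-1}$; a martingale/orthogonality argument, resting on $\sum_p p^{-2\sigma}<\infty$ for $\sigma>1/2$, shows this converges for almost every $\omega$ to an element of $H(D(1/2,1))$. The key limit theorem is that, as $T\to\infty$, the push-forward of the uniform measure on $[0,T]$ under $\tau\mapsto\zeta(\cdot+i\tau)$ converges weakly (in the space of probability measures on $H(D(1/2,1))$) to the law $P$ of $\zeta(\cdot,\omega)$. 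I would prove this from two ingredients: the linear independence of $\{\log p\}_p$ over $\mathbb{Q}$, which by the Kronecker--Weyl theorem makes $(p^{-i\tau})_p$ equidistribute on $\Omega$, handling finite Euler products; and an approximation estimate, via a smoothed truncation of the Dirichlet series together with a classical mean-square bound for $\zeta$ on vertical lines in $\sigma>1/2$, showing that $\zeta(\cdot+i\tau)$ is close on average in the metric of $H(D(1/2,1))$ to a finite Euler product.

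The heart of the matter — and the step I expect to be the main obstacle — is the \emph{support theorem}: the support of $P$ is exactly $\{g\in H(D(1/2,1)):g\equiv 0\ \text{or}\ g(s)\neq 0\ \text{for all}\ s\}$. I would prove it by passing to logarithms, writing $\log\zeta(s,\omega)=\sum_p\log(1-\omega_p p^{-s})^{-1}$ as a sum of independent $H(D(1/2,1))$-valued random variables whose support is the closure of the set of all convergent series formed from the individual summands over arbitrary unimodular $\omega_p$. Because the higher-order terms $\sum_{k\ge 2}\omega_p^k p^{-ks}/k$ sum absolutely, this reduces to analyzing $\sum_p\omega_p p^{-s}$, and here one invokes a Hilbert-space rearrangement theorem of Pécherski/Steinitz type: in the relevant Bergman-type Hilbert space this series is square-summable (as $\sum_p p^{-2\sigma}<\infty$ for $\sigma>1/2$) but, for every non-trivial continuous linear functional, not absolutely summable (as $\sum_p p^{-\sigma}=\infty$ for $\sigma\le 1$), whence the set of its sums, over all choices of $\omega_p$ and all convergent rearrangements, is dense. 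Exponentiating and using that every nowhere-vanishing $g_0$ has a logarithm in $H(D(1/2,1))$ then shows $g_0\in\mathrm{supp}\,P$. Carrying out the rearrangement argument with functions rather than abstract vectors, and tracking carefully where the convergence is merely conditional, is the delicate bookkeeping.

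Finally I would assemble the pieces. Given the nowhere-vanishing $g_0$ and $\varepsilon>0$, the set $G=\{h\in H(D(1/2,1)):\sup_{s\in K}|h(s)-g_0(s)|<\varepsilon/2\}$ is open, so by the portmanteau theorem and the weak convergence above,
\[
\liminf_{T\to\infty}\frac1T\,\mathrm{meas}\bigl\{\tau\in[0,T]:\zeta(\cdot+i\tau)\in G\bigr\}\ \ge\ P(G),
\]
and $P(G)>0$ precisely because $g_0$ lies in the support of $P$. Combining this positive lower density with the Mergelyan reduction via the triangle inequality yields \eqref{1-2}.
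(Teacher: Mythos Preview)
Your proposal is correct and follows Bagchi's probabilistic method, which is precisely one of the three proofs the paper surveys (alongside Voronin's original and Good's); the paper itself, being a survey, does not give a full proof but only outlines the basic structure---Euler-product approximation in the mean, the Kronecker--Weyl theorem via linear independence of $\{\log p\}$, Mergelyan's theorem, and Pecherski{\u\i}'s rearrangement theorem---all of which appear in your sketch in exactly the places the paper indicates. Your write-up is thus fully compatible with the paper's account.
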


Let $\varphi(s)$ be a Dirichlet series, and let $K$ be a compact subset of 
$D(a,b)$ with connected complement.   
If
\begin{align}\label{1-3} 
\liminf_{T\to\infty}\frac{1}{T}{\rm meas}\left\{\tau\in[0,T]\;\left|\;
\sup_{s\in K}|\varphi(s+i\tau)-f(s)|<\varepsilon\right.\right\}>0
\end{align}
holds for any $f\in H_0^c(K)$ and any $\varepsilon>0$, 
then we call that the {\it universality} holds
for $\varphi(s)$ in the region $D(a,b)$.
Theorem \ref{thm1-1} implies that the universality
holds for the Riemann zeta-function in the region $D(1/2,1)$.

The Riemann zeta-function has the Euler product expression
$\zeta(s)=\prod_p(1-p^{-s})^{-1}$, where $p$ runs over all prime
numbers.    This is valid only in the region $\sigma>1$, but even
in the region $D(1/2,1)$, it is possible to show that
a finite truncation of 
the Euler product  ``approximates'' $\zeta(s)$ in a certain mean-value sense.
On the other hand, since 
$\{\log p\}_p$ is linearly independent over 
$\mathbb{Q}$, we can apply the Kronecker-Weyl approximation theorem to obtain
that any target function $f(s)$ can be approximated by the above finite
truncation.    This is the basic structure of
the proof of Theorem \ref{thm1-1}.

\begin{rem}\label{rem1-0}
Here we recall the statement of the Kronecker-Weyl theorem.   For $x\in\mathbb{R}$,
the symbol $||x||$ stands for the distance from $x$ to the nearest integer.
Let $\alpha_1,\ldots,\alpha_m$ be real numbers, linearly independent over
$\mathbb{Q}$.   Then, for any real numbers $\theta_1,\ldots,\theta_m$ and any
$\varepsilon>0$, 
\begin{align}\label{1-0}
\lim_{T\to\infty}\frac{1}{T}{\rm meas}\left\{\tau\in[0,T]\;\left|\;
||\tau\alpha_k-\theta_k||<\varepsilon\; (1\leq k\leq m)\right.\right\}>0
\end{align}
holds.
\end{rem}

So far, three proofs are known for this theorem.    Needless to say, one 
of them is Voronin's original proof, which is also reproduced in
\cite{KarVor92}.    This proof is based on, besides the above facts,
Pecherski{\u\i}'s rearrangement theorem \cite{Pec73} in Hilbert spaces.
The second proof is given by Good \cite{Goo81}, which will be discussed in
Section \ref{sec-16}.   Gonek was inspired by the idea of Good to write his
thesis \cite{Gon79}, in which he gave a modified version of Good's argument.
The third is a more probabilistic proof
due to Bagchi \cite{Bag81} \cite{Bag82}.    Bagchi \cite{Bag81} is an
unpublished thesis, but its contents are carefully expounded in
\cite{Lau96}.    A common feature of the work of Gonek and Bagchi is that they
both used the approximation theorem of Mergelyan \cite{Mer51}\cite{Mer52}.

\begin{rem}\label{rem1-01}
Mergelyan's theorem asserts that, when $K$ is  a compact subset of $\mathbb{C}$, 
any $f\in H^c(K)$ can be approximated by polynomials uniformly on $K$.    This is
a complex analogue of the classical approximation theorem of Weierstrass.
\end{rem}

Here we mention some pre-history.   In 1914, Bohr and Courant \cite{BohCou14} 
proved that, for any $\sigma$ satisfying $1/2<\sigma\leq 1$, the set
$$
\{\zeta(\sigma+i\tau)\;|\;\tau\in\mathbb{R}\}
$$
is dense in $\mathbb{C}$.
In the next year, 
Bohr \cite{Boh15} proved that the same result holds for $\log\zeta(\sigma+i\tau)$.
These results are called {\it denseness theorems}.

Before obtaining his universality theorem, Voronin \cite{Vor72} discovered the
following multi-dimensional analogue of the theorem of Bohr and Courant.   

\begin{thm}\label{thm1-2}
{\rm (Voronin \cite{Vor72})}
For any $\sigma$ satisfying $1/2<\sigma\leq 1$, the set
$$
\{(\zeta(\sigma+i\tau),\zeta^{\prime}(\sigma+i\tau),\ldots,
\zeta^{(m-1)}(\sigma+i\tau))\;|\;\tau\in\mathbb{R}\}
$$
{\rm (}where $\zeta^{(k)}$ denotes the $k$-th derivative{\rm)} is dense in
$\mathbb{C}^m$.
\end{thm}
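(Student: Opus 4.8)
Although historically prior to Theorem~\ref{thm1-1}, this statement is implied by it, so the quickest proof today runs through that implication; afterwards I sketch the direct argument, which is the original one and also reaches the boundary value $\sigma=1$.

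\emph{Deduction from Theorem~\ref{thm1-1}.} Fix $\sigma$ with $1/2<\sigma<1$, a target $(w_0,\dots,w_{m-1})\in(\mathbb{C}\setminus\{0\})\times\mathbb{C}^{m-1}$ and $\varepsilon>0$. First I would manufacture an entire, non-vanishing function with the prescribed jet at $\sigma$: solving the triangular system
\[
e^{\lambda_0}=w_0,\qquad w_0\lambda_1=w_1,\qquad w_0(\lambda_2+\lambda_1^{2})=w_2,\qquad\dots
\]
(possible since $w_0\neq0$) and putting $P(s)=\sum_{k=0}^{m-1}\lambda_k(s-\sigma)^{k}/k!$, the function $g=e^{P}$ satisfies $g^{(j)}(\sigma)=w_j$ for $0\le j\le m-1$ and never vanishes. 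Choose a closed disc $K=\{\,|s-\sigma|\le r\,\}\subset D(1/2,1)$; then $f:=g|_K\in H_0^c(K)$ and $K$ has connected complement, so Theorem~\ref{thm1-1} produces $\tau$ --- indeed a set of positive lower density of such $\tau$ --- with $\sup_{s\in K}|\zeta(s+i\tau)-f(s)|<\delta$. Cauchy's integral formula over the circle $|u-\sigma|=r/2$ then gives $|\zeta^{(j)}(\sigma+i\tau)-w_j|=|\zeta^{(j)}(\sigma+i\tau)-f^{(j)}(\sigma)|\le j!\,(2/r)^{j}\,\delta$ for $0\le j\le m-1$, which is $<\varepsilon$ once $\delta$ is small in terms of $\varepsilon$, $m$ and $r$. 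As $(\mathbb{C}\setminus\{0\})\times\mathbb{C}^{m-1}$ is dense in $\mathbb{C}^m$ and each of its points has just been approximated by a vector $\big(\zeta(\sigma+i\tau),\dots,\zeta^{(m-1)}(\sigma+i\tau)\big)$, the set in the theorem is dense in $\mathbb{C}^m$.

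\emph{The direct argument.} Voronin's original proof does not use Theorem~\ref{thm1-1} and reaches $\sigma=1$. By the same $\exp$-construction it reduces to the density in $\mathbb{C}^m$ of the jet of $\log\zeta$ at $\sigma+i\tau$. One approximates $\log\zeta$ by the finite Euler truncations $\sum_{p\le X}\big(-\log(1-p^{-s})\big)$ in a mean-square sense; and since $\{\log p\}_p$ is linearly independent over $\mathbb{Q}$ (unique factorization), the Kronecker--Weyl theorem (Remark~\ref{rem1-0}) lets one make $(p^{-i\tau})_{p\le X}$ simultaneously close to any prescribed unimodular values. The $j$-th $s$-derivative of $-\log(1-p^{-s})$ has leading term $(-\log p)^{j}p^{-s}$, so to leading order the relevant sums are $\sum_{p\le X}\theta_p\,p^{-\sigma}u_p$ with $u_p=\big((-\log p)^{j}\big)_{j=0}^{m-1}\in\mathbb{C}^m$; here any $m$ of the $u_p$ are linearly independent (Vandermonde), while $\sum_p p^{-\sigma}\|u_p\|\asymp\sum_p p^{-\sigma}(\log p)^{m-1}=\infty$ precisely because $\sigma\le1$, so a rearrangement argument of L\'evy--Steinitz type shows these sums fill $\mathbb{C}^m$. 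For $1/2<\sigma<1$ this is made rigorous by proving weak convergence, as $T\to\infty$, of the law of the $H(D(1/2,1))$-valued random shift $\zeta(\,\cdot\,+i\tau)$, $\tau\in[0,T]$, to the law of a random Euler product whose support is the set of all non-vanishing holomorphic functions; composing with the continuous map $h\mapsto(h(\sigma),\dots,h^{(m-1)}(\sigma))$ and invoking the portmanteau inequality even gives positive lower density for every target ball. For $\sigma=1$ one argues by hand, using that $\log\zeta(1+it)-\sum_p p^{-1-it}$ is given by an absolutely convergent series while the remaining sum is conditionally convergent for $t\neq0$. (That $\sigma\le1$ is indispensable is plain: for $\sigma>1$ the Euler product makes $\{\zeta(\sigma+it)\}_t$ bounded and bounded away from $0$, hence not dense.)

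\emph{Where the difficulty lies.} In the first route there is no genuine obstacle: one is merely unpacking a much stronger theorem. In the direct route the substantial ingredient is the analytic lemma invoked above without comment --- that finite Euler products approximate $\zeta$ in mean square on vertical lines inside the critical strip, uniformly in the length of the interval. This rests on second-moment estimates for $\zeta$ in $1/2<\sigma<1$ (via an approximate functional equation, or Montgomery--Vaughan-type mean value theorems), and is the only point where the mysterious behaviour of $\zeta$ in the strip is really used. One technical subtlety worth flagging: $(\log\zeta)'$ has non-$L^{2}$ singularities at any zeros of $\zeta$ on the line $\Re s=\sigma$ (whose existence is not excluded), which is why the mean-square approximation is best carried out for $\zeta$ itself in $H(D(1/2,1))$ --- where $\zeta(\,\cdot\,+i\tau)$ is holomorphic for every $\tau$ --- with the derivatives recovered afterwards by Cauchy's formula, exactly as in the first route.
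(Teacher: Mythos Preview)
Your proposal is correct and its first route is exactly the approach the paper indicates: in Section~\ref{sec-13} the author remarks that ``Theorem~\ref{thm1-2} is just an immediate consequence of the universality theorem'' and refers to \cite[Theorem~6.6.2]{Lau96}, without spelling out the details you supply (the $e^{P}$ jet construction and the Cauchy estimate for derivatives).

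You are actually more careful than the survey on one point: Theorem~\ref{thm1-1} lives in $D(1/2,1)$, so the deduction only reaches $1/2<\sigma<1$, and you explicitly flag that the boundary value $\sigma=1$ needs Voronin's direct argument. The paper's one-line remark does not address this. Your sketch of the direct route --- linear independence of $\{\log p\}$, Kronecker--Weyl, the Vandermonde in $(-\log p)^{j}$, divergence of $\sum_p p^{-\sigma}(\log p)^{m-1}$ for $\sigma\le 1$, and a Pecherski\u{\i}/L\'evy--Steinitz rearrangement step --- is the right skeleton of the original proof (cf.\ \cite{Vor72}, \cite[Chapter~VII]{KarVor92}), and your caveat about handling $\log\zeta$ versus $\zeta$ itself in the strip is well taken.
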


\begin{rem}\label{rem1-1}
Actually Voronin proved a stronger result.   
For any $s=\sigma+it$ with $1/2<\sigma\leq 1$ and for
any $h>0$, the set
$$
\{(\zeta(s+inh),\zeta^{\prime}(s+inh),\ldots,                                   
\zeta^{(m-1)}(s+inh))\;|\;n\in\mathbb{N}\}                                          
$$
is dense in $\mathbb{C}^m$.
\end{rem}

The universality theorem of Voronin may be regarded as a natural next step, 
bacause it is a kind of infinite-dimensional analogue of the theorem of Bohr and
Courant, or a {\it denseness theorem in a function space}.

Another refinement of the denseness theorem of Bohr and Courant is the 
{\it limit theorem} on $\mathbb{C}$, due to Bohr and Jessen \cite{BohJes3032}.
Recently, this theorem is usually formulated by probabilistic terminology.
Let $\sigma>1/2$.   For any Borel subset $A\subset\mathbb{C}$, put
$$
P_{T,\sigma}(A)=\frac{1}{T}{\rm meas}\left\{\tau\in[0,T]\;|\;
\zeta(\sigma+i\tau)\in A\right\}.
$$
This $P_{T,\sigma}$ is a probability measure on $\mathbb{C}$.   Then the modern
formulation of the limit theorem of Bohr and Jessen is that there exists a
probability measure $P_{\sigma}$, to which $P_{T,\sigma}$ is convergent weakly
as $T\to\infty$ (see \cite[Chapter 4]{Lau96}).

Bagchi \cite{Bag81} proved an analogue of the above theorem of Bohr and Jessen
on a certain function space, and used it in his alternative proof of the
universality theorem.   Therefore, to prove some universality-type theorem by
Bagchi's method, it is necessary to obtain some {\it functional limit theorem}
similar to that of Bagchi.   There are indeed a lot of papers devoted to the
proofs of various functional limit theorems, for the purpose of showing various
universality theorems.   However in the following sections we do not mention
explicitly this closely related topic.   For the details of functional limit 
theorems, see Laurin{\v c}ikas \cite{Lau96} or J. Steuding \cite{Ste07}.   
The connection between the theory of
universality and functional limit theorems is also discussed in the author's
survey articles \cite{Mat04}\cite{Mat06}.

After the publication of Voronin's theorem in 1975, now almost forty
years have passed.    
Voronin's theorem attracted a lot of mathematicians, and hence, after Voronin,
quite many papers on universality theory have been published.
The aim of the present article is to survey the developments
in this theory in these forty years.
The developments in these years can be divided into three stages.

(I) The first stage: 1975 $\sim$ 1987.

(II) The second stage: 1996 $\sim$ 2007.

(III) The third stage: 2007 $\sim$ present.

\noindent In the next section we will give a brief discussion what were
the main topics in each of these stages.
\bigskip

The author expresses his sincere gratitude to Professors
R. Garunk{\v s}tis, R. Ka{\v c}inskait{\. e}, E. Karikovas, 
A. Laurin{\v c}ikas, R. Macaitien{\. e}, H. Mishou,
H. Nagoshi, T. Nakamura, {\L}. Pa{\' n}kowski, 
D. {\v S}iau{\v c}i{\= u}nas, and J. Steuding for their valuable comments
and/or sending relevant articles.

\section{A rough sketch of the history}\label{sec-2}

(I) The first stage.

This is the first decade after Voronin's paper.   The original 
impact of Voronin's discovery was still fresh.   Mathematicians who
were inspired by Voronin's paper tried to discuss various
generalizations, analogies, refinements and so on.    Here is the
list of main results obtained in this decade.

$\bullet$ Alternative proofs (mentioned in Section \ref{sec-1}).

$\bullet$ Generalizations to the case of Dirichlet $L$-functions,
Dedekind zeta-functions etc.

$\bullet$ The joint universality.

$\bullet$ The strong universality (for Hurwitz zeta-functions).

$\bullet$ The strong recurrence.

$\bullet$ The discrete universality.

$\bullet$ The $\chi$-universality.

$\bullet$ The hybrid universality.

$\bullet$ A quantitative result.

\noindent It is really amazing that many important aspects in 
universality theory, developed extensively in later decades, had
already been introduced and studied in this first decade.
Unfortunately, however, many of those results were written only
in the theses of Voronin \cite{Vor77}, Gonek \cite{Gon79} and
Bagchi \cite{Bag81}, all of them remain unpublished.    This 
situation is probably one of the reasons why in the next several years 
there were so few publications on universality.

(II) The second stage.

During several years around 1990, the number of publications
concerning universality is very small.    Of course it is not
completely empty.    For example, the book of Karatsuba and Voronin 
\cite{KarVor92} was published in this period.
But the author prefers
to choose the year 1996 as the starting point of the second stage,
because in this year the important book \cite{Lau96} of
Laurin{\v c}ikas was published.    This is the first textbook which
is mainly devoted to the theory of universality and related topics, and especially,
provides the details of unpublished work of Bagchi.    Thanks to
the existence of this book, many mathematicians of younger
generation can now easily go into the theory of universality.
In fact, in this second stage, a lot of students of 
Laurin{\v c}ikas started to publish their papers, and they formed
the strong Lithuanian school.

The main topic in this decade was probably the attempt to extend
the class of zeta and $L$-functions for which the universality
property holds.   It is now known that
the universality property is valid for a rather wide class of zeta-functions.
J. Steuding's lecture note \cite{Ste07} includes the exposition of
this result, and also of many other results obtained after the 
publication of the book of
Laurin{\v c}ikas \cite{Lau96}.    Therefore the publication year of 
this lecture note is appropriate to the end of the second stage. 

(III) The third stage.

Now comes the third, present stage.    The theory of universality is
now developing into several new directions.   The notions of

$\bullet$ the mixed universality,

$\bullet$ the composite universality,

$\bullet$ the ergodic universality,

\noindent were introduced recently.    Other topics in universality theory have
also been discussed extensively.

In the following sections, we will discuss more closely each topic in
universality theory.

\section{Generalization to zeta and $L$-functions with Euler products}\label{sec-3}

Is it possible to prove the universality property for other zeta and $L$-functions?
This is surely one of
the most fundamental question.   In Section \ref{sec-1} we explained that
a key point in the proof of Theorem \ref{thm1-1} is the Euler product
expression.    Therefore we can expect the universality property for
other zeta and $L$-functions which have Euler products.   

The universality of the following zeta and $L$-functions 
were proved in the first decade.

$\bullet$ Dirichlet $L$-functions 
$L(s,\chi)=\sum_{n=1}^{\infty}\chi(n)n^{-s}$
($\chi$ is a Dirichlet character; see Voronin \cite{Vor75})
in $D(1/2,1)$, 

$\bullet$ certain
Dirichlet series with multiplicative coefficients (see Reich \cite{Rei77},
Laurin{\v c}ikas \cite{Lau79}\cite{Lau79b}\cite{Lau82}\cite{Lau83}\cite{Lau84}),

$\bullet$ Dedekind
zeta-functions $\zeta_F(s)=\sum_{\mathfrak{a}}N(\mathfrak{a})^{-s}$, where
$F$ is a number field, $\mathfrak{a}$ denotes a non-zero integral ideal, and 
$N(\mathfrak{a})$ its norm (see Voronin \cite{Vor77}\cite{Vor79},
Gonek \cite{Gon79}, Reich \cite{Rei80}\cite{Rei82}). 
Here, the universality can be proved in 
$D(1/2,1)$ if $F$ is an Abelian extension of $\mathbb{Q}$ (Gonek \cite{Gon79}), 
but for general $F(\neq\mathbb{Q})$, 
the proof is valid only in the narrower region $D(1-d_F^{-1},1)$, where
$d_F=[F:\mathbb{Q}]$.    The reason is that the mean value estimate for $\zeta_F(s)$,
applicable to the proof of universality, is known at present only 
in $D(1-d_F^{-1},1)$.

Later, Laurin{\v c}ikas also obtained universality theorems for
Matsumoto zeta-functions\footnote{This notion was first introduced by the author
\cite{Mat90}, to which the limit theorem of Bohr and Jessen was generalized. 
See also Ka{\v c}inskait{\.e}'s survey article \cite{Kac12}.}
(\!\!\cite{Lau98} under a strong assumption), 
and for the zeta-function attached to Abelian groups 
(\!\!\cite{Lau01}\cite{Lau03}).
Laurin{\v c}ikas and {\v S}iau{\v c}i{\=u}nas \cite{LauSia06} proved the
universality for the periodic 
zeta-function $\zeta(s,\mathfrak{A})=\sum_{n=1}^{\infty}a_n n^{-s}$
(where $\mathfrak{A}=\{a_n\}_{n=1}^{\infty}$ is a multiplicative periodic sequence 
of complex numbers) when the technical condition
\begin{align}\label{3-0}
\sum_{m=1}^{\infty}|a_{p^m}|p^{-m/2}\leq c
\end{align}
(with a certain constant $c<1$) holds for any prime $p$.
Schwarz, R. Steuding and J. Steuding \cite{SchSteSte07} proved another universality 
theorem on certain general Euler products with conditions on the asymptotic
behaviour of coefficients.

However, there was an obstacle when we try to generalize further.
A typical class of $L$-functions with Euler products is that of 
automorphic $L$-functions.   Let $g(z)=\sum_{n=1}^{\infty}a(n)e^{2\pi inz}$ be 
a holomorphic normalized Hecke-eigen cusp form of weight $\kappa$
with respect to $SL(2,\mathbb{Z})$ 
and let $L(s,g)=\sum_{n=1}^{\infty}a(n)n^{-s}$ be the
associated $L$-function.    The universality of $L(s,g)$ was first
discussed by Ka{\v c}{\. e}nas and Laurin{\v c}ikas \cite{KacLau98}, but they
showed the universality only under a very strong assumption.

What was the obstacle?   The asymptotic formula
\begin{align}\label{3-1}
\sum_{p\leq x}\frac{1}{p}=\log\log x+a_1+O(\exp(-a_2\sqrt{\log x}))
\end{align}
(where $a_1$, $a_2$ are constants)
is classically well known, and is used in the proof of Theorem
\ref{thm1-1}.   However the corresponding asymptotic formula for
the sum $\sum_{p\leq x}|a(p)|p^{-1}$ is not known.
To avoid this obstacle, Laurin{\v c}ikas and the author \cite{LauMat01}
invented a new method of using \eqref{3-1}, combined with the known
asymptotic formula
\begin{align}\label{3-2}
\sum_{p\leq x}|\widetilde{a}(p)|^2=\frac{x}{\log x}(1+o(1)),
\end{align}
where $\widetilde{a}(p)=a(p)p^{-(\kappa-1)/2}$.
This method is called the {\it positive density method}.
Modifying Bagchi's argument by virtue of this positive density method, 
one can show the following unconditional result.

\begin{thm}\label{thm3-1}
{\rm (Laurin{\v c}ikas and Matsumoto \cite{LauMat01})}
The universality holds for $L(s,g)$ in the region
$D(\kappa/2,(\kappa+1)/2)$.
\end{thm}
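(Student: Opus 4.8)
The plan is to transcribe Bagchi's probabilistic proof of Theorem \ref{thm1-1}, replacing the one step that in the classical case rests on the size of $\sum_{p\le x}p^{-1}$ by the positive density method built on \eqref{3-1} and \eqref{3-2}. It is convenient to normalize: setting $\widetilde a(n)=a(n)n^{-(\kappa-1)/2}$, the series $\widetilde L(s,g)=\sum_{n=1}^{\infty}\widetilde a(n)n^{-s}=L(s+(\kappa-1)/2,g)$ is a Dirichlet series with, since $g$ is a normalized Hecke eigenform, the degree-two Euler product $\prod_p(1-\widetilde a(p)p^{-s}+p^{-2s})^{-1}$, and Deligne's bound gives $|\widetilde a(p)|\le 2$; as the shift $s\mapsto s-(\kappa-1)/2$ carries $D(1/2,1)$ onto $D(\kappa/2,(\kappa+1)/2)$, it suffices to prove the universality of $\widetilde L(s,g)$ in $D(1/2,1)$.

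First I would set up the probabilistic model. Let $\Omega=\prod_p\{z\in\mathbb C:|z|=1\}$, a compact abelian group carrying normalized Haar measure; for $\omega\in\Omega$ write $\omega(p)$ for the coordinates and extend $\omega$ completely multiplicatively to $\mathbb N$. Put $\widetilde L(s,g,\omega)=\prod_p(1-\widetilde a(p)\omega(p)p^{-s}+\omega(p)^2p^{-2s})^{-1}$. Since the $\omega(p)$ are independent and uniformly distributed and $|\widetilde a(p)|\le 2$, the $p$-th term of $\sum_p\widetilde a(p)\omega(p)p^{-s}$ has mean zero and variance at most $4p^{-2\sigma}$, which is summable for $\sigma>1/2$, so by Kolmogorov's theorem the series converges for almost all $\omega$, uniformly on compact subsets of $D(1/2,1)$; together with the absolutely convergent contribution of the remaining terms in the logarithm of the Euler factors, standard facts about sums of independent $H(D(1/2,1))$-valued random elements give almost sure convergence of the product in $H(D(1/2,1))$, with the limit almost surely zero-free. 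Next I would prove the functional limit theorem: with $\tau$ uniformly distributed on $[0,T]$, the law of the $H(D(1/2,1))$-valued random element $\widetilde L(\cdot+i\tau,g)$ converges weakly, as $T\to\infty$, to the law of $\widetilde L(\cdot,g,\omega)$. This combines the Kronecker-Weyl theorem of Remark \ref{rem1-0} --- the points $(p^{-i\tau})_p$ become equidistributed in $\Omega$ because $\{\log p\}_p$ is linearly independent over $\mathbb Q$ --- with the second-moment bound $\int_0^T|\widetilde L(\sigma+it,g)|^2\,dt=O(T)$ for each fixed $\sigma>1/2$ (standard for a degree-two $L$-function, and precisely why the region comes out as $D(1/2,1)$), which lets one replace $\widetilde L$ by a finite truncation of its Euler product with negligible mean-square error.

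The crux, and the step where the obstacle lives, is to identify the support of this limit law as $S=\{h\in H(D(1/2,1)):h(s)\neq 0\ \text{for all }s,\ \text{or}\ h\equiv 0\}$. Taking the logarithm of the random Euler product reduces the matter to showing that the set of sums $\sum_p\widetilde a(p)\omega(p)p^{-s}$, as $\omega$ ranges over $\Omega$, is dense in $H(D(1/2,1))$; by the Hilbert-space rearrangement lemma underlying Bagchi's argument --- a descendant of Pecherski{\u\i}'s theorem --- this holds once the series of vectors $p\mapsto\widetilde a(p)p^{-s}$ is suitably conditionally divergent, a condition that in the classical case reduces to $\sum_p p^{-1}=\infty$ but here would demand control of $\sum_p|\widetilde a(p)|p^{-1}$, which is unknown. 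This is where the positive density method enters. For any $\delta<1$, Deligne's bound yields $\sum_{p\le x}|\widetilde a(p)|^2\le 4\,\#\{p\le x:|\widetilde a(p)|\ge\delta\}+\delta^2\,\#\{p\le x\}$, so \eqref{3-2} forces $M_\delta=\{p:|\widetilde a(p)|\ge\delta\}$ to contain a positive proportion of all primes; summation by parts, using this positive lower density together with \eqref{3-1}, then gives $\sum_{p\in M_\delta}p^{-1}=\infty$. Carrying out the whole rearrangement argument using only the primes in $M_\delta$ --- on which $\delta\le|\widetilde a(p)|\le 2$, so the coefficients are bounded above and below and the argument of $\widetilde a(p)$ is absorbed into the rotation $\omega(p)$ --- the divergence $\sum_{p\in M_\delta}p^{-1}=\infty$ is precisely what the lemma needs, and one concludes that the support is $S$, unconditionally.

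Finally I would conclude as for Theorem \ref{thm1-1}. Given a compact $K\subset D(1/2,1)$ with connected complement and $f\in H_0^c(K)$, choose a holomorphic logarithm of $f$ on a neighbourhood of $K$ and, by Mergelyan's theorem (Remark \ref{rem1-01}), a polynomial $q$ with $\sup_{s\in K}|e^{q(s)}-f(s)|<\varepsilon/2$; since $e^{q}$ is non-vanishing it lies in $S$, so $G=\{h\in H(D(1/2,1)):\sup_{s\in K}|h(s)-e^{q(s)}|<\varepsilon/2\}$ is open and the limit law assigns it positive measure. As $G$ is open, the functional limit theorem together with the portmanteau theorem gives $\liminf_{T\to\infty}\tfrac1T\,{\rm meas}\{\tau\in[0,T]:\widetilde L(\cdot+i\tau,g)\in G\}>0$, and for such $\tau$ one has $\sup_{s\in K}|\widetilde L(s+i\tau,g)-f(s)|<\varepsilon$; undoing the shift $s\mapsto s-(\kappa-1)/2$ gives the assertion for $L(s,g)$ in $D(\kappa/2,(\kappa+1)/2)$. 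Everything except the positive density paragraph is a line-by-line adaptation of Bagchi's method to the degree-two Euler factor of $L(s,g)$; that paragraph is where essentially all of the new work lies, and the main obstacle --- the missing asymptotics of $\sum_{p\le x}|\widetilde a(p)|p^{-1}$ --- is exactly what it circumvents.
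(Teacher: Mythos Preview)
Your proposal is correct and follows exactly the approach the paper describes: Bagchi's probabilistic scheme (functional limit theorem, identification of the support, Mergelyan and portmanteau to finish) with the single novel ingredient being the positive density method, which combines Deligne's bound $|\widetilde a(p)|\le 2$ with \eqref{3-2} to extract a positive-density set of primes on which $|\widetilde a(p)|\ge\delta$, and then uses \eqref{3-1} to get the required divergence. Since the paper is a survey and only sketches the method rather than giving a full proof, your write-up is in fact more detailed than what appears here, but it matches the paper's account of \cite{LauMat01} in every essential respect.
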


The positive density
method was then applied to prove the universality for more general class of 
$L$-functions; certain Dirichlet series with multiplicative coefficients
(Laurin{\v c}ikas and {\v S}le{\v z}evi{\v c}ien{\.e} \cite{LauSle02}),
$L$-functions attached to new forms with respect to congruence subgroups 
(Laurin{\v c}ikas, Matsumoto and J. Steuding \cite{LauMatSte03}), 
$L$-functions attached to a cusp form with character
(Laurin{\v c}ikas and Macaitien{\.e} \cite{LauMac12}),
and a certain subclass of the Selberg class\footnote{The notion
of Selberg class was introduced by Selberg \cite{Sel92}.   For basic definitions and
results in this theory, consult a survey \cite{KaczPer99} of Kaczorowski and Perelli,
or J. Steuding \cite{Ste07}.}
(J. Steuding \cite{Ste03}).   
J. Steuding extended his result further
in his lecture note \cite{Ste07}.   He introduced a wide class $\widetilde{S}$ of
$L$-functions defined axiomatically and proved the universality for elements of
$\widetilde{S}$.   The class $\widetilde{S}$, now sometimes called the Steuding
class, is not included in the Selberg class, but is a subclass of the class of 
Matsumoto zeta-functions.

Since the Shimura-Taniyama conjecture has been established, we now know that the
$L$-function $L(s,E)$ attached to a non-singular elliptic curve $E$ over
$\mathbb{Q}$ is an $L$-function attached to a new form.   Therefore the universality
for $L(s,E)$ is included in \cite{LauMatSte03}.    
The universality of positive powers of $L(s,E)$ was studied in 
Garbaliauskien{\.e} and Laurin{\v c}ikas \cite{GarbLau05}.

Mishou \cite{Mis01}\cite{Mis03} used a variant of the positive density method to show
the universality for Hecke $L$-functions of algebraic number fields
in the region $D(1-d_F^{-1},1)$.   
Lee \cite{Lee12} showed that, under the assumption of a certain density
estimate of the number of zeros, it is possible to prove the universality for 
Hecke $L$-functions in the region $D(1/2,1)$.
The universality for Artin $L$-functions was proved by Bauer \cite{Bau03} by
a different method, based on Voronin's original idea.

Let $g_1$ and $g_2$ be cusp forms.   The universality for the Rankin-Selberg
$L$-function $L(s,g_1\otimes g_1)$ was shown by the author \cite{Mat01}, 
and for $L(s,g_1\otimes g_2)$ ($g_1\neq g_2$) was by Nagoshi \cite{Nag09} 
(both in the narrower region
$D(3/4,1)$).   The latter proof is based
on the above general result of J. Steuding \cite{Ste03}\cite{Ste07}.
The universality of symmetric $m$-th power $L$-functions ($m\leq 4$) and their
Rankin-Selberg $L$-functions was studied by Li and Wu \cite{LiWu07}.

Another general result obtained by the positive density method is the
following theorem, which is an extension of the result of J. Steuding \cite{Ste03}.

\begin{thm}\label{thm3-2}
{\rm (Nagoshi and J. Steuding \cite{NagSte10})}
Let $\varphi(s)=\sum_{n=1}^{\infty}a(n)n^{-s}$ be a Dirichlet
series belonging to the Selberg class.   Denote the degree of $\varphi(s)$ by 
$d_{\varphi}$, and put 
$\sigma_{\varphi}=\max\{1/2,1-d_{\varphi}^{-1}\}$.
Assume that
\begin{align}\label{3-3}                                                                
\sum_{p\leq x}|a(p)|^2=\frac{x}{\log x}(\lambda+o(1))
\end{align}
holds with a certain positive constant $\lambda$.  Then the universality holds 
for $\varphi(s)$ in the region $D(\sigma_{\varphi},1)$. 
\end{thm}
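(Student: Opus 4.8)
The plan is to run Bagchi's probabilistic proof of universality, in the form adapted to automorphic $L$-functions in Theorem~\ref{thm3-1}, replacing the classical use of \eqref{3-1} by the positive density method of \cite{LauMat01}. Since $\varphi$ belongs to the Selberg class it has an Euler product $\varphi(s)=\prod_p\varphi_p(s)$ in $\sigma>1$, with $\log\varphi_p(s)=\sum_{k\ge 1}b(p^k)p^{-ks}$, $b(p)=a(p)$, $b(p^k)\ll p^{k\theta}$ for some $\theta<1/2$, and it satisfies the Ramanujan bound $a(n)\ll_{\epsilon}n^{\epsilon}$. Let $D=D(\sigma_{\varphi},1)$ and let $\Omega=\prod_p\{z\in\mathbb{C}:|z|=1\}$ carry the product Haar measure; for $\omega\in\Omega$ write $\omega(p)$ for the $p$-th coordinate and extend $\omega$ to a completely multiplicative function on $\mathbb{N}$. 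The first task is to show that the random function $\varphi(s,\omega)=\prod_p\varphi_p(s,\omega)$ (the Euler factor with $p^{-s}$ replaced by $\omega(p)p^{-s}$; equivalently $\sum_n a(n)\omega(n)n^{-s}$) converges for almost all $\omega$ to an element of $H(D)$, and that, writing $P_{T,\varphi}$ for the distribution on $H(D)$ of $\tau\mapsto\varphi(\,\cdot\,+i\tau)$ with $\tau$ uniform on $[0,T]$, one has weak convergence $P_{T,\varphi}\Rightarrow P_{\varphi}$ as $T\to\infty$, where $P_{\varphi}$ is the law of $\varphi(\,\cdot\,,\omega)$. The analytic input here is the second moment bound $\int_0^T|\varphi(\sigma+it)|^2\,dt\ll_{\sigma}T$ for $\sigma>\sigma_{\varphi}$, together with the fact that finite truncations of the Euler product approximate $\varphi(s)$ in mean square; these are the standard ingredients of a functional limit theorem, cf.\ \cite{Bag81}\cite{Ste07}.

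The crux is to identify the support of $P_{\varphi}$ with $S=\{g\in H(D):g(s)\ne 0\ \text{for all}\ s\in D\}\cup\{0\}$, and this is where \eqref{3-3} enters through the positive density method. Combined with the Ramanujan bound, the hypothesis \eqref{3-3} forces the set $M=\{p:|a(p)|\ge\delta\}$ to have positive lower density for a suitably small $\delta>0$: if $M$ had density zero, then splitting the sum in \eqref{3-3} over $M$ and its complement and using $|a(p)|\ll_{\epsilon}p^{\epsilon}$ on $M$ would make its left-hand side $o(x/\log x)$. Since $\{\log p\}_{p\in M}$ is still linearly independent over $\mathbb{Q}$, the Kronecker--Weyl theorem (Remark~\ref{rem1-0}) lets one prescribe the values $\omega(p)$, $p\in M$, and the positive density of $M$ gives $\sum_{p\in M}|a(p)|\,p^{-\sigma}=\infty$ for every $\sigma\le 1$, while $\sum_p|a(p)|^2p^{-2\sigma}<\infty$ for $\sigma>1/2$. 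These two facts are precisely what is needed to apply Pecherski{\u\i}'s rearrangement theorem \cite{Pec73} in the relevant Hilbert (Bergman-type) space on $D$ to the family of logarithmic Euler factors $\log\varphi_p(s,\omega)$, whose leading behaviour is governed by $a(p)\,\omega(p)\,p^{-s}$. This shows that the closure of $\{\varphi(\,\cdot\,,\omega):\omega\in\Omega\}$ in $H(D)$ contains every non-vanishing holomorphic function, hence $\operatorname{supp}P_{\varphi}=S$.

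With both pieces in hand the conclusion is routine. Let $K\subset D(\sigma_{\varphi},1)$ be compact with connected complement and $f\in H_0^c(K)$, $\varepsilon>0$. Because $f$ is non-vanishing and $\mathbb{C}\setminus K$ is connected, a standard consequence of Mergelyan's theorem (Remark~\ref{rem1-01}) yields a polynomial $q$ with $g:=e^{q}$ satisfying $\sup_{s\in K}|f(s)-g(s)|<\varepsilon/2$; here $g$ is entire and non-vanishing, so its restriction to $D$ lies in $S=\operatorname{supp}P_{\varphi}$. Since $\{\psi\in H(D):\sup_{s\in K}|\psi(s)-g(s)|<\varepsilon/2\}$ is an open neighbourhood of $g|_{D}$, it has positive $P_{\varphi}$-measure, and the weak convergence $P_{T,\varphi}\Rightarrow P_{\varphi}$ together with the portmanteau theorem gives
\begin{align*}
\liminf_{T\to\infty}\frac1T\,{\rm meas}\left\{\tau\in[0,T]:\sup_{s\in K}|\varphi(s+i\tau)-g(s)|<\varepsilon/2\right\}>0.
\end{align*}
An application of the triangle inequality then yields the assertion of the theorem.

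The main obstacle is the second paragraph. The delicate point is that \eqref{3-3} is genuinely weaker than the classical information \eqref{3-1} used in the proof of Theorem~\ref{thm1-1}: one cannot directly control $\sum_p|a(p)|p^{-\sigma}$, and it is the positive density detour through the set $M$ — and the attendant modification of the rearrangement argument and of the support computation — that makes the method work. A second, structural, limitation is that every analytic ingredient above (almost sure convergence of $\varphi(s,\omega)$, mean-square approximation by truncated Euler products, and hence the whole functional limit theorem) rests on the bound $\int_0^T|\varphi(\sigma+it)|^2dt\ll_{\sigma}T$, which for a Selberg-class function of degree $d_{\varphi}$ is presently available only for $\sigma>\max\{1/2,1-d_{\varphi}^{-1}\}=\sigma_{\varphi}$: for $d_{\varphi}\le 2$ this follows from the functional equation and convexity and yields the full strip $D(1/2,1)$, but for $d_{\varphi}>2$ it is exactly the reason the conclusion cannot at present be pushed below $\sigma=1-d_{\varphi}^{-1}$.
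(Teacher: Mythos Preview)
The survey does not itself prove Theorem~\ref{thm3-2}; it only attributes the result to \cite{NagSte10} and remarks that it is ``obtained by the positive density method'', so in broad outline your sketch is aligned with what the paper indicates.

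There is, however, a real gap in your second paragraph. Your justification for the positive lower density of $M=\{p:|a(p)|\ge\delta\}$ does not go through: the Ramanujan hypothesis in the Selberg class gives only $a(n)\ll_{\varepsilon}n^{\varepsilon}$, which is \emph{not} a uniform bound on $|a(p)|$. If $M$ has density zero among the primes and you bound $|a(p)|^2\ll_{\varepsilon}p^{2\varepsilon}$ on $M$, you obtain
\[
\sum_{\substack{p\le x\\ p\in M}}|a(p)|^2\;\ll_{\varepsilon}\;x^{2\varepsilon}\cdot\#\{p\le x:p\in M\}
\;=\;o\!\left(\frac{x^{1+2\varepsilon}}{\log x}\right),
\]
which is \emph{larger} than $x/\log x$, so no contradiction with \eqref{3-3} results. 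Indeed, coefficients with $|a(p)|\sim(\log p)^{1/2}$ on a set $S$ of primes with $\#\{p\le x:p\in S\}\sim x/(\log x)^2$ and $a(p)=0$ elsewhere satisfy both \eqref{3-3} and $a(p)\ll_{\varepsilon}p^{\varepsilon}$, yet $M_\delta$ has density zero for every $\delta>0$. This is exactly the spot where the uniform Deligne bound $|\widetilde a(p)|\le 2$ is essential in \cite{LauMat01}, as the survey itself emphasises immediately after Theorem~\ref{thm3-2} (``Assumptions of the same type are required \ldots\ also in Theorem~\ref{thm3-2}''). In the Selberg-class setting you must argue differently: for instance, Cauchy--Schwarz in the shape $\bigl(\sum_{p\le x}|a(p)|^2\bigr)^2\le\bigl(\sum_{p\le x}|a(p)|\bigr)\bigl(\sum_{p\le x}|a(p)|^3\bigr)$ together with $\sum_{p\le x}|a(p)|^3\ll_{\varepsilon}x^{\varepsilon}\sum_{p\le x}|a(p)|^2$ yields $\sum_{p\le x}|a(p)|\gg_{\varepsilon}x^{1-\varepsilon}/\log x$, hence $\sum_p|a(p)|p^{-\sigma}=\infty$ for every $\sigma<1$. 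That supplies the divergence input for the rearrangement step, but you then still have to check that the specific denseness/support lemma you invoke in the Bergman space genuinely only needs this divergence (and $\sum_p|a(p)|^2p^{-2\sigma}<\infty$) rather than the stronger positive-density statement you asserted; otherwise you must supply the additional argument from \cite{NagSte10}.
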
 

\begin{rem}\label{rem3-1}
When $\varphi$ is the Riemann zeta-function, the formula \eqref{3-3} (with
$\lambda=1$) is nothing but the prime number theorem.   Therefore we may say that
the positive density method enables us to prove the universality
for zeta-functions with Euler products, provided an
asymptotic formula of the prime-number-theorem type is known.
\end{rem}

Let $h$ be a Hecke-eigen Maass form, and let $L(s,h)$ be the associated $L$-function.
The universality for $L(s,h)$ was proved by Nagoshi \cite{Nag05}\cite{Nag07}.
In the proof of Theorem \ref{thm3-1} in \cite{LauMat01}, Deligne's estimate
(Ramanujan's conjecture) $|\widetilde{a}(p)|\leq 2$ is essentially used.   
Assumptions of the same type
are required in Steuding's general result \cite{Ste03}\cite{Ste07} and also in
Theorem \ref{thm3-2}.    
Since Ramanujan's conjecture for Maass forms has not yet been proved, Nagoshi
in his first paper \cite{Nag05} assumed the validity of Ramanujan's conjecture
for $h$ to show the universality.   Then in the second paper \cite{Nag07} he
succeeded to remove this assumption, by invoking the asymptotic formula for
the fourth power mean of the coefficients due to M. Ram Murty.

\begin{thm}\label{thm3-3}
{\rm (Nagoshi \cite{Nag07})}
The universality holds for $L(s,h)$ in the region $D(1/2,1)$.
\end{thm}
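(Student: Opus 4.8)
The plan is to carry out Bagchi's probabilistic proof of universality in the same form that yields Theorem \ref{thm3-1}, that is, via the positive density method, the only new point being that the Ramanujan bound $|a(p)|\le 2$ (Deligne's theorem in the holomorphic case), unavailable for a Maass form, is replaced wherever it occurs by M.\ Ram Murty's asymptotic formula for the fourth power sum of the Hecke eigenvalues. Normalise $h$ so that its $L$-function has functional equation in the symmetric form $s\leftrightarrow 1-s$, and write $L(s,h)=\sum_{n\ge 1}a(n)n^{-s}=\prod_p\bigl(1-a(p)p^{-s}+p^{-2s}\bigr)^{-1}$, with $a(n)$ real (choosing $h$ also to be an eigenfunction of the reflection operator), so that Ramanujan's conjecture would read $|a(p)|\le 2$. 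Fix $D=D(1/2,1)$ and work in $H(D)$. The proof attaches to $L(s,h)$ the $H(D)$-valued random element $L(s,h,\omega)=\prod_p\bigl(1-a(p)\omega(p)p^{-s}+\omega(p)^2p^{-2s}\bigr)^{-1}$, where $\omega=(\omega(p))_p$ is Haar-distributed on $\prod_p\{z\in\mathbb C:|z|=1\}$. Two unconditional arithmetic inputs are at our disposal: the Rankin--Selberg estimate $\sum_{p\le x}|a(p)|^2=(1+o(1))x/\log x$, the Maass analogue of \eqref{3-2}; and --- this is Nagoshi's observation --- the bound $\sum_{p\le x}|a(p)|^4\ll x/\log x$ coming from M.\ Ram Murty's asymptotic formula (itself a consequence of the analytic properties of $L(s,\mathrm{sym}^2 h)$ and its Rankin--Selberg square). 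By Chebyshev's inequality the latter gives $\#\{p\le x:|a(p)|>V\}\ll x/(V^4\log x)$, which is precisely the quantitative sparsity of large eigenvalues needed to replace $|a(p)|\le 2$; together with a known subconvexity-type bound $|a(p)|\ll p^\theta$ with $\theta<1/2$ it also guarantees that every Euler factor is invertible on $D$.

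First I would establish the functional limit theorem: the measures $P_T(A)=\frac1T\,{\rm meas}\{\tau\in[0,T]:L(\cdot+i\tau,h)\in A\}$ converge weakly, as $T\to\infty$, to the law $P$ of $L(\cdot,h,\omega)$. The steps are standard. For a truncation over $p\le N$ the convergence comes from the Kronecker--Weyl theorem (Remark \ref{rem1-0}), since $\{\log p\}_p$ is linearly independent over $\mathbb Q$ and hence $(p^{-i\tau})_{p\le N}$ equidistributes on the torus. One then shows that $L(s,h)$ is approximated in mean square on compact subsets of $D$ by such truncations; this rests on the bound $\int_0^T|L(\sigma+it,h)|^2\,dt\ll T$ for each fixed $\sigma>1/2$ (from the approximate functional equation and $\sum_{n\le x}|a(n)|^2\ll x$) and on estimates for the tails of Euler products, in which the fourth-moment bound stands in for Deligne's estimate. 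The same approximation is then proved for $L(s,h,\omega)$ in mean over $\omega$, again using the second and fourth moments of $a(p)$. Combining these with the usual tightness and Prokhorov argument gives $P_T\Rightarrow P$.

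Next I would compute the support of $P$ by the positive density method. Put $\mathcal P=\{p:\delta\le|a(p)|\le\Delta\}$; taking $\delta$ small and $\Delta$ large, the lower bound $\sum_{p\le x}|a(p)|^2\gg x/\log x$ together with the Cauchy--Schwarz bound $\sum_{p\le x,\,|a(p)|>\Delta}|a(p)|^2\ll x/(\Delta^2\log x)$ (which uses the fourth moment) forces $\mathcal P$ to have positive lower density, whence $\sum_{p\in\mathcal P}|a(p)|^2p^{-1}=\infty$. Working with $\log L(s,h,\omega)$, whose principal part is $\sum_p a(p)\omega(p)p^{-s}$, and restricting the randomness to $p\in\mathcal P$, the divergence $\sum_{p\in\mathcal P}|a(p)|^2p^{-2\sigma}=\infty$ for $\sigma$ down to the left edge $\sigma=1/2$ of $D$ permits a Hilbert-space rearrangement argument --- Pecherski{\u\i}'s theorem in Voronin's proof, Bagchi's Banach-space variant otherwise --- to conclude that the support of $P$ contains every zero-free $g\in H(D)$, together with $g\equiv 0$. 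Finally, given $K$ and $f\in H_0^c(K)$ as in the statement: since the complement of $K$ is connected, $\log f$ is defined and single-valued on $K$, and by Mergelyan's theorem (Remark \ref{rem1-01}) it is uniformly approximated on $K$ by a polynomial $q$; then $e^{q}\in H(D)$ is zero-free with $\sup_K|e^{q}-f|$ small, so $e^q$ lies in the support of $P$. Since $\{g\in H(D):\sup_{s\in K}|g(s)-f(s)|<\varepsilon\}$ is open and meets the support, weak convergence gives $\liminf_{T\to\infty}\frac1T\,{\rm meas}\{\tau\in[0,T]:\sup_{s\in K}|L(s+i\tau,h)-f(s)|<\varepsilon\}>0$, which is \eqref{1-3} for $\varphi=L(\cdot,h)$ on $D(1/2,1)$.

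The main obstacle is exactly the place where Maass forms differ from holomorphic forms: the mean-square approximation of $L(s,h)$ (and of the random model) by finite Euler products near the line $\sigma=1/2$, and the positive density of $\mathcal P$, both reduce to controlling sums $\sum_{p\le x}|a(p)|^k p^{-k\sigma}$ and Euler-product tails that, without the Ramanujan bound, cannot be handled term by term. It is precisely M.\ Ram Murty's fourth-moment asymptotic that supplies enough sparsity of the large values of $a(p)$ to push the Laurin{\v c}ikas--Matsumoto machinery of Theorem \ref{thm3-1} through unconditionally for $L(s,h)$.
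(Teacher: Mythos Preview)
Your proposal is correct and matches the approach the paper attributes to Nagoshi: the Bagchi--Laurin{\v c}ikas--Matsumoto positive density method, with the single new ingredient being the replacement of the Ramanujan bound (unavailable for Maass forms) by M.\ Ram Murty's asymptotic for the fourth power mean of the Hecke eigenvalues. Since this is a survey paper, it does not reproduce the proof but only sketches precisely this strategy, which you have filled in accurately.
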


Another important class of zeta-functions which have Euler products is the class of
Selberg zeta-functions.   In this case, instead of the prime-number-theorem type 
of results, the prime geodesic theorem plays an important role.
Let 
$$\mathcal{D}=\{d\in\mathbb{N}\;|\;d\equiv 0\; {\rm or}\; 1 \;{\rm(mod\; 4)},
d \;{\rm \;is\; not\; a\; square}\}.$$ 
For each $d\in\mathcal{D}$, let $h^+(d)$ be the number of 
inequivalent primitive quadratic forms of discriminant $d$,
and $\varepsilon(d)=(u(d)+v(d)\sqrt{d})/2$, where $(u(d),v(d))$ is the
fundamental solution of the Pell equation $u^2-v^2d=4$. 
Then, the prime geodesic theorem for $SL(2,\mathbb{Z})$ implies
\begin{align}\label{3-4}                                                                
\sum_{d\in\mathcal{D}\atop \varepsilon(d)^2\leq x}h^+(d)=\int_0^x\frac{dt}{\log t}
+O(x^{\alpha})
\end{align}
with a certain $\alpha<1$.   

\begin{thm}\label{thm3-4}
{\rm (Drungilas, Garunk{\v s}tis and Ka{\v c}{\.e}nas \cite{DruGarKac13})}
Let $Z(s)$ be the Selberg zeta-function attached to $SL(2,\mathbb{Z})$.
If \eqref{3-4} holds, then the universality holds for $Z(s)$ in the region
$D(1/2+\alpha/2,1)$.
\end{thm}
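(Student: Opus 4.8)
The plan is to follow the positive density method in the form of Theorem \ref{thm3-2}, adapting it to the Selberg zeta-function $Z(s)$ attached to $SL(2,\mathbb{Z})$. Recall that $Z(s)$ has an Euler-type product over primitive hyperbolic conjugacy classes (equivalently, over the norms $\varepsilon(d)^2$ for $d\in\mathcal{D}$), convergent for $\Re s>1$, and that it extends meromorphically with its nontrivial zeros lying on the critical line $\Re s=1/2$ (by the spectral interpretation). The first step is to set up the probabilistic framework à la Bagchi: introduce the infinite-dimensional torus indexed by the primitive geodesics, define the random Euler product $Z(s,\omega)$, and establish a functional limit theorem identifying the limit measure of $\tau\mapsto Z(s+i\tau)$ on $H(D(1/2+\alpha/2,1))$ with the distribution of $Z(s,\omega)$. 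The support of that limit measure must be shown to be all of $H_0^c(K)$ for compact $K\subset D(1/2+\alpha/2,1)$ with connected complement; this is where the density input \eqref{3-4} enters.

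Second, I would prove the mean-value / density statement that replaces the prime number theorem in Remark \ref{rem3-1}. Here the arithmetic of the geodesic side is slightly different from the prime side: the ``primes'' are the $\varepsilon(d)^2$, and their multiplicities are the class numbers $h^+(d)$. The prime geodesic theorem \eqref{3-4} gives $\sum_{\varepsilon(d)^2\le x} h^+(d) = \mathrm{li}(x)+O(x^\alpha)$, which plays exactly the role of \eqref{3-3}: it says the counting function of geodesic ``primes weighted by class number'' is asymptotic to $x/\log x$, with a power-saving error controlled by $\alpha$. From this one deduces, by partial summation, the convergence of the relevant Dirichlet series of logarithmic derivatives in the half-plane $\Re s>1/2+\alpha/2$, and—crucially—a second-moment bound for $\log Z(\sigma+it)$ (or for a truncated Euler product approximating it) on vertical lines with $\sigma>1/2+\alpha/2$. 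The exponent $\alpha/2$ in $1/2+\alpha/2$ appears precisely because the error term $O(x^\alpha)$ in \eqref{3-4}, inserted into the standard contour/mean-value argument, forces the region of validity to shrink by half the error exponent, just as the $d_F^{-1}$ phenomenon for Dedekind zeta-functions described after Theorem \ref{thm1-1}.

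Third, with the limit theorem and the support computation in hand, the universality follows by the now-standard argument: given $f\in H_0^c(K)$ and $\varepsilon>0$, Mergelyan's theorem (Remark \ref{rem1-01}) lets us take $\log f$ to be approximated by a polynomial on $K$, the Kronecker–Weyl theorem (Remark \ref{rem1-0}), applied to the logarithms of the geodesic norms $\log\varepsilon(d)^2$ after checking their $\mathbb{Q}$-linear independence, lets us realize the target by a shift of a finite truncation, and the positive-density mean-value estimate controls the tail. Conclude that the set of $\tau$ with $\sup_{s\in K}|Z(s+i\tau)-f(s)|<\varepsilon$ has positive lower density.

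The main obstacle will be the support/linear-independence step rather than the analytic mean-value step. The geodesic norms $\varepsilon(d)^2 = \big((u(d)+v(d)\sqrt d)/2\big)^2$ are fundamental units of real quadratic orders, and one must verify that $\{\log\varepsilon(d)^2 : d\in\mathcal{D}\}$ contains a $\mathbb{Q}$-linearly independent subset rich enough that the associated random Euler product has full support in $H_0^c(K)$; unlike the prime case, these numbers are algebraic and their multiplicative relations are governed by subtle arithmetic of quadratic fields, so ruling out degeneracies (or working around them, e.g. by passing to a suitable independent subfamily and absorbing the rest into the error) requires care. A secondary technical point is that $Z(s)$, unlike a Selberg-class $L$-function, has a zero at $s=1$ and trivial zeros off to the left coming from the spectrum; one must check these do not intrude into $D(1/2+\alpha/2,1)$ and that the meromorphic continuation is well enough understood there for the contour integration underlying the mean-value estimate.
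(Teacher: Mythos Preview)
The paper is a survey and does not prove Theorem~\ref{thm3-4}; it merely states the result with attribution to Drungilas, Garunk{\v s}tis and Ka{\v c}{\.e}nas \cite{DruGarKac13} and surrounds it with one sentence of context (that the prime geodesic theorem plays the role of the prime number theorem, cf.\ Remark~\ref{rem3-1}). There is therefore no proof in the paper to compare your proposal against.

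That said, your outline is consistent with the high-level strategy the survey describes for zeta-functions with Euler products: Bagchi's probabilistic framework plus a positive-density argument driven by a prime-number-theorem analogue, here the prime geodesic theorem \eqref{3-4}. You also correctly flag the two genuine obstacles that distinguish this case from the Selberg-class setting of Theorem~\ref{thm3-2}: first, the $\mathbb{Q}$-linear independence of the set $\{\log\varepsilon(d)^2\}$ is not automatic (these are logarithms of algebraic units, not of rational primes) and must be established separately; second, $Z(s)$ is not a Dirichlet series of the usual type and does not belong to the Selberg class, so the analytic input (mean-square on vertical lines, approximation by truncated products) cannot be quoted from Theorem~\ref{thm3-2} but must be redone from scratch using \eqref{3-4}. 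Both points are indeed handled in \cite{DruGarKac13}, and your identification of the linear-independence step as the crux is accurate. Your heuristic for the appearance of $1/2+\alpha/2$ is in the right spirit but vague; in the actual argument the exponent emerges from the range of $\sigma$ for which the error term $O(x^{\alpha})$ in \eqref{3-4}, after partial summation, yields a convergent second-moment estimate for the truncated logarithm of $Z(s)$.
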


As for the value of $\alpha$, it is known that one can take
$\alpha=71/102+\varepsilon$ for any $\varepsilon>0$ (Cai \cite{Cai02}).
It is conjectured that one could take $\alpha=1/2+\varepsilon$.
If the conjecture is true, then Theorem \ref{thm3-4} implies that $Z(s)$
has the universality property in $D(3/4,1)$.   The paper \cite{DruGarKac13}
includes a discussion which suggests that $D(3/4,1)$ is the widest possible
region where the universality for $Z(s)$ holds.

\section{The joint universality for zeta and $L$-functions with Euler 
products}\label{sec-4}

The results presented in the previous sections give approximation
properties of some single zeta or $L$-function.    Here we discuss
simultaneous approximations by several zeta or $L$-functions.

Let $K_1,\ldots,K_r$ be compact subsets of $D(a,b)$ with connected
complements, and $f_j\in H_0^c(K_j)$ ($1\leq j\leq r$).
If Dirichlet series 
$\varphi_1(s),\ldots,\varphi_r(s)$ satisfy
\begin{align}\label{4-1} 
\liminf_{T\to\infty}\frac{1}{T}{\rm meas}\Bigl\{\tau\in[0,T]\;\left|\;
\sup_{s\in K_j}|\varphi_j(s+i\tau)-f_j(s)|<\varepsilon\right.\Bigr.\\
\Bigl.(1\leq j\leq r)\Bigr\}>0\notag
\end{align}
for any $\varepsilon>0$, we call the {\it joint universality} holds for
$\varphi_1(s),\ldots,\varphi_r(s)$ in the region $D(a,b)$.
The joint universality for Dirichlet $L$-functions was already obtained in the
first decade by Voronin \cite{Vor75b}\cite{Vor77}\cite{Vor79}, 
Gonek \cite{Gon79}, and
Bagchi \cite{Bag81}\cite{Bag82}, independently of each other:

\begin{thm}\label{thm4-1}
{\rm (Voronin, Gonek, Bagchi)}
Let $\chi_1,\ldots,\chi_r$ be pairwise non-equivalent Dirichlet
characters, and $L(s,\chi_1),\ldots,L(s,\chi_r)$ the
corresponding Dirichlet $L$-functions.    
Then the joint universality holds for $L(s,\chi_1),\ldots,L(s,\chi_r)$
in the region $D(1/2,1)$.
\end{thm}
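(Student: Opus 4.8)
The plan is to follow Bagchi's probabilistic strategy, adapting the single-function argument behind Theorem \ref{thm1-1} to the joint setting. First I would set up a functional limit theorem on the product space $H(D(1/2,1))^r$. For each prime $p$ introduce an independent random variable $\omega(p)$ uniformly distributed on the unit circle, set $\omega(n)=\prod_{p^a\|n}\omega(p)^a$ multiplicatively, and define the random Euler products
\begin{align}\label{4-rand}
L(s,\chi_j,\omega)=\prod_p\left(1-\frac{\chi_j(p)\omega(p)}{p^s}\right)^{-1}\qquad(1\le j\le r).
\end{align}
One shows that the $H(D(1/2,1))^r$-valued random element $\underline{L}(\cdot,\omega)=(L(\cdot,\chi_1,\omega),\ldots,L(\cdot,\chi_r,\omega))$ is almost surely well defined, and that the measures
\[
P_T(A)=\frac{1}{T}{\rm meas}\left\{\tau\in[0,T]\;\left|\;(L(s+i\tau,\chi_1),\ldots,L(s+i\tau,\chi_r))\in A\right.\right\}
\]
converge weakly as $T\to\infty$ to the distribution of $\underline{L}(\cdot,\omega)$. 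This step uses the linear independence of $\{\log p\}_p$ over $\mathbb{Q}$ together with the Kronecker--Weyl theorem (Remark \ref{rem1-0}) to identify the limit of the finite-dimensional ``truncated'' distributions, plus the mean-value approximation of $L(s,\chi_j)$ by finite Euler truncations in $D(1/2,1)$ to pass from truncations to the full functions.

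The second step is to identify the support of the limiting measure. Here the key point, and the one place where the pairwise non-equivalence of the characters enters essentially, is that the support of the distribution of $\underline{L}(\cdot,\omega)$ is the \emph{entire} set $S^r$, where $S=\{g\in H(D(1/2,1))\;|\;g=e^h\ \text{for some }h\in H(D(1/2,1))\}\cup\{0\}$. The inclusion of the support in $S^r$ is clear from the Euler product; the reverse inclusion is what requires work. One expands $\log L(s,\chi_j,\omega)=\sum_p\chi_j(p)\omega(p)p^{-s}+(\text{absolutely convergent remainder})$, so that modulo a harmless holomorphic correction the problem reduces to: the $H(D(1/2,1))^r$-valued series $\sum_p(\chi_1(p)p^{-s},\ldots,\chi_r(p)p^{-s})\omega(p)$ has full support. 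By a Pecherski\u\i-type rearrangement theorem (as in Voronin's original argument) it suffices to check that for each fixed $s_0$ the relevant ``conditionally convergent in all directions'' condition holds, and that the only obstruction would be a linear relation $\sum_{j}c_j\chi_j(p)=0$ for all $p$ with constants not all zero. The orthogonality relations for Dirichlet characters, which apply precisely because the $\chi_j$ are pairwise non-equivalent, rule this out; a standard density-of-primes argument (using \eqref{3-1}, i.e. that $\sum_p p^{-1}$ diverges, so that one cannot approximate by finitely many primes) then yields that the support is all of $S^r$.

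The final step is routine: given $f_j\in H_0^c(K_j)$, by Mergelyan's theorem (Remark \ref{rem1-01}) applied to $\log f_j$ on $K_j$ we may replace each $f_j$ by a polynomial exponential $e^{p_j(s)}$, hence by an element of $S$ restricted to $K_j$; since the support of the limit measure is $S^r$, the open set
\[
\left\{\underline{g}\in H(D(1/2,1))^r\;\left|\;\sup_{s\in K_j}|g_j(s)-f_j(s)|<\varepsilon\ (1\le j\le r)\right.\right\}
\]
has positive measure under the limit distribution, and weak convergence plus the portmanteau theorem gives \eqref{4-1}. The main obstacle is unquestionably the support computation in the second step: verifying that the Pecherski\u\i/Kronecker--Weyl machinery still produces a \emph{joint} full support, which is exactly where one must exploit that no two of the $\chi_j$ are equivalent — if, say, $\chi_1=\chi_2$ then $L(s,\chi_1)=L(s,\chi_2)$ identically and joint universality is obviously false, so the hypothesis is sharp and must be used in an essential, not cosmetic, way.
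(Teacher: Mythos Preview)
Your proposal is correct and follows Bagchi's probabilistic route, which is one of the three approaches the paper attributes the theorem to. The paper is a survey and does not give a proof of Theorem~\ref{thm4-1}; its only comment on the argument is that the required ``independence'' of the $L(s,\chi_j)$ is embodied by the orthogonality relation of Dirichlet characters, which is exactly the ingredient you isolate as decisive in the support computation. So your sketch is fully consistent with, and considerably more detailed than, what the paper offers.
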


To prove such a theorem of simultaneous approximations, it is obviously necessary
that the behaviour of $L$-functions appearing in the theorem should be
``independent'' of each other.   In the situation of Theorem \ref{thm4-1}, this
is embodied by the orthogonality relation of Dirichlet characters, which is
essentially used in the proof.

The joint universality for Dedekind zeta-functions was studied by Voronin
\cite{Vor77}\cite{Vor79}.   Bauer's work \cite{Bau03} mentioned in the preceding 
section actually proves a joint universality theorem on Artin $L$-functions,
in the region $D(1-(2d_F)^{-1},1)$.   Lee \cite{Lee12b} extended the region to
$D(1/2,1)$ under the assumption of a certain density estimate.

Let $g_j$ ($1\leq j\leq r$) be multiplicative arithmetic functions.
Laurin{\v c}ikas \cite{Lau86} considered the joint universality of the associated 
Dirichlet series
$\sum_{n=1}^{\infty}g_j(n)n^{-s}$ ($1\leq j\leq r$).   In this case, the
``independence'' condition is given by the following matrix condition.
Let $P_1,\ldots,P_k$ ($k\geq r$) be certain sets of prime numbers, 
with the condition that $\sum_{p\leq x,p\in P_i}p^{-1}$ satisfies a good
asymptotic formula, and assume that 
$g_j(n)$ is a constant $g_{jl}$ on the set $P_l$ ($1\leq j\leq r$, $1\leq l\leq k$).
Laurin{\v c}ikas \cite{Lau86} 
proved a joint universality theorem under the condition that
the rank of the matrix $(g_{jl})_{1\leq j\leq r,1\leq l\leq k}$ is equal to $r$.

Laurin{\v c}ikas frequently used various matrix conditions to obtain joint
universality theorems.   A joint universality theorem on Matsumoto
zeta-functions under a certain matrix condition was proved in \cite{Lau98b}.
A joint universality for automorphic $L$-functions under a certain matrix
condition was discussed in \cite{LauMat02}.

A matrix condition naturally appears in the joint universality theory of
periodic zeta-functions (see \cite{LauMacSia07}\cite{LauMac09}).
Let $\mathfrak{A}_j=\{a_{jn}\}_{n=1}^{\infty}$  be a multiplicative periodic 
sequence (whose least period we denote by $k_j$) of complex numbers, and 
$\zeta(s,\mathfrak{A}_j)$ the associated periodic zeta-function ($1\leq j\leq r$).
Let $k$ be the least common multiple of $k_1,\ldots,k_r$.   Define the matrix
$A=(a_{jl})_{j,l}$, where $1\leq j\leq r$ and $1\leq l\leq k$, $(l,k)=1$.
Then Laurin{\v c}ikas and Macaitien{\.e} \cite{LauMac09}
proved the joint universality for 
$\zeta(s,\mathfrak{A}_1),\ldots,\zeta(s,\mathfrak{A}_r)$ in the region
$D(1/2,1)$, if we assume ${\rm rank}(A)=r$ and a technical condition
similar to \eqref{3-0}.

Using the positive density method, it is possible to prove a joint universality
theorem for twisted automorphic $L$-functions.   Let
$g(z)=\sum_{n=1}^{\infty}a(n)e^{2\pi inz}$ be a holomorphic normalized
Hecke-eigen cusp form, 
$\chi_1,\ldots,\chi_r$ be pairwise non-equivalent Dirichlet
characters, and $L(s,g,\chi_j)=\sum_{n=1}^{\infty}a(n)\chi_j(n)n^{-s}$ the
associated $\chi_j$-twisted $L$-function.   

\begin{thm}\label{thm4-2}
{\rm (Laurin{\v c}ikas and Matsumoto \cite{LauMat04})}
The joint universality holds for
$L(s,g,\chi_1),\ldots,L(s,g,\chi_r)$ in the region $D(\kappa/2,(\kappa+1)/2)$.
\end{thm}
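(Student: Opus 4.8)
The plan is to adapt Bagchi's probabilistic method, combined with the positive density method, to the several-functions setting. First I would establish a joint functional limit theorem: writing $\underline{L}(s) = (L(s,g,\chi_1),\ldots,L(s,g,\chi_r))$, I would show that the $H(D(\kappa/2,(\kappa+1)/2))^r$-valued random element obtained from the shifts $\tau \mapsto \underline{L}(s+i\tau)$ converges weakly, as $T\to\infty$, to a limit random element $\underline{L}(s,\omega)$ defined on the probability space $\Omega = \prod_p \gamma_p$ (the infinite product of unit circles with Haar measure). The coordinates of this limit are the random Euler products $L(s,g,\chi_j,\omega) = \prod_p \prod_{i=1}^{2}(1 - \alpha_i(p)\chi_j(p)\omega(p)p^{-s})^{-1}$, where $\alpha_1(p)\alpha_2(p) = p^{\kappa-1}$ and $\alpha_1(p)+\alpha_2(p) = a(p)$ are the Satake parameters. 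The twisting characters only modify the argument of each factor at the prime $p$ by $\arg\chi_j(p)$, but crucially the \emph{same} random variable $\omega(p)$ enters all $r$ components; the required ``independence'' must come entirely from the characters.

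Next I would prove the key support/denseness statement: the support of the limit measure is all of $H_0^c$-type tuples, i.e. the set of $r$-tuples $(f_1,\ldots,f_r)$ of non-vanishing holomorphic functions is contained in the support of the distribution of $\underline{L}(s,\omega)$. This reduces, via a logarithm and Mergelyan's theorem (Remark \ref{rem1-01}), to showing that finite sums $\sum_{p\le N}\sum_{i}\log(1-\alpha_i(p)\chi_j(p)\omega(p)p^{-s})^{-1}$ can simultaneously approximate arbitrary target tuples $g_j(s)$ as $\omega$ ranges over $\Omega$. The mechanism is a rearrangement argument in the Hilbert space $H^2(K_j)$ over each $j$, controlled jointly: the vectors coming from distinct primes are conditionally ``free'' because the $\chi_j$ are pairwise non-equivalent, so by orthogonality of Dirichlet characters the prime contributions to the $r$ components are not proportional, and one can steer all $r$ coordinates independently. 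The positive density method enters exactly here, as in Theorem \ref{thm3-1}: since the prime number theorem type formula $\sum_{p\le x}|a(p)|p^{-1}$ is unavailable, one instead uses $\sum_{p\le x}|\widetilde a(p)|^2 = x/\log x\,(1+o(1))$ to select a positive-density subset $\mathcal{P}$ of primes on which $|\widetilde a(p)|$ is bounded below, and carries out the rearrangement using only primes in $\mathcal{P}$; Deligne's bound $|\widetilde a(p)|\le 2$ controls the tail.

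Finally I would combine these two ingredients in the standard way. Given compact sets $K_j$ with connected complements and $f_j \in H_0^c(K_j)$, the support statement guarantees that the event $\{\omega : \sup_{s\in K_j}|L(s,g,\chi_j,\omega) - f_j(s)| < \varepsilon/2 \ \ (1\le j\le r)\}$ has positive $\Omega$-measure; then the joint functional limit theorem, together with a standard argument that the approximation event is (essentially) a continuity set of the limit measure, transfers this to a positive lower density for the set of real $\tau$ with $\sup_{s\in K_j}|L(s+i\tau,g,\chi_j) - f_j(s)| < \varepsilon$ for all $j$, which is precisely \eqref{4-1}.

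The main obstacle is the joint denseness/support step, because the twisting characters do \emph{not} act on independent random variables: all components share the single family $\{\omega(p)\}_p$, so one cannot argue component-by-component. One must show that the combined ``moment vectors'' $(\chi_1(p)\widetilde a(p)p^{-s},\ldots,\chi_r(p)\widetilde a(p)p^{-s})$, ranging over primes $p\in\mathcal{P}$, fill out $H^2(K_1)\oplus\cdots\oplus H^2(K_r)$ densely after the rearrangement — and this is where the non-equivalence of the $\chi_j$, in the form of a quantitative orthogonality estimate $\sum_{p\le x}\chi_j(p)\overline{\chi_l(p)}\,|\widetilde a(p)|^2 = o(x/\log x)$ for $j\ne l$ (again supplied by \eqref{3-2} plus character orthogonality, using that $g$ is self-dual so $|a(p)|^2$ behaves regularly), becomes essential. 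Verifying that this estimate survives the restriction to the positive-density set $\mathcal{P}$, and that the resulting vectors remain jointly non-degenerate, is the delicate technical heart of the argument; the functional limit theorem and the final transfer are then comparatively routine elaborations of Bagchi's framework.
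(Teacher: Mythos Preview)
Your overall architecture is correct and matches the approach of Laurin{\v c}ikas--Matsumoto \cite{LauMat04}: Bagchi's probabilistic scheme (joint functional limit theorem, identification of the support of the limit measure, transfer to positive lower density) combined with the positive density method to handle the Fourier coefficients $a(p)$. The identification of the joint denseness step as the crux, and of the pairwise non-equivalence of $\chi_1,\ldots,\chi_r$ as the source of ``independence'' among the $r$ components sharing the single family $\{\omega(p)\}_p$, is exactly right.

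There is, however, one genuine inaccuracy. You claim the needed orthogonality estimate
\[
\sum_{p\le x}\chi_j(p)\overline{\chi_l(p)}\,|\widetilde a(p)|^2 = o\!\left(\frac{x}{\log x}\right)\qquad(j\neq l)
\]
is ``supplied by \eqref{3-2} plus character orthogonality''. It is not: \eqref{3-2} is the unrestricted sum $\sum_{p\le x}|\widetilde a(p)|^2$, and character orthogonality by itself gives no information about how $|\widetilde a(p)|^2$ is distributed over residue classes. What is actually required --- and this is precisely the point the paper singles out immediately after the statement of the theorem --- is the prime number theorem for $|\widetilde a(p)|^2$ in arithmetic progressions, namely \eqref{4-2}. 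From \eqref{4-2} the displayed cancellation follows at once by the standard passage from progressions to character sums, and then your positive-density selection of primes and the rearrangement argument go through. So the plan is sound, but the arithmetic input you must invoke is \eqref{4-2}, a genuine strengthening of \eqref{3-2} (coming from analytic properties of the twisted Rankin--Selberg $L$-functions), not \eqref{3-2} itself.
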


To prove this result, we need a prime number theorem for $a(p)$ in arithmetic 
progressions, that is
\begin{align}\label{4-2}
\sum_{p\leq x\atop p\equiv h ({\rm mod}\; q)}|\widetilde{a}(p)|^2
=\frac{1}{\varphi(q)}\frac{x}{\log x}(1+o(1)),
\end{align}
where $(h,q)=1$ and $\varphi(q)$ is Euler's totient function.

J. Steuding \cite[Theorem 12.8]{Ste07} generalized Theorem \ref{thm4-2}
to the Steuding class $\widetilde{S}$.
A joint version of \cite{LauSle02} was given by 
{\v S}le{\v z}evi{\v c}ien{\.e} \cite{Sle02}.
A joint universality theorem on $L$-functions of elliptic curves, under a certain
matrix condition, was given by Garbaliauskien{\.e}, Ka{\v c}inskait{\.e} and 
Laurin{\v c}ikas \cite{GarbKacLau04}.

Let $\varphi_j(s)=\sum_{n=1}^{\infty}a_j(n)n^{-s}$ ($j=1,2$) be elements of 
the Selberg class.   The following orthogonality conjecture of Selberg \cite{Sel92} 
is well known: if $\varphi_1(s)$, $\varphi_2(s)$ are primitive, then
\begin{align}\label{4-3}
\sum_{p\leq x}\frac{a_1(p)\overline{a_2(p)}}{p}=
\left\{
\begin{array}{ll}
\log\log x+O(1) & {\rm if}\quad \varphi_1=\varphi_2,\\
O(1)  &  {\rm otherwise}.
\end{array}
\right.
\end{align}  

Inspired by this conjecture, J. Steuding \cite[Section 12.5]{Ste07} proposed:

\begin{conj}\label{conj4-1}
{\rm (J. Steuding)}
Any finite collection of distinct primitive functions in the Selberg
class is jointly universal
\footnote{
J. Steuding also mentioned the expectation that any two functions 
$\varphi_1,\varphi_2$ in the Selberg class would be jointly
universal if and only if
$
\sum_{p\leq x}\frac{a_1(p)\overline{a_2(p)}}{p}=O(1).
$
However H. Nagoshi, and then H. Mishou, pointed out that there are counter examples
to this statement.}.
\end{conj}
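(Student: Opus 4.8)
The natural strategy is to run Bagchi's probabilistic method \cite{Bag81}, in the refined form developed for the Selberg and Steuding classes, for the whole tuple $(\varphi_1,\ldots,\varphi_r)$ simultaneously, with Selberg's orthogonality relation \eqref{4-3} taking over the role that the orthogonality of Dirichlet characters plays in the proof of Theorem \ref{thm4-1}. Fix the distinct primitive functions $\varphi_j(s)=\sum_n a_j(n)n^{-s}$, let $d_j$ be the degree of $\varphi_j$, put $\sigma_{\varphi_j}=\max\{1/2,1-d_j^{-1}\}$ as in Theorem \ref{thm3-2}, and set $D=D(\sigma,1)$ with $\sigma=\max_j\sigma_{\varphi_j}$. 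It must be said at once that an argument along these lines can only be conditional: it presupposes \eqref{4-3} itself, which is Selberg's orthogonality conjecture, and it will in fact require a genuinely \emph{joint and effective} strengthening of it; moreover, exactly as in Theorem \ref{thm3-2} and Remark \ref{rem3-1}, a prime-number-theorem type asymptotic for $\sum_{p\le x}|a_j(p)|^2$ — rather than merely its $p^{-1}$-weighted form, which is what \eqref{4-3} supplies — feeds the mean-value estimates, so that unconditionally one may still have to assume more about individual elements of the class.

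\textbf{Step 1: a joint functional limit theorem.} Let $\Omega=\prod_p\gamma_p$, each $\gamma_p$ a copy of the unit circle, be the infinite torus equipped with its Haar probability measure, and write $\omega(p)$ for the $p$-th coordinate. Using the local roots $\alpha_{j,k}(p)$ of $\varphi_j$ (at most $d_j$ of them for each $p$) together with the Ramanujan bound built into the Selberg axioms, one attaches to almost every $\omega$ the random Euler products
\[
\varphi_j(s,\omega)=\prod_p\prod_{k}\bigl(1-\alpha_{j,k}(p)\,\omega(p)\,p^{-s}\bigr)^{-1}\in H(D)\qquad(1\le j\le r),
\]
and proves that, as $T\to\infty$,
\[
\frac1T\,{\rm meas}\bigl\{\tau\in[0,T]:(\varphi_1(\cdot+i\tau),\ldots,\varphi_r(\cdot+i\tau))\in A\bigr\}
\]
converges weakly to the law on $H(D)^r$ of $\omega\mapsto(\varphi_1(\cdot,\omega),\ldots,\varphi_r(\cdot,\omega))$. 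For each coordinate separately this is the functional limit theorem underlying the results of Section \ref{sec-3}, proved via the ergodicity of the one-parameter flow on $\Omega$ sending $(\omega(p))_p$ to $(p^{-i\tau}\omega(p))_p$, which rests on the $\mathbb Q$-linear independence of $\{\log p\}_p$; the joint statement requires only the corresponding finite-dimensional convergence and tightness for the $H(D)^r$-valued process, the essential point being that all $r$ components are driven by the \emph{same} symbols $\omega(p)$ at each prime.

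\textbf{Step 2: the support computation — the main obstacle.} The theorem is then equivalent to the assertion that the support of this limit law is exactly the $r$-fold product $S\times\cdots\times S$, where $S\subset H(D)$ is the set of zero-free holomorphic functions together with the zero function: in other words, there is no coupling among the $r$ coordinates beyond the individual zero-free constraint. Passing to logarithms, $\log\varphi_j(s,\omega)$ coincides with $\sum_p a_j(p)\omega(p)p^{-s}$ up to a term harmless for $s\in D$, so by a Pecherski{\u\i}-type rearrangement theorem \cite{Pec73} the question reduces to whether the closed linear span of the $H(D)^r$-valued functions
\[
p\longmapsto\bigl(a_1(p)p^{-s},\ldots,a_r(p)p^{-s}\bigr)
\]
is all of $H(D)^r$; dually, whether compactly supported complex measures $\mu_1,\ldots,\mu_r$ on $D$ satisfying $\sum_{j=1}^r a_j(p)\widehat\mu_j(p)=0$ for all but finitely many $p$ (with $\widehat\mu_j(p)=\int p^{-s}\,d\mu_j(s)$) must all vanish. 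For $r=1$ this is the positive-density completeness lemma: \eqref{4-3} with $\varphi_1=\varphi_2$ gives $\sum_{p\le x}|a_1(p)|^2p^{-1}=\log\log x+O(1)$, so the primes with $a_1(p)\ne0$ have divergent reciprocal sum and $\{p^{-s}\}$ over that set is complete in $H(D)$. For $r\ge2$ one needs distinctness and primitivity: if some $\varphi_j$ were a power of another, the vectors above would collapse onto a diagonal and the span would be far from dense — this is the mechanism behind the counterexamples of Nagoshi and Mishou mentioned in the footnote to Conjecture \ref{conj4-1}. For Dirichlet $L$-functions (Theorem \ref{thm4-1}) one decomposes the relation by residue classes and uses the \emph{exact} orthogonality of characters to isolate, for each class, a single ``pure'' completeness relation to which the $r=1$ lemma applies, the character table being invertible precisely because the $\chi_j$ are distinct. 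In the general Selberg-class setting there is no such residue-class decomposition, and one has only the \emph{asymptotic} relation \eqref{4-3}, with an $O(1)$ error. Producing the analogous separation — extracting from \eqref{4-3} for the pairs $(\varphi_i,\varphi_j)$ enough to force $\mu_1=\cdots=\mu_r=0$, uniformly over which of the $\varphi_i$ might a priori coincide — is the step I expect to be genuinely hard, and it appears to be the essential reason the conjecture is still open even when \eqref{4-3} is granted.

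\textbf{Step 3: conclusion.} Once Steps 1 and 2 are available one finishes exactly as in Bagchi's proof. Given compact sets $K_1,\ldots,K_r\subset D$ with connected complements and $f_j\in H_0^c(K_j)$, enlarge each $K_j$ slightly to a compact subset of $D$ still with connected complement and apply Mergelyan's theorem (Remark \ref{rem1-01}) to obtain polynomials $g_j$ with $\sup_{K_j}|f_j-\exp g_j|<\varepsilon/2$; since $(\exp g_1,\ldots,\exp g_r)$ lies in the support $S\times\cdots\times S$ of the limit law, the set of tuples in $H(D)^r$ approximating it to within $\varepsilon/2$ uniformly on the $K_j$ is open and has positive measure, and the portmanteau theorem applied to the weak convergence of Step 1 turns this into a positive lower density of $\tau\in[0,T]$ for which $\sup_{s\in K_j}|\varphi_j(s+i\tau)-f_j(s)|<\varepsilon$ holds for every $j$ at once, which is \eqref{4-1}. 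Apart from the joint finite-dimensional convergence in Step 1, everything genuinely new is concentrated in the joint support computation of Step 2.
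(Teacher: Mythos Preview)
The statement you are addressing is a \emph{conjecture}, not a theorem: the paper does not contain a proof of it, and indeed explicitly presents it as open, going on to survey partial progress (Mishou's results \eqref{4-5} and Theorem \ref{thm4-3}) as evidence in its favour. So there is no proof in the paper to compare your proposal against.

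That said, your write-up is not really a proof either, and you seem to be aware of this. What you have produced is a careful and accurate diagnosis of \emph{why} the conjecture is hard: Steps 1 and 3 are standard machinery that would go through essentially as you describe, and you correctly isolate Step 2 --- the joint support computation --- as the genuine obstruction. Your observation that the asymptotic orthogonality \eqref{4-3} with its $O(1)$ error is too weak to play the role that exact character orthogonality plays in Theorem \ref{thm4-1} is exactly the point; one cannot, from $\sum_{p\le x}a_i(p)\overline{a_j(p)}/p=O(1)$ alone, separate the measures $\mu_1,\ldots,\mu_r$ in your duality argument. This is not a gap you have overlooked; it is the gap that makes the statement a conjecture. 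If this were submitted as a proof, the honest verdict would be that it establishes nothing new: the reduction to the support problem is well known (it is the content of the Bagchi--Laurin{\v c}ikas framework the paper surveys), and you have not supplied the missing idea for Step 2. As a piece of expository analysis of where the difficulty lies, however, it is sound.
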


Towards this conjecture, recent progress has been mainly due to Mishou.
In \cite{Mis12}, Mishou proved the following.   Consider two strips
$D_1=D(1/2,3/4)$ and $D_2=D(3/4,1)$.    Let $K_j$ be a compact subset of $D_j$
and $f_j\in H_0^c(K_j)$ ($j=1,2$).   Then
\begin{align}\label{4-5}
\liminf_{T\to\infty}\frac{1}{T}{\rm meas}\Bigl\{\tau\in[0,T]\;\left|\;                  
\sup_{s\in K}|\zeta(s+i\tau)-f_1(s)|<\varepsilon,\right.\Bigr.\\             
\Bigl.|L(s+i\tau,g)-f_2(s)|<\varepsilon\Bigr\}>0\notag 
\end{align}
holds, where $L(s,g)$ is the automorphic $L$-functions attached to a certain cusp
form $g$.    In Mishou \cite{Mis13}, this result was generalized to the case of
several $L$-functions belonging to the Selberg class.

The result \eqref{4-5} is weaker than the joint universality, because $K_1$ and
$K_2$ are in the strips disjoint to each other. 
In \cite{MisMZ}, Mishou succeeded in removing this restriction to obtain the
following theorem.
Let $g, g_1, g_2$ be holomorphic normalized Hecke-eigen cusp forms.

\begin{thm}\label{thm4-3}
{\rm (Mishou \cite{MisMZ})}
{\rm (i)} $\zeta(s)$ and $L(s,g)$ are jointly universal in $D(1/2,1)$.

{\rm (ii)} If $g_1$ and $g_2$ are distinct, then $L(s,g_1)$ and $L(s,g_2)$ are
jointly universal in $D(1/2,1)$.

{\rm (iii)} $\zeta(s)$ and $L(s,{\rm sym}^2 g)$ are jointly universal in
$D(2/3,1)$.

{\rm (iv)} If $g_1$ and $g_2$ are distinct, then $\zeta(s)$ and
$L(s,g_1\otimes g_2)$ are jointly universal in $D(3/4,1)$.

{\rm (v)} $L(s,g_1)$ and $L(s,g_1\otimes g_2)$ are 
jointly universal in $D(3/4,1)$.
\end{thm}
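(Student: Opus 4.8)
The plan is to run the Bagchi--Laurin{\v c}ikas machinery for joint universality on a \emph{single} probability space, so that the common vertical shift is respected. Fix the analytic normalization under which the functions involved ($\zeta$, $L(s,g)$, $L(s,{\rm sym}^2g)$, $L(s,g_1\otimes g_2)$) have critical strip $D(1/2,1)$, and for a pair $(\varphi_1,\varphi_2)$ from the list put $\sigma=\max\{\sigma_{\varphi_1},\sigma_{\varphi_2}\}$ with $\sigma_\varphi=\max\{1/2,1-d_\varphi^{-1}\}$ as in Theorem \ref{thm3-2}; thus $\sigma=1/2$ in (i) and (ii), $\sigma=2/3$ in (iii), $\sigma=3/4$ in (iv) and (v). Let $\Omega=\prod_p\{z\in\mathbb{C}\mid|z|=1\}$ with the product Haar measure, write $\omega(p)$ for the $p$-th coordinate, extend $\omega$ completely multiplicatively, and attach to each $\varphi$ the random Euler product $\varphi(s,\omega)$ obtained by inserting $\omega(p)$ into every local factor --- the crucial point being that the \emph{same} $\omega(p)$ twists both members of a pair. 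First I would prove a joint functional limit theorem: for almost all $\omega$ the products $\varphi_j(s,\omega)$ converge in $H(D(\sigma,1))$, and the distribution on $[0,T]$ of $\tau\mapsto(\varphi_1(\cdot+i\tau),\varphi_2(\cdot+i\tau))$ converges weakly, as $T\to\infty$, to the law $P$ of $(\varphi_1(\cdot,\omega),\varphi_2(\cdot,\omega))$ on $H(D(\sigma,1))^2$. This is by now standard: approximate each $\varphi_j$ by a finite Euler product in mean square on vertical lines --- valid in $D(\sigma,1)$ precisely because of the mean-value/positive-density input behind Theorems \ref{thm3-1}--\ref{thm3-2}, which is exactly what forces the strip to shrink in (iii)--(v) --- and combine this with the ergodicity of the translation flow $\omega\mapsto(p^{-i\tau}\omega(p))_p$ on $\Omega$ together with a tightness argument.

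The heart of the matter is to show that the support of $P$ is the \emph{full product} $S_1\times S_2$, where $S_j=\{h\in H(D(\sigma,1))\mid h\equiv 0\ \text{or}\ h\ \text{is non-vanishing}\}$ is the support of the $j$-th marginal (known from Bagchi's support theorem for a single function). Taking logarithms on compact sets $K_1,K_2$ --- legitimate for non-vanishing targets --- reduces this to showing that an $H(D(\sigma,1))^2$-valued random Dirichlet series whose dominant part is $\sum_p\omega(p)\bigl(c_1(p)\,p^{-s},\,c_2(p)\,p^{-s}\bigr)$, with $c_j(p)$ the $p$-th Dirichlet coefficient of $\varphi_j$, has support all of $H(D(\sigma,1))^2$. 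The mechanism is a two-dimensional rearrangement theorem of Pecherski{\u\i} type: such a series has full support once (a) each coordinate series individually has full support, which holds because $\sum_{p\le x}|c_j(p)|^2p^{-1}\sim\lambda_j\log\log x$ diverges --- this following by partial summation from the positive-density input \eqref{3-2}, \eqref{3-3} --- and (b) the two coordinates are \emph{asymptotically uncorrelated}, quantified by $\sum_{p\le x}c_1(p)\overline{c_2(p)}\,p^{-1}=O(1)$, i.e.\ the relevant instance of Selberg's orthogonality \eqref{4-3}. For (i) the bound (b) is the prime number theorem for $g$; for (ii) and (v) it follows from the analytic continuation and non-vanishing on $\Re s=1$ of the Rankin--Selberg $L$-functions $L(s,g_1\otimes\overline{g_2})$, respectively $L(s,{\rm sym}^2g_1\otimes g_2)$; for (iii) and (iv) from the corresponding properties of $L(s,{\rm sym}^2g)$ and $L(s,g_1\otimes g_2)$ --- all of which are known. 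Concretely one retains a positive-density set of primes on which $|c_j(p)|$ stays bounded below, uses (b) to decorrelate the retained coefficient pairs, and rearranges to hit an arbitrary target $(\log f_1,\log f_2)$ in the spirit of \cite{Bag81}.

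The principal obstacle is precisely this last passage --- converting the quasi-orthogonality bound (b) into genuine asymptotic independence of the two function-space-valued series, and thence into the full-support statement. In the earlier disjoint-strip results \eqref{4-5} of Mishou \cite{Mis12}\cite{Mis13} the difficulty could be sidestepped, since with $K_1\subset D(1/2,3/4)$ and $K_2\subset D(3/4,1)$ one may approximate $f_1$ and $f_2$ using essentially disjoint ranges of primes, whereas here the same primes, and the same rotations $\omega(p)$, must serve both targets at once. Once the support has been identified the theorem follows in the usual manner: given $K_1,K_2$ with connected complements and $f_j\in H_0^c(K_j)$, replace each $f_j$ by a polynomial via Mergelyan's theorem (Remark \ref{rem1-01}), observe that $(f_1,f_2)$ then lies in the support $S_1\times S_2$ of $P$ so that every neighbourhood of it has positive $P$-measure, and conclude from the weak convergence of the joint distribution that $\liminf_{T\to\infty}T^{-1}{\rm meas}\{\tau\in[0,T]\mid\sup_{s\in K_1}|\varphi_1(s+i\tau)-f_1(s)|<\varepsilon,\ \sup_{s\in K_2}|\varphi_2(s+i\tau)-f_2(s)|<\varepsilon\}>0$. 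The shrinkage of the strip in (iii)--(v) is forced at exactly two places above, namely the mean-square Euler-product approximation behind the limit theorem and the formula $\sum_{p\le x}|c_j(p)|^2p^{-1}\sim\lambda_j\log\log x$ behind full support of the marginal, both available only down to $\sigma=1-d_{\varphi_j}^{-1}$ for the relevant degree --- in complete parallel with Theorem \ref{thm3-2}.
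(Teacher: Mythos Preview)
The paper is a survey and contains no proof of Theorem \ref{thm4-3}; it only states the result, attributes it to \cite{MisMZ}, and adds the one-line remark that Mishou's method ``does not depend on any periodicity of coefficients'' and ``is based on orthogonality relations of Fourier coefficients.'' Your outline is entirely consistent with that description and has the correct architecture: a joint functional limit theorem on the single probability space $\Omega=\prod_p S^1$ so that the common shift is respected, followed by a support calculation driven by Selberg-type orthogonality \eqref{4-3}, with the strip restrictions in (iii)--(v) correctly traced to the degree via $\sigma_\varphi=\max\{1/2,1-d_\varphi^{-1}\}$.

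The one place where your sketch is still only a sketch is exactly the step you yourself flag as the ``principal obstacle'': converting the average orthogonality $\sum_{p\le x}c_1(p)\overline{c_2(p)}\,p^{-1}=O(1)$ into the full-support statement ${\rm supp}\,P=S_1\times S_2$. The phrase ``two-dimensional rearrangement theorem of Pecherski{\u\i} type'' names the desired conclusion rather than a citable lemma; the Pecherski{\u\i}/Bagchi denseness argument is a one-coordinate Hilbert-space statement, and its joint analogue does \emph{not} follow formally from (a) full support of each marginal plus (b) vanishing cross-correlation. An $O(1)$ bound on the correlation sum is a statement about averages, not about the joint distribution of individual pairs $(c_1(p),c_2(p))$, and does not by itself guarantee that one can steer the two coordinates independently over a positive-density set of primes. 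What is actually needed is finer information on how often the pair $(c_1(p),c_2(p))$ falls into prescribed regions --- and supplying that, via the concrete Rankin--Selberg / symmetric-power input you list, is precisely the ``orthogonality relations of Fourier coefficients'' the survey alludes to and the genuinely new content of \cite{MisMZ}. So your plan has the right shape, but the passage you single out is not a routine adaptation of the single-function positive-density method; it is the heart of the theorem, and would have to be worked out in full rather than invoked.
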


\begin{rem}\label{rem4-1}
The universality theorem for $L(s,g\otimes g)$ by the author \cite{Mat01}
(mentioned in Section \ref{sec-3}) was proved in $D(3/4,1)$, but the above theorem
of Mishou especially implies that the universality for $L(s,g\otimes g)$ is valid
in the wider region $D(2/3,1)$.
\end{rem}

A remarkable feature of Mishou's method is that it does not depend on any periodicity 
of coefficients.   His proof is based on orthogonality relations of Fourier
coefficients.   Theorem \ref{thm4-3}
is a strong support to Conjecture \ref{conj4-1}.   

\section{The strong universality}\label{sec-5}

So far we have talked about universality only for zeta and $L$-functions
with Euler products.    
However already in the first decade, the universality for 
zeta-functions without Euler products was also studied.    Let $0<\alpha\leq 1$.   
The Hurwitz zeta-function with the parameter $\alpha$ is defined by
$\zeta(s,\alpha)=\sum_{n=0}^{\infty}(n+\alpha)^{-s}$, and does not have
the Euler product (except for the special cases $\alpha=1, 1/2$).
The known universality theorem for $\zeta(s,\alpha)$ is as follows.

\begin{thm}\label{thm5-1}
{\rm (Bagchi \cite{Bag81}, Gonek \cite{Gon79})}
Let $K$ be a compact subset of $D(1/2,1)$ with connected complement, and 
$f\in H^c(K)$.   Then for any $\varepsilon>0$, 
\begin{align}\label{5-1} 
\liminf_{T\to\infty}\frac{1}{T}{\rm meas}\left\{\tau\in[0,T]\;\left|\;
\sup_{s\in K}|\zeta(s+i\tau,\alpha)-f(s)|<\varepsilon\right.\right\}>0
\end{align}
holds, provided $\alpha$ is transcendental or rational $(\neq 1,1/2)$.
\end{thm}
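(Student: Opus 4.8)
The plan is to follow Bagchi's probabilistic strategy, the key novelty being that the Euler product is no longer available, so the coefficient sequence must be modeled directly. First I would set up the probabilistic framework: let $\gamma$ be the unit circle, form the infinite torus $\Omega=\prod_{n=0}^{\infty}\gamma$ with its Haar probability measure $m_H$, and write $\omega(n)$ for the $n$-th coordinate projection. The relevant random Dirichlet series is $\zeta(s,\alpha,\omega)=\sum_{n=0}^{\infty}\omega(n)(n+\alpha)^{-s}$. The crucial point is that the numbers $\{\log(n+\alpha)\}_{n\ge 0}$ are linearly independent over $\mathbb{Q}$ when $\alpha$ is transcendental (an easy argument: a nontrivial relation would force an algebraic relation among the $n+\alpha$), and also in the rational non-exceptional case by a number-theoretic argument on prime factorizations; this independence is exactly what makes the one-parameter flow $\tau\mapsto((n+\alpha)^{-i\tau})_n$ equidistributed on $\Omega$ by the Kronecker--Weyl theorem of Remark~\ref{rem1-0}. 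From this one obtains the functional limit theorem: the $H(D(1/2,1))$-valued random variables $\zeta(s+i\tau,\alpha)$ converge in distribution, as $T\to\infty$, to the random element $\zeta(s,\alpha,\omega)$, whose induced law $P_\alpha$ on $H(D(1/2,1))$ is the object to be understood.

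Next I would identify the support of $P_\alpha$. The claim is that the support is \emph{all} of $H(D(1/2,1))$ --- this is precisely why one gets \emph{strong} universality, i.e.\ approximation of functions that may have zeros. The mechanism is the theorem (going back to Bagchi, itself resting on a Hilbert-space argument of the Pecherski\u\i\ type for rearrangements of conditionally convergent series) that if $\{u_n(s)\}$ is a sequence in $H(D)$ with $\sum_n u_n$ convergent in distribution after random sign/rotation twists, and the series $\sum_n \|u_n\|$ diverges on compacta while $\sum_n u_n z_n$ can be made to converge to an arbitrary target for suitable unimodular $z_n$, then the support is the whole space. Here $u_n(s)=(n+\alpha)^{-s}$, and the needed divergence $\sum_n (n+\alpha)^{-\sigma}=\infty$ for $\sigma\le 1$ is immediate; the absence of an Euler product is what \emph{removes} the non-vanishing constraint that appeared in Theorem~\ref{thm1-1}. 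I would then invoke Mergelyan's theorem (Remark~\ref{rem1-01}) to reduce the approximation of a general $f\in H^c(K)$ to the approximation of a polynomial, and combine the support statement with the functional limit theorem: since $f$ lies in the support of $P_\alpha$, the set of $\tau$ for which $\zeta(s+i\tau,\alpha)$ is within $\varepsilon$ of $f$ on $K$ has positive lower density, which is \eqref{5-1}.

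The main obstacle, and the step I would dwell on, is establishing the linear independence of $\{\log(n+\alpha)\}_{n\ge 0}$ over $\mathbb{Q}$ in the rational case $\alpha=a/q$ with $\alpha\neq 1,1/2$, together with the accompanying mean-value / approximation estimates needed to justify that the truncated Dirichlet polynomials approximate $\zeta(s,\alpha)$ in the appropriate mean-square sense inside the strip. In the transcendental case the independence is clean, but in the rational case one must pass to $\{\log(qn+a)\}$ and run a prime-factorization argument, carefully excluding exactly the values $\alpha=1$ and $\alpha=1/2$ where the series degenerates (it becomes $\zeta(s)$ or $(2^s-1)\zeta(s)$, both of which have Euler products and hence only ordinary, not strong, universality). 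A secondary technical point is the density estimate: one needs that the probability measures $P_{T}$ are tight in $H(D(1/2,1))$, which follows from Cauchy-integral bounds applied to uniform mean-square estimates for $\zeta(\sigma+i\tau,\alpha)$ on vertical segments with $1/2<\sigma<1$; these are classical but must be stated uniformly enough to feed the Prokhorov argument. Once these pieces are in place the deduction of \eqref{5-1} is formal.
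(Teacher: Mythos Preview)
Your treatment of the transcendental case is correct and is essentially Bagchi's argument as the paper indicates: the linear independence of $\{\log(n+\alpha)\}_{n\ge 0}$ over $\mathbb{Q}$ drives the functional limit theorem, and the support of the limiting measure is all of $H(D(1/2,1))$, yielding strong universality.

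The rational case, however, contains a genuine error. You propose to ``pass to $\{\log(qn+a)\}$ and run a prime-factorization argument'' to obtain linear independence over $\mathbb{Q}$, but this independence simply fails for every rational $\alpha$. Take $\alpha=1/3$: then $n+\alpha=(3n+1)/3$, and one computes
\[
\log\tfrac{1}{3}-2\log\tfrac{4}{3}+\log\tfrac{16}{3}
=(-\log 3)-2(2\log 2-\log 3)+(4\log 2-\log 3)=0,
\]
a nontrivial rational relation among $\log(0+\alpha),\log(1+\alpha),\log(5+\alpha)$. More generally, whenever $qn_1+a$ and $qn_2+a$ share a prime factor (and this happens for infinitely many pairs in any arithmetic progression), multiplicative relations among the $qn+a$ translate into linear relations among the $\log(n+\alpha)$. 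So the probabilistic scheme with the full torus $\prod_{n\ge 0}\gamma$ cannot work here: the flow $\tau\mapsto((n+\alpha)^{-i\tau})_n$ is \emph{not} equidistributed on the infinite torus, and your functional limit theorem in the form stated collapses.

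The paper's route in the rational case is entirely different and does not attempt to salvage linear independence. One writes
\[
\zeta\!\left(s,\tfrac{a}{b}\right)=\frac{b^{s}}{\varphi(b)}\sum_{\chi\bmod b}\overline{\chi}(a)\,L(s,\chi)
\]
and then invokes the \emph{joint} universality of the Dirichlet $L$-functions $L(s,\chi)$ (Theorem~\ref{thm4-1}). Given $f\in H^c(K)$, one chooses target functions $f_\chi\in H_0^c(K)$ so that the corresponding linear combination approximates $f$; the freedom in this choice (there are $\varphi(b)\ge 2$ characters when $\alpha\neq 1,1/2$) is precisely what allows $f$ to have zeros even though each $L(s,\chi)$ can only approximate non-vanishing targets. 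This is the step you are missing, and it explains cleanly why $\alpha=1$ and $\alpha=1/2$ are excluded: in those cases $\varphi(b)=1$ and the sum reduces to a single $L$-function, so only non-vanishing $f$ can be reached.
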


To prove this theorem, 
when $\alpha$ is transcendental, we use the fact that the elements of the set
$$
\{\log(n+\alpha)\;|\;n\in\mathbb{N}_0\}
$$
are linearly independent over
$\mathbb{Q}$.    On the other hand, when $\alpha=a/b$ is rational,
then in view of the formula
\begin{align}\label{5-2}
\zeta(s,a/b)=\frac{b^s}{\varphi(b)}\sum_{\chi({\rm mod}\; b)}
\overline{\chi}(a)L(s,\chi),
\end{align}
we can reduce the problem to the joint universality of Dirichlet $L$-functions,
so we can apply Theorem \ref{thm4-1}.
    The case of algebraic irrational $\alpha$ is still open.

A remarkable point is that, in the statement of Theorem \ref{thm5-1},
we do not assume that the target function $f(s)$ is non-vanishing on $K$.   
This is a big
difference from Theorem \ref{thm1-1}, and when a universality-type
theorem holds without the non-vanishing assumption, we call it a
{\it strong universality theorem}.

Strong universality has an important application to the theory of 
zero-distribution.   Let $a<\sigma_1<\sigma_2<b$, and let
\begin{align*}
N(t;\sigma_1,\sigma_2;\varphi)=
\#\{\rho\in\mathbb{C}\;|\;\sigma_1\leq\Re\rho\leq\sigma_2,
0\leq\Im\rho\leq T,\varphi(\rho)=0\}
\end{align*}
for a function $\varphi$.
(In the above definition, zeros are counted with
multiplicity.)
Then we have the following consequence.

\begin{thm}\label{thm5-2}
If $\varphi(s)$ is strongly universal in the region $D(a,b)$, then
there exists a positive constant $C$ for which
\begin{align}\label{5-3}                                                                
N(T;\sigma_1,\sigma_2;\varphi)\geq C T 
\end{align}
holds for any $\sigma_1,\sigma_2$ satisfying $a<\sigma_1<\sigma_2<b$.
\end{thm}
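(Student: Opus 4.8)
The plan is to show that a strongly universal function $\varphi(s)$ is forced to have $\gg T$ zeros in every vertical strip $\sigma_1 \le \Re s \le \sigma_2$ with $a<\sigma_1<\sigma_2<b$, by exploiting the fact that strong universality permits the target function to vanish. The key idea is to approximate a function $f$ that has exactly one zero inside a suitable disc, and then invoke Rouch\'e's theorem (or the argument principle) to transfer that zero to $\varphi(s+i\tau)$ for each admissible $\tau$. Since the set of such $\tau$ has positive lower density, and since well-separated shifts $\tau$ produce zeros of $\varphi$ in disjoint horizontal windows, counting these zeros yields the linear lower bound.

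First I would fix a closed disc $K = \{s : |s - s_0| \le \rho\}$ contained in the open rectangle $\{a<\Re s<b,\ |\Im s|<1\}$ with $\sigma_1 < \Re s_0 - \rho$ and $\Re s_0 + \rho < \sigma_2$, so that any point of $s_0 + i\tau + (\text{small perturbation})$ with $\tau$ real still has real part in $(\sigma_1,\sigma_2)$. The complement of such a disc is connected, so $K$ is admissible in the sense of the strong universality hypothesis. Next I would choose the explicit target $f(s) = s - s_0 \in H^c(K)$, which vanishes precisely once in $K$ (at the center) and is nonzero on a neighborhood of the boundary circle $|s-s_0| = \rho$; on that circle $|f(s)| = \rho$. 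Apply strong universality with $\varepsilon < \rho$: for all $\tau$ in a set $E_T \subset [0,T]$ with $\operatorname{meas} E_T \ge cT$ for some $c>0$ and all large $T$, we have $\sup_{s\in K}|\varphi(s+i\tau) - f(s)| < \varepsilon < \rho = \min_{|s-s_0|=\rho}|f(s)|$. By Rouch\'e's theorem, $\varphi(s+i\tau)$ has exactly one zero in the interior of $K$; equivalently, $\varphi$ has a zero in the translated disc $K + i\tau$, whose real part lies in $(\sigma_1,\sigma_2)$ and whose imaginary part lies in an interval of length at most $2\rho$ centered at $\tau$.

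Then I would pass from ``positive measure of good $\tau$'' to ``many zeros.'' Partition $[0,T]$ into $\lfloor T/(3\rho)\rfloor$ consecutive intervals of length $3\rho$; by the lower density bound, for large $T$ a positive proportion---at least, say, $c'T$ for some $c'>0$---of these intervals contain a point of $E_T$. Picking one good $\tau$ from each such interval yields zeros of $\varphi$ lying in disjoint horizontal strips (the windows $[\tau - \rho, \tau + \rho]$ are separated because the $\tau$'s are at least $3\rho$ apart), hence these zeros are distinct. This produces at least $c'T$ distinct zeros of $\varphi$ with real part in $(\sigma_1,\sigma_2)$ and imaginary part in $[0,T]$ (after discarding the at most $O(1)$ that might stray slightly outside $[0,T]$ near the endpoints), giving $N(T;\sigma_1,\sigma_2;\varphi) \ge CT$ with $C = c'/2$, say.

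The main obstacle, and the only subtle point, is the correct bookkeeping in the last step: one must be careful that the same zero of $\varphi$ is not produced by two different shifts $\tau$, and that the count respects the imaginary-part constraint $0 \le \Im \rho \le T$ in the definition of $N$. This is handled exactly by the separation argument above---choosing $\tau$'s spaced more than $2\rho$ apart guarantees the disc-windows $K+i\tau$ are pairwise disjoint, so distinct shifts give distinct zeros, and the boundary losses near $0$ and $T$ are negligible. Everything else (existence of the admissible disc, Rouch\'e's estimate) is routine once the strong universality hypothesis is in hand.
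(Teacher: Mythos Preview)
Your proposal is correct and follows essentially the same approach as the paper: choose a small disc $K$ centered on the real axis inside the strip, take $f(s)=s-s_0$, apply strong universality with $\varepsilon<\rho$, and invoke Rouch\'e's theorem to force a zero of $\varphi$ in each shifted disc $K+i\tau$. Your version is in fact slightly more complete, since you spell out the separation argument (spacing the $\tau$'s by $3\rho$ to avoid overcounting) that the paper leaves implicit in passing from ``positive lower density of good $\tau$'' to the bound $N(T;\sigma_1,\sigma_2;\varphi)\ge CT$.
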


\begin{proof}
Let $\delta$ be a small positive number, $0<\varepsilon<\delta$, 
$\sigma_1<\sigma_0<\sigma_2$, and $K=\{s\in\mathbb{C}\;|\;|s-\sigma_0|<\delta\}$.   
We choose $\delta$ so small that $K\subset D(a,b)$.   
We apply the strong universality to this $K$, $f(s)=s-\sigma_0$ and $\varepsilon$ 
to obtain that the set of real numbers $\tau$ such that
\begin{align*}
\sup_{|s-\sigma_0|\leq\delta}|\varphi(s+i\tau)-f(s)|<\varepsilon
\end{align*}
is of positive lower density.   Then, for such $\tau$,
\begin{align*}
\sup_{|s-\sigma_0|\leq\delta}|\varphi(s+i\tau)-f(s)|<\delta=
\inf_{|s-\sigma_0|=\delta}|f(s)|.
\end{align*}
Therefore by Rouch{\'e}'s theorem we see that 
$f(s)+(\varphi(s+i\tau)-f(s))=\varphi(s+i\tau)$ has the same number of zeros as that 
of $f(s)$ in the region $|s-\sigma_0|<\delta$, but the latter is obviously 1. 
That is, for each $\tau$ in the above set,
$\varphi(s)$ has one zero in $|s-(\sigma_0+i\tau)|<\delta$.
\end{proof}

\begin{cor}\label{cor5-1}
If $\alpha$ is transcendental or rational $(\neq 1,1/2)$, then 
$$C_1 T\leq N(T;\sigma_1,\sigma_2;\zeta(s,\alpha))\leq C_2 T$$
holds for any $1/2<\sigma_1<\sigma_2<1$.
\end{cor}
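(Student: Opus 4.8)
The plan is to prove the two inequalities by completely different routes, since the lower bound is an immediate payoff of the machinery just assembled, whereas the upper bound is a separate, classical density-type estimate.

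\textbf{Lower bound.} By Theorem \ref{thm5-1}, $\zeta(s,\alpha)$ is strongly universal in $D(1/2,1)$ whenever $\alpha$ is transcendental or rational $(\neq 1,1/2)$. Hence Theorem \ref{thm5-2} applies verbatim with $\varphi(s)=\zeta(s,\alpha)$, $a=1/2$, $b=1$, and produces a constant $C>0$ with $N(T;\sigma_1,\sigma_2;\zeta(s,\alpha))\geq CT$ for every pair $1/2<\sigma_1<\sigma_2<1$. Taking $C_1=C$ gives the left-hand inequality. Nothing further is needed here; this is the direct consequence advertised before the statement.

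\textbf{Upper bound.} For the right-hand inequality one argues along classical lines. When $\alpha=a/b$ is rational, the identity \eqref{5-2} writes $\zeta(s,\alpha)$, up to the zero-free factor $b^s/\varphi(b)$, as a finite linear combination $F(s)=\sum_{\chi(\mathrm{mod}\,b)}\overline{\chi}(a)L(s,\chi)$; the zeros in the strip are therefore among those of $F$. Applying Littlewood's lemma to the rectangle with horizontal range $[\sigma_1,\sigma_2]$ and vertical range $[0,T]$, the variation of $\arg F$ along the horizontal sides and along $\Re s=\sigma_2$ contributes $O(\log T)$, while $\int_0^T\log|F(\sigma_1+it)|\,dt\ll T$ follows from the standard second moment $\int_0^T|L(\sigma_1+it,\chi)|^2\,dt\ll_{\sigma_1}T$ (valid since $\sigma_1>1/2$) together with Jensen's inequality; a matching lower bound $\int_0^T\log|F(\sigma+it)|\,dt\gg -T$ for some line $\sigma\in(\sigma_2,1)$ comes from absolute convergence of the Euler products near $\sigma=1$ plus continuation of the bound inward. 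This forces $N(T;\sigma_1,\sigma_2;\zeta(s,a/b))\ll_{\alpha}T$. For general $\alpha$, in particular transcendental $\alpha$, the same Littlewood-lemma scheme is run directly for $\zeta(s,\alpha)$, the decisive input being the classical mean-square estimate $\int_0^T|\zeta(\sigma+it,\alpha)|^2\,dt\ll_{\sigma,\alpha}T$ for $1/2<\sigma<1$.

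\textbf{Main obstacle.} The real work is entirely in the upper bound, and within it in the step securing a lower bound of the correct size $-O(T)$ for $\int_0^T\log|\zeta(\sigma+it,\alpha)|\,dt$, i.e. controlling how frequently $|\zeta(\sigma+it,\alpha)|$ can be abnormally small; this is precisely the point at which one should invoke, rather than reprove, the known zero-density results for the Hurwitz zeta-function. By contrast, the lower bound demands no new idea at all, being a one-line application of Theorems \ref{thm5-1} and \ref{thm5-2}.
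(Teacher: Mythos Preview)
Your proposal matches the paper's treatment: the lower bound is obtained exactly as you say, by feeding Theorem~\ref{thm5-1} into Theorem~\ref{thm5-2}, and for the upper bound the paper does not argue at all but simply refers to \cite[Chapter~8, Theorem~4.10]{LauGar02}. Your Littlewood-lemma sketch therefore goes beyond what the paper provides, but it is in the spirit of the standard argument recorded there, and your own concluding remark---that one should invoke the known zero-density result rather than reprove it---is precisely what the paper does.
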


As for the upper bound part of this corollary, 
see \cite[Chapter 8, Theorem 4.10]{LauGar02}.

Further topics on the application of universality to the distribution of zeros
will be discussed in Section \ref{sec-9} and Section \ref{sec-15}.

Now strong unversality theorems are known for many other zeta-functions.
The Estermann zeta-function is defined by
\begin{align}\label{5-4}
E\left(s,\frac{k}{l},\alpha\right)=\sum_{n=1}^{\infty}\sigma_{\alpha}(n)
\exp\left(2\pi i\frac{k}{l}n\right)n^{-s},
\end{align}
where $k$ and $l$ are coprime integers and 
$\sigma_{\alpha}(n)=\sum_{d|n}d^{\alpha}$.   
The strong universality for
$E(s,k/l,\alpha)$ was studied in Garunk{\v s}tis, Laurin{\v c}ikas, 
{\v S}le{\v z}evi{\v c}ien{\.e} and J. Steuding\cite{GarLauSleSte02}.
The method is to write $E(s,k/l,\alpha)$ as a linear combination of
$E(s,\chi,\alpha)=\sum_{n=1}^{\infty}\sigma_{\alpha}(n)\chi(n)n^{-s}$, and apply 
a joint universality theorem for $E(s,\chi,\alpha)$ which follows from 
{\v S}le{\v z}evi{\v c}ien{\.e} \cite{Sle02}.

The Lerch zeta-function is defined by 
$$
\zeta(s;\alpha,\lambda)=\sum_{n=0}^{\infty}e^{2\pi i\lambda n}(n+\alpha)^{-s},
$$
where $0<\alpha\leq 1$ and $\lambda$ is real.   When $\lambda$ is an integer, then
$\zeta(s;\alpha,\lambda)$ reduces to the Hurwitz zeta-function, so we may assume
$0<\lambda<1$.
The strong universality for $\zeta(s;\alpha,\lambda)$ was proved by
Laurin{\v c}ikas \cite{Lau97} when $\alpha$ is transcendental.
The case when $\alpha$ is rational was discussed by Laurin{\v c}ikas \cite{Lau98c}.  
See also the textbook \cite{LauGar02} of Laurin{\v c}ikas and Garunk{\v s}tis.

Let $\mathfrak{B}=\{b_n\}$ is a periodic sequence, not necessarily
multiplicative.
The universality for periodic zeta-functions
$\zeta(s,\mathfrak{B})=\sum_{n=1}^{\infty}b_n n^{-s}$ was first
studied by J. Steuding \cite{Ste03b} (see \cite[Chapter 11]{Ste07}).
Kaczorowski \cite{Kacz09} proved that there exists a constant
$c_0=c_0(\mathfrak{B})$ such that the universality holds for 
$\zeta(s,\mathfrak{B})$, provided that
$$
\max_{s\in K}\Im(s)-\min_{s\in K}\Im(s)\leq c_0.
$$
This result is a consequence of the hybrid joint universality theorem of
Kaczorowski and Kulas \cite{KacKul07} (see Section \ref{sec-15}). 
Javtokas and Laurin{\v c}ikas \cite{JavLau06}\cite{JavLau06b} studied the strong
universality for periodic Hurwitz zeta-function
$$
\zeta(s,\alpha,\mathfrak{B})=\sum_{n=0}^{\infty}b_n(n+\alpha)^{-s}.
$$
They proved that the strong universality holds for $\zeta(s,\alpha,\mathfrak{B})$,
when $\alpha$ is transcendental.

A more general situation was considered by Laurin{\v c}ikas, Schwarz and
J. Steuding \cite{LauSchSte03}.   Let $\{\lambda_n\}_{n=1}^{\infty}$ be an
increasing sequence of real numbers, linearly independent over $\mathbb{Q}$, 
and $\lambda_n\to\infty$ as $n\to\infty$.   Define the general Dirichlet series
$f(s)=\sum_{n=1}^{\infty}a_n \exp(-\lambda_n s)$,
which is assumed to be convergent absolutely in the region $\sigma>\sigma_a$.
Put $r(x)=\sum_{\lambda_n\leq x}1$ and $c_n=a_n \exp(-\lambda_n\sigma_a)$.   
We suppose

(i) $f(s)$ cannot be represented as an Euler product,

(ii) $f(s)$ can be continued meromorphically to $\sigma>\sigma_1$, and holomorphic
in $D(\sigma_1,\sigma_a)$,

(iii) For $\sigma>\sigma_1$ it holds that $f(s)=O(|t|^{\alpha})$ with some 
$\alpha>0$,

(iv) For $\sigma>\sigma_1$ it holds that
$$
\int_{-T}^T |f(\sigma+it)|^2 dt=O(T),
$$

(v) $r(x)=Cx^{\kappa}+O(1)$ with a $\kappa>1$,

(vi) $|c_n|$ is bounded and 
$\sum_{\lambda_n\leq x}|c_n|^2=\theta r(x)(1+o(1))$ with a $\theta>0$.

Then we have

\begin{thm}\label{thm5-3}
{\rm (Laurin{\v c}ikas, Schwarz and J. Steuding \cite{LauSchSte03})}
If $f(s)$ satisfies all the above conditions, then the strong universality holds
for $f(s)$ in the region $D(\sigma_1,\sigma_a)$.
\end{thm}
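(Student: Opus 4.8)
The plan is to adapt the probabilistic (Bagchi-style) proof of universality to this axiomatic setting, treating conditions (i)--(vi) as the substitutes for the standard ingredients. First I would prove a \emph{functional limit theorem}: on the space $H(D(\sigma_1,\sigma_a))$, the distribution of the random shift $f(s+i\tau)$, as $\tau$ ranges over $[0,T]$, converges weakly as $T\to\infty$ to some probability measure $P_f$. The construction of the limiting random element proceeds via the infinite sum $f(s,\omega)=\sum_{n=1}^\infty c_n\,\omega(n)\exp(-\lambda_n s)$, where $(\omega(n))_n$ is a sequence of random variables indexed by the frequencies; because $\{\lambda_n\}$ is linearly independent over $\mathbb{Q}$ and tends to infinity, condition (v) gives the exponent of convergence of the $\lambda_n$, which together with the bound on $|c_n|$ from (vi) guarantees almost sure convergence of this series in $H(D(\sigma_1,\sigma_a))$. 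The growth hypothesis (iii) and the mean-square bound (iv) are precisely what one needs to control the tail and to establish tightness of the family $\{P_{T,f}\}$, so that Prokhorov's theorem applies; the meromorphic continuation with holomorphy in the strip from (ii) ensures that the objects live on the right domain.

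Second, I would identify the support of the limit measure $P_f$ as the whole space $H(D(\sigma_1,\sigma_a))$, or at least as the closure of the set $H(D(\sigma_1,\sigma_a))$ of \emph{all} holomorphic functions (not merely the non-vanishing ones). This is the step where the absence of an Euler product, condition (i), is used: the support statement for a function \emph{with} an Euler product is only $H_0^c$, because the Euler factors are non-vanishing, whereas for a general Dirichlet series one expects the full space. The key tool is a denseness argument built on the Kronecker--Weyl theorem (Remark~\ref{rem1-0}): since $\{\lambda_n\}$ is $\mathbb{Q}$-linearly independent, the arguments $\tau\lambda_n$ are simultaneously equidistributed modulo $1$, and one runs a rearrangement argument (in the style of Pecherski\u\i, or equivalently a Hilbert-space rearrangement lemma) to show that by a suitable choice of the phases $\omega(n)$ the series $\sum c_n\,\omega(n)\exp(-\lambda_n s)$ can be made to approximate any prescribed element of $H^c(K)$ uniformly on a given compact $K\subset D(\sigma_1,\sigma_a)$. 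The quantitative condition (vi), $\sum_{\lambda_n\le x}|c_n|^2=\theta r(x)(1+o(1))$ with $\theta>0$, is the ``positive density'' input that makes the rearrangement work: it guarantees enough mass in the coefficients to realize arbitrary targets, and it plays the role that the prime number theorem plays in the classical case (cf. Remark~\ref{rem3-1}).

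Third, having the limit theorem and the support statement, the strong universality follows by the standard soft argument: given $f_0\in H^c(K)$ and $\varepsilon>0$, apply Mergelyan's theorem (Remark~\ref{rem1-01}) to approximate $f_0$ uniformly on $K$ by a polynomial $p$, form the open neighborhood $U=\{g\in H(D(\sigma_1,\sigma_a)):\sup_{s\in K}|g(s)-p(s)|<\varepsilon/2\}$, note that $U$ meets the support of $P_f$, hence $P_f(U)>0$, and then weak convergence plus the portmanteau theorem give $\liminf_{T\to\infty}\frac1T\,{\rm meas}\{\tau\in[0,T]: f(s+i\tau)\in U\}\ge P_f(U)>0$, which after a triangle inequality yields \eqref{5-1}-type lower density for $f_0$ itself. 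Note that no non-vanishing hypothesis on $f_0$ enters here, precisely because the support is the full space.

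The main obstacle I expect is the support identification in the second step, i.e. proving that the set of realizable functions is \emph{all} of $H(D(\sigma_1,\sigma_a))$ rather than a proper subset. The rearrangement-theorem machinery requires verifying that the family $\{c_n\exp(-\lambda_n s)\}$, regarded as vectors in the relevant Hilbert space of holomorphic functions on a compact exhaustion of the strip, satisfies the hypotheses of Pecherski\u\i's theorem: one needs that each individual term tends to zero, that the series of norms diverges (this is where (vi) with $\theta>0$ is essential), and a conditional-convergence/cancellation property coming from the equidistribution of the phases. Checking these for a general coefficient sequence satisfying only (vi)---without multiplicativity, without an Euler product---is the delicate technical heart of the argument, and is what distinguishes Theorem~\ref{thm5-3} from the easier Euler-product cases; the mean-square bound (iv) and the polynomial growth (iii) are needed in tandem here to pass from the compact-set Hilbert-space estimate back to a statement about the Dirichlet series $f$ on vertical strips via a contour-integral (approximate functional equation) representation.
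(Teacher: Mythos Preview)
The paper is a survey and does not give a proof of Theorem~\ref{thm5-3}; it merely states the result and cites \cite{LauSchSte03}. There is therefore no in-paper argument to compare against. That said, your outline is exactly the Bagchi-style probabilistic scheme that the survey itself describes (Section~\ref{sec-1}) as the standard route to such theorems, and it is the approach of the original reference: a functional limit theorem on $H(D(\sigma_1,\sigma_a))$, identification of the support of the limiting measure as the \emph{entire} space (this is where the absence of an Euler product enters), and then the Mergelyan/portmanteau conclusion. Your diagnosis of where each hypothesis (ii)--(vi) is consumed is accurate, and you correctly flag the support step as the technical heart.

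One small correction of emphasis: in the limit-theorem step the random phases $\omega(n)$ should be taken i.i.d.\ uniform on the unit circle, indexed by $n$ directly (not by primes, since there is no multiplicative structure here); the linear independence of $\{\lambda_n\}$ over $\mathbb{Q}$ is what makes the finite-dimensional distributions of $(\exp(-i\tau\lambda_n))_n$ converge to the product Haar measure via Kronecker--Weyl, and this is what underlies both the weak convergence and the support computation. Also, in the support step one does not literally need Pecherski{\u\i}'s rearrangement theorem in the form used for Euler products; for general Dirichlet series the relevant tool is a conditional-convergence lemma in the Bergman-type Hilbert space on $K$, and condition (vi) with $\theta>0$ is precisely the divergence-of-norms input that drives it. Your proposal already anticipates this, so the plan is sound.
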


In Section \ref{sec-3} and Section \ref{sec-5}, we have seen a lot of examples of
zeta and $L$-functions, with or without Euler products, for which the universality
property holds.   How general is this property expected to hold?   The following 
conjecture predicts that any ``reasonable'' Dirichlet series would satisfy the
universality property.

\begin{conj}\label{conj5-1}
{\rm (Yu. V. Linnik and I. A. Ibragimov)}
All functions given by Dirichlet series and meromorphically continuable to the left
of the half-plane of absolute convergence are universal in some suitable region.
\end{conj}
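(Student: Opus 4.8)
\bigskip

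This conjecture is still wide open, and in the literal form stated above it is in fact false: if the coefficients $a_n$ are supported on the powers of a single integer $k$, say $a_n=0$ unless $n=k^m$, then $\varphi(s)=\sum_{m\geq 0}a_{k^m}(k^{-s})^m=g(k^{-s})$ for a power series $g$, so for each fixed $\sigma$ the values $\varphi(\sigma+it)$ lie on the image of the circle $|z|=k^{-\sigma}$ under $g$, a one-dimensional curve, and the shifts $\varphi(s+i\tau)$ form only a one-parameter family of functions; a concrete instance is $\varphi(s)=(1-k^{-s})^{-1}$, which is meromorphically continuable to all of $\mathbb{C}$ yet is universal in no region. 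Hence the first task of any serious attack is to pin down the correct hypotheses. A reasonable working formulation would retain: (a) polynomial growth $\varphi(\sigma+it)=O(|t|^{A})$ for $\sigma$ in the strip of interest; (b) a second-moment bound $\int_{0}^{T}|\varphi(\sigma+it)|^{2}\,dt=O(T)$ valid down to some $\sigma_{0}$; and (c) an ``independence'' condition on the support, e.g.\ that the relevant logarithms are linearly independent over $\mathbb{Q}$ in the non-Euler case, or, when an Euler product is present, a positive-density estimate $\sum_{p\leq x}|a_{p}|^{2}=(\lambda+o(1))x/\log x$ as in Theorem \ref{thm3-2}. The degenerate examples above are precisely those ruled out by (c).

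Granting such hypotheses, the plan is to run Bagchi's probabilistic scheme in full generality. First I would show that $\varphi(s)$ is approximated in mean square on vertical lines by its Dirichlet polynomial truncations $\varphi_{X}(s)=\sum_{n\leq X}a_{n}n^{-s}$, using (a), (b) and a smoothing and contour-shift argument. Next I would establish a \emph{functional limit theorem}: the distribution of the shifts $\tau\mapsto\varphi(s+i\tau)$ on the space $H(D)$ converges weakly, as $T\to\infty$, to a limit measure $P$, which is the law of the random Dirichlet series $\sum_{n}a_{n}n^{-s}\omega(n)$ with $\omega$ ranging over the infinite torus equipped with Haar measure; here the Kronecker--Weyl theorem recalled in Remark \ref{rem1-0} is what converts the $\tau$-average into an average over the torus, and condition (c) is what guarantees the torus is of the ``right size''.

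The heart of the matter is then to identify the support of $P$ with all of $H(D)$, respectively with the closure of $H_{0}^{c}$ in the Euler-product case. This is a denseness statement about the random series $\sum_{n}a_{n}n^{-s}\omega(n)$, and I would prove it by the rearrangement technique underlying Voronin's original argument: reorder the terms (prime by prime when an Euler product is available, term by term otherwise) so that one can steer the partial sums towards a prescribed holomorphic target, invoking a Pecherski{\u\i}-type theorem in a suitable Hilbert space of holomorphic functions together with Mergelyan's polynomial approximation (Remark \ref{rem1-01}). It is precisely at this point that the arithmetic input (c) is indispensable: without enough $\ell^{2}$-mass carried by the primes, the conditionally convergent rearrangement cannot reach an arbitrary target. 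Once $\mathrm{supp}\,P=H(D)$ is known, universality is immediate, since for any $f$ and any $\varepsilon$ the open set $\{g:\sup_{K}|g-f|<\varepsilon\}$ has positive $P$-measure, and the functional limit theorem transfers this to a positive lower density of shifts $\tau$, which is exactly \eqref{1-3}.

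The main obstacle is, bluntly, steps (b) and (c): for a genuinely arbitrary Dirichlet series that is merely meromorphically continuable, there is at present no second-moment bound reaching below the abscissa of absolute convergence and no substitute for a prime number theorem for its coefficients, so the positive density method has nothing to feed on. Current technology can verify these inputs only for structured families --- the Selberg class under a density hypothesis, the Steuding class, automorphic and Rankin--Selberg $L$-functions, periodic zeta-functions, and so on --- which is exactly the cumulative progress surveyed in Sections \ref{sec-3}--\ref{sec-5}. A proof of the conjecture in any reasonable corrected form therefore seems to require either a new source of arithmetic information about the coefficients of general Dirichlet series, or a fundamentally non-arithmetic route to the support computation; barring that, the realistic programme is the incremental one of steadily enlarging the class of series for which hypotheses (a)--(c) can be established.
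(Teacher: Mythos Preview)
This statement is a \emph{conjecture}, not a theorem; the paper offers no proof, and indeed immediately after stating it (Remark~\ref{rem5-1}) points out that it is literally false, giving the example $a_n=1$ for $n$ a power of $2$ and $a_n=0$ otherwise, whose Dirichlet series continues to $(2^s-1)^{-1}$ and is plainly not universal. Your write-up correctly recognizes all of this: you identify the statement as open and false as stated, and your counter-example (coefficients supported on powers of a single integer $k$, with $(1-k^{-s})^{-1}$ as the explicit instance) is exactly the paper's example with $k=2$, together with a clean explanation of \emph{why} it fails --- the values on any vertical line trace a one-dimensional curve.

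Beyond that point your proposal goes well past what the paper does. The paper says only that ``some additional condition should be added'' and leaves it there; you propose concrete candidate hypotheses (polynomial growth, second-moment bound, an independence/positive-density condition on coefficients) and sketch how Bagchi's scheme --- mean-square approximation by truncations, a functional limit theorem via Kronecker--Weyl, identification of the support of the limit measure by a Pecherski\u\i-type rearrangement --- would run under them. This is a reasonable and well-informed research outline, consistent with the methods surveyed in Sections~\ref{sec-3}--\ref{sec-5}, and your diagnosis of the obstacle (no second-moment bound or prime-number-theorem substitute for a generic Dirichlet series) is accurate. None of this is in the paper, but none of it is wrong either; it is simply commentary rather than a proof, which is the appropriate response to an open conjecture.
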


\begin{rem}\label{rem5-1}
Actually this conjecture has trivial counter-examples.   For example, let
$a_n=1$ if $n$ is a power of 2 and $a_n=0$ otherwise.   The series 
$\sum_{n=1}^{\infty}a_n n^{-s}$ can be continued to $(2^s-1)^{-1}$, which is
obviously not universal.   Therefore some additional condition should be
added to make the rigorous statement of the above conjecture.
\end{rem}

\section{The joint strong universality}\label{sec-6}

The joint universality property is also possible to be valid among zeta-functions
without Euler products.   The first attempt to this direction is a series of
papers of Laurin{\v c}ikas and the author 
\cite{LauMat00}\cite{LauMat06}\cite{LauMat07} on the joint universality for Lerch
zeta-functions.   Here, a matrix condition again appears.
Let $\lambda_1,\ldots,\lambda_r$ be rational numbers.   Write
$\lambda_j=a_j/q_j$, $(a_j,q_j)=1$, and let $k$ be the least
common multiple of $q_1,\ldots,q_r$.   Define the matrix 
$L=(\exp(2l\pi i\lambda_j))_{1\leq l\leq k,1\leq j\leq r}$.   Then,
by virtue of a variant of the positive density method, we have

\begin{thm}\label{thm6-1}
{\rm (Laurin{\v c}ikas and Matsumoto \cite{LauMat00} \cite{LauMat06})}
Suppose that $\alpha_1,\ldots,\alpha_r$ are algebraically independent over
$\mathbb{Q}$, and that ${\rm rank}(L)=r$.   Then the joint strong universality
holds for $\zeta(s,\alpha_1,\lambda_1),\ldots,\zeta(s,\alpha_r,\lambda_r)$
in the region $D(1/2,1)$\footnote{
In \cite{LauMat00}, the theorem is stated under the 
weaker assumption that $\alpha_1,\ldots,\alpha_r$ are transcendental, but 
(as is pointed
out in \cite{LauMat06}) this assumption should be replaced by the algebraic
independence over $\mathbb{Q}$.}
.
\end{thm}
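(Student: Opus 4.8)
The plan is to follow the Bagchi-type probabilistic scheme, upgraded with the positive density method, exactly as in the single-function case, but carried out simultaneously over the $r$ functions. First I would set up the relevant function space $H(D(1/2,1))^r$ and, for $\omega=(\omega(p))_p$ ranging over the infinite torus $\Omega=\prod_p \{|z|=1\}$ (together with an extra coordinate torus indexed by $n\in\mathbb{N}_0$ coming from the shifts $(n+\alpha_j)^{-s}$, since the Lerch function has no Euler product), consider the random Dirichlet series $\zeta(s,\alpha_j,\lambda_j;\omega)$ defined by inserting the random factors. The hypothesis that $\alpha_1,\dots,\alpha_r$ are algebraically independent over $\mathbb{Q}$ guarantees that the combined system $\{\log(n+\alpha_j): n\in\mathbb{N}_0,\ 1\le j\le r\}$ is linearly independent over $\mathbb{Q}$, which is what makes the relevant one-parameter flow $\tau\mapsto((n+\alpha_j)^{-i\tau})$ uniformly distributed on the torus; this is the replacement for the Euler-product/Kronecker-Weyl input used in Theorem~\ref{thm1-1}. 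The functional limit theorem then says that $\tau\mapsto(\zeta(s+i\tau,\alpha_1,\lambda_1),\dots)$ converges in distribution to the random element $(\zeta(s,\alpha_1,\lambda_1;\omega),\dots,\zeta(s,\alpha_r,\lambda_r;\omega))$.

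Next I would identify the support of this limiting distribution. By Bagchi's argument, reducing as usual to the support of the random series $\sum_n a_{n,j}(\omega)(n+\alpha_j)^{-s}$, the task is to show that suitable partial sums can be steered, by choosing $\omega$, to approximate an arbitrary target $(f_1,\dots,f_r)\in\prod_j H^c(K_j)$ (no non-vanishing needed, since we never take logarithms — this is the source of \emph{strong} universality). Here the positive density method enters: using \eqref{3-1} together with the density information on the primes lying in the arithmetic progressions relevant to the rational parameters $\lambda_j=a_j/q_j$, one produces a set of primes of positive density on which the relevant rearrangement (Pecherski{\u\i}-type, or the Hilbert-space denseness lemma) can be run to hit the target. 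The matrix condition ${\rm rank}(L)=r$, with $L=(\exp(2l\pi i\lambda_j))_{1\le l\le k,\,1\le j\le r}$, is precisely what decouples the $r$ series: it ensures that the linear span of the "$\lambda$-twisted" building blocks, as $\omega$ varies, fills out the product space rather than a proper diagonal-type subspace, so that $f_1,\dots,f_r$ can be matched \emph{independently}. Once the support is shown to be all of $\prod_j H(D(1/2,1))$ restricted to $\prod_j H^c(K_j)$, the conclusion \eqref{4-1} follows from the portmanteau theorem applied to the open set $\{(g_j): \sup_{K_j}|g_j-f_j|<\varepsilon\}$, combined with Mergelyan's theorem (Remark~\ref{rem1-01}) to pass from polynomial targets to general $f_j\in H^c(K_j)$.

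The main obstacle is the support computation under the rank condition, i.e.\ proving that the positive-density rearrangement can be performed \emph{jointly}. For a single Lerch zeta-function the shifts $(n+\alpha)^{-s}$ already give enough "room" because $\{\log(n+\alpha)\}$ is $\mathbb{Q}$-linearly independent; in the joint setting one must check that the combined family remains independent \emph{and} that the contributions indexed by different $j$ do not conspire — this is where ${\rm rank}(L)=r$ is used, and where the subtlety flagged in the footnote arises (transcendence of the $\alpha_j$ alone is not enough; algebraic independence is genuinely needed so that no polynomial relation among the $\alpha_j$ can reintroduce a linear dependence after combining with the roots of unity $\exp(2l\pi i\lambda_j)$). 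Verifying the functional limit theorem in $\prod_j H(D(1/2,1))$ is by now routine (it mirrors \cite{Lau96}), as is the final Rouché-free portmanteau step, so the real work is the joint denseness of the support via the positive density method.
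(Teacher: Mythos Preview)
The paper is a survey and offers no proof of this theorem beyond the single remark that it follows ``by virtue of a variant of the positive density method,'' so there is little to compare against in detail. Your overall architecture --- a Bagchi-type functional limit theorem on $H(D(1/2,1))^r$, identification of the support of the limiting measure via a Hilbert-space denseness/rearrangement lemma, and Mergelyan to pass to general targets --- is correct and is indeed what is carried out in \cite{LauMat00}, \cite{LauMat06}. Your remark that algebraic independence of the $\alpha_j$ is what yields the $\mathbb{Q}$-linear independence of $\{\log(m+\alpha_j):m\in\mathbb{N}_0,\,1\le j\le r\}$ is also correct and is exactly the content of the footnote.

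There is, however, a genuine confusion in your implementation: primes play no role whatsoever here, since the Lerch zeta-function has no Euler product. The probability space is the torus indexed by $\mathbb{N}_0$ (or by the real numbers $m+\alpha_j$), \emph{not} $\prod_p\{|z|=1\}$ with an ``extra'' integer-indexed factor bolted on; and the reference to \eqref{3-1} and to ``primes lying in arithmetic progressions'' is misplaced. The ``variant of the positive density method'' in this setting refers to positive density of \emph{integers} $n$ in residue classes modulo $k$ (the least common multiple of the denominators $q_j$): the map $n\mapsto(e^{2\pi i\lambda_1 n},\ldots,e^{2\pi i\lambda_r n})$ is $k$-periodic, the rank condition ${\rm rank}(L)=r$ says precisely that the $k$ coefficient vectors so obtained span $\mathbb{C}^r$, and one then runs the denseness argument on each residue class (a set of integers of positive density) to decouple the $r$ targets. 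Once you replace the prime-indexed torus and \eqref{3-1} by the integer-indexed torus and the elementary density of residue classes, your sketch becomes the actual proof.
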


The Lerch zeta-function $\zeta(s,\alpha,\lambda)$ with rational $\lambda$ is a
special case of periodic Hurwitz zeta-functions.   The joint strong universality
of periodic Hurwitz zeta-functions was first studied by Laurin{\v c}ikas 
\cite{Lau06}\cite{Lau07}.    Let $\mathfrak{B}_j=\{a_{nj}\}_{n=0}^{\infty}$
be periodic sequences with period $k_j$, $k$ be the least common multiple of
$k_1,\ldots,k_r$, and define $B=(a_{jl})_{1\leq j\leq r,1\leq l\leq k}$.

\begin{thm}\label{thm6-2}
{\rm (Laurin{\v c}ikas \cite{Lau07})}
If $\alpha$ is transcendental and ${\rm rank}(B)=r$, then the joint strong
universality holds for 
$\zeta(s,\alpha,\mathfrak{B}_1),\ldots,\zeta(s,\alpha,\mathfrak{B}_r)$
in the region $D(1/2,1)$.
\end{thm}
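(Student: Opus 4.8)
The plan is to follow Bagchi's probabilistic strategy, adapted to the joint setting and to the absence of Euler products, exactly as in the single periodic Hurwitz case (Theorem \ref{thm5-1}, and Javtokas--Laurin{\v c}ikas) but with the extra bookkeeping needed to handle $r$ functions simultaneously. First I would set up the probabilistic model. Since $\alpha$ is transcendental, the numbers $\{\log(n+\alpha)\mid n\in\mathbb{N}_0\}$ are linearly independent over $\mathbb{Q}$; this is the crucial arithmetic input, replacing the linear independence of $\{\log p\}_p$ used in Section \ref{sec-1}. Let $\Omega=\prod_{n=0}^{\infty}\gamma_n$ where each $\gamma_n$ is the unit circle, equipped with the product Haar measure, and for $\omega=(\omega_n)\in\Omega$ define the random Dirichlet series $\zeta(s,\alpha,\mathfrak{B}_j,\omega)=\sum_{n=0}^{\infty}a_{nj}\omega_n(n+\alpha)^{-s}$. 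A functional limit theorem (of the type alluded to after the discussion of Bagchi in Section \ref{sec-2}) then says that the $H(D(1/2,1))^r$-valued random element $(\zeta(s,\alpha,\mathfrak{B}_1,\cdot),\dots,\zeta(s,\alpha,\mathfrak{B}_r,\cdot))$ has a distribution $P$ which is the weak limit, as $T\to\infty$, of the distributions of the shifts $\tau\mapsto(\zeta(s+i\tau,\alpha,\mathfrak{B}_1),\dots,\zeta(s+i\tau,\alpha,\mathfrak{B}_r))$. The linear independence of the $\log(n+\alpha)$ is what makes the one-parameter family $\{((n+\alpha)^{-i\tau})_n\mid\tau\in\mathbb{R}\}$ equidistributed in $\Omega$ (Kronecker--Weyl, Remark \ref{rem1-0}), giving this limit theorem.

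The heart of the argument is then to identify, or at least lower-bound, the support of $P$: I would show that the support contains all of $(H(D(1/2,1)))^r$ whenever $\mathrm{rank}(B)=r$. Granting this, the theorem follows quickly: given compact sets $K_j\subset D(1/2,1)$ with connected complements and $f_j\in H^c(K_j)$ (note: \emph{no} non-vanishing hypothesis, which is why this is a \emph{strong} universality statement and why the support must be the whole space rather than the non-vanishing part), Mergelyan's theorem (Remark \ref{rem1-01}) lets us approximate each $f_j$ uniformly on $K_j$ by a polynomial $p_j$; the polynomials lie in $H(D(1/2,1))$, hence in the support of $P$, so the open set $\{(g_1,\dots,g_r):\sup_{K_j}|g_j-p_j|<\varepsilon/2\ \forall j\}$ has positive $P$-measure; by the portmanteau theorem the corresponding set of $\tau$ has positive lower density, and the triangle inequality finishes it.

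To prove the support statement I would argue as in Bagchi: the independence of the coordinates $\omega_n$ lets one write $\zeta(s,\alpha,\mathfrak{B}_j,\omega)$ as a sum of independent $H(D(1/2,1))$-valued random variables, and then one invokes the function-space analogue of the classical theorem on rearrangements / on ranges of sums of independent vectors (Pecherski{\u\i}'s theorem, cited in Section \ref{sec-1}) to conclude that the set of possible sums is everything, provided a suitable ``denseness of partial Dirichlet polynomials'' condition holds. This last condition is exactly where $\mathrm{rank}(B)=r$ enters: by periodicity the relevant partial sums reorganize along arithmetic progressions modulo $k$, and to realize an \emph{arbitrary} $r$-tuple of target functions one must be able to solve, for the free phase parameters, a linear system whose coefficient matrix is built from the $(a_{jl})_{1\le j\le r,\,1\le l\le k}$; the rank-$r$ hypothesis is precisely the solvability condition, and is the analogue of the pairwise non-equivalence of characters in Theorem \ref{thm4-1} and of Laurin{\v c}ikas's matrix conditions elsewhere in Section \ref{sec-4}. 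I expect this step — verifying the joint denseness/completeness of the random Dirichlet polynomials attached to the $r$ sequences, i.e. that the rank condition genuinely suffices to decouple the $r$ functions in the support argument — to be the main obstacle; the functional limit theorem and the Mergelyan/Rouch{\'e}-style endgame are by now standard. One technical point worth care: since $\mathfrak{B}_j$ need not be multiplicative there is no Euler product and hence no need (and no room) for the positive density method here; the transcendence of $\alpha$ delivers outright linear independence, so the mean-value input reduces to a second-moment bound for the tails of $\sum a_{nj}(n+\alpha)^{-s}$ in $D(1/2,1)$, which follows from the boundedness of the periodic coefficients.
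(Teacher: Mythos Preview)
The paper is a survey and contains \emph{no proof} of Theorem~\ref{thm6-2}; it merely states the result with attribution to Laurin{\v c}ikas~\cite{Lau07}. So there is nothing in the paper itself to compare your proposal against.

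That said, your outline is the standard Bagchi--Laurin{\v c}ikas strategy and matches the architecture of the original proof in~\cite{Lau07}: (i) a joint functional limit theorem on $H(D(1/2,1))^r$, driven by the linear independence of $\{\log(n+\alpha)\}$ (transcendence of~$\alpha$); (ii) identification of the support of the limit measure $P$ with all of $H(D(1/2,1))^r$, which is where $\mathrm{rank}(B)=r$ enters; (iii) Mergelyan plus the portmanteau/lower-density argument. Your diagnosis that step~(ii) is the crux, and that the rank condition is exactly what lets one decouple the $r$ targets along residue classes mod~$k$, is correct.

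One small correction of emphasis: the support argument in the non-Euler-product setting does not really use Pecherski{\u\i}'s rearrangement theorem. What is used is the elementary fact that the support of a sum of independent $H(D)$-valued random elements is the closure of the Minkowski sum of the individual supports, together with a Hilbert-space denseness lemma (showing that the span of the vectors $(a_{nj}(n+\alpha)^{-s})_{1\le j\le r}$ is dense in the relevant Bergman-type space). Pecherski{\u\i} is the tool for the \emph{Euler-product} case, where one must hit a target while staying non-vanishing; here, precisely because you are proving \emph{strong} universality and need the full space $H(D)^r$, the argument is in fact simpler and linear-algebraic. Otherwise your sketch is sound.
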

 
Next in \cite{JavLau08}\cite{LauSke09}, the joint universality for
$\zeta(s,\alpha_j,\mathfrak{B}_j)$ ($1\leq j\leq r$) was discussed.   Some
matrix conditions were still assumed in \cite{JavLau08}, but finally
in \cite{LauSke09}, a joint universality theorem free from any matrix condition
was obtained.    

\begin{thm}\label{thm6-3}
{\rm (Laurin{\v c}ikas and Skerstonait{\.e} \cite{LauSke09})}
Assume that the elements of the set
\begin{align}\label{6-1}                                                                
\{\log(n+\alpha_j)\;|\;1\leq j\leq r,\; n\in\mathbb{N}_0\}                                  
\end{align}
are linearly independent over $\mathbb{Q}$.    Then the joint strong universality
holds for
$\zeta(s,\alpha_1,\mathfrak{B}_1),\ldots,\zeta(s,\alpha_r,\mathfrak{B}_r)$
in the region $D(1/2,1)$.
\end{thm}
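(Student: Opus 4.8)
The plan is to follow Bagchi's probabilistic framework, using the linear independence hypothesis \eqref{6-1} exactly where the classical proof uses linear independence of $\{\log p\}_p$ or of $\{\log(n+\alpha)\}_n$ for a single Hurwitz-type function. First I would set up the joint functional limit theorem. On the infinite-dimensional torus $\Omega=\prod_{n\in\mathbb{N}_0}\{z\in\mathbb{C}:|z|=1\}$, equipped with the probability Haar measure $m_H$, consider the $H(D(1/2,1))^r$-valued random element whose $j$-th component is $\zeta(s,\alpha_j,\mathfrak{B}_j,\omega)=\sum_{n=0}^{\infty} a_{nj}\,\omega_j(n)\,(n+\alpha_j)^{-s}$, where $\omega_j(n)$ denotes the coordinate of $\omega$ indexed by the pair $(j,n)$. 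The key analytic inputs are an absolute-mean bound of the form $\int_{-T}^{T}|\zeta(\sigma+it,\alpha_j,\mathfrak{B}_j)|^2\,dt=O(T)$ for $\sigma>1/2$, together with the corresponding bound for the random version; these, plus a truncation argument, show that $\frac{1}{T}\,\mathrm{meas}\{\tau\in[0,T]\}$ for the vector of shifts converges weakly to the distribution of this random element. The linear independence of the set \eqref{6-1} is precisely what makes the one-parameter family $(\omega\mapsto(\,(n+\alpha_j)^{-i\tau}\,)_{j,n})$ equidistributed on $\Omega$ by the Kronecker–Weyl theorem (Remark~\ref{rem1-0}), so no matrix condition of the earlier type is needed.

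Next I would identify the support of the limiting measure. The goal is to show that the support of the law of $(\zeta(s,\alpha_j,\mathfrak{B}_j,\omega))_{j=1}^r$ is all of $H(D(1/2,1))^r$ — this is the ``strong'' part, reflecting the absence of an Euler product, so no non-vanishing constraint appears. I would expand each component as a sum of independent $H(D(1/2,1))$-valued random variables indexed by $n$, and invoke the standard support theorem for sums of independent Banach-space-valued random elements: the support is the closure of the set of sums $\sum_n g_n$ with $g_n$ in the support of the $n$-th summand. The density statement then reduces to showing that finite linear combinations $\sum_{n\le N}\sum_{j} a_{nj}\theta_{nj}(n+\alpha_j)^{-s}$, with $|\theta_{nj}|=1$ chosen freely, are dense in $H(D(1/2,1))^r$. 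For this one uses a conditional-convergence / rearrangement argument in the spirit of Pecherski\u\i's theorem, together with a suitable version of the Mergelyan approximation theorem (Remark~\ref{rem1-01}) applied componentwise; the independence across $j$ is automatic because the coordinates $\omega_j(n)$ for distinct $j$ are genuinely distinct coordinates of $\Omega$.

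I would then close the argument by the usual soft step: given compact sets $K_j$ with connected complements, $f_j\in H^c(K_j)$, and $\varepsilon>0$, extend each $f_j$ (via Mergelyan) to an element of $H(D(1/2,1))$, observe that the set $G=\{(g_j)_j: \sup_{s\in K_j}|g_j(s)-f_j(s)|<\varepsilon/2\text{ for all }j\}$ is open and, by the density just established, has positive limiting measure; then the portmanteau inequality for weak convergence gives $\liminf_{T\to\infty}\frac1T\,\mathrm{meas}\{\tau\in[0,T]:(\zeta(s+i\tau,\alpha_j,\mathfrak{B}_j))_j\in G\}>0$, which is exactly \eqref{1-3} in its joint form over $D(1/2,1)$.

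The main obstacle, as in all strong-universality proofs without Euler products, is the density/support step: proving that the constrained combinations $\sum_n a_{nj}\theta_{nj}(n+\alpha_j)^{-s}$ can approximate an arbitrary target jointly in $j$. The periodic weights $a_{nj}$ may vanish for some residue classes $n\bmod k_j$, so one must check that enough summands survive to retain conditional convergence and full approximation power in each component; and one must be careful that the $r$ approximations can be arranged simultaneously rather than one at a time. The linear independence hypothesis \eqref{6-1} is what guarantees the joint equidistribution that decouples the $r$ problems, replacing the rank conditions of Theorems~\ref{thm6-1} and \ref{thm6-2}; verifying that this single hypothesis is genuinely sufficient — i.e.\ that no further arithmetic relation among the $a_{nj}$ can obstruct the joint approximation — is the delicate heart of the proof.
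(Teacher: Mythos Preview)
The paper is a survey and states Theorem~\ref{thm6-3} without proof, citing \cite{LauSke09}; there is no proof in the paper to compare against. Your outline is exactly the standard Bagchi-type scheme the survey describes as underlying results of this kind (functional limit theorem via Kronecker--Weyl equidistribution on the torus, identification of the support of the limit law as all of $H(D(1/2,1))^r$, then Mergelyan and the portmanteau inequality), and it matches the approach of the cited original paper.

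One notational slip: the torus must be indexed by pairs $(j,n)$, i.e.\ $\Omega=\prod_{j=1}^{r}\prod_{n\in\mathbb{N}_0}S^1$, not just by $n\in\mathbb{N}_0$; you write the latter but then correctly refer to coordinates $\omega_j(n)$, so make the definition consistent. Also, in the density step your displayed sum $\sum_{n\le N}\sum_j a_{nj}\theta_{nj}(n+\alpha_j)^{-s}$ reads as a scalar; what you need dense is the \emph{vector} whose $j$-th component is $\sum_n a_{nj}\theta_{nj}(n+\alpha_j)^{-s}$. Your diagnosis of the crux --- that the single hypothesis \eqref{6-1}, with no rank or matrix condition on the periodic data $\mathfrak{B}_j$, suffices because the coordinates $\omega_j(n)$ for different $j$ are genuinely independent and thus decouple the $r$ approximation problems --- is correct and is precisely the point of \cite{LauSke09} over the earlier Theorems~\ref{thm6-1} and~\ref{thm6-2}.
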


In the case $\mathfrak{B}_j=\{1\}_{n=0}^{\infty}$ for $1\leq j\leq r$ (that is,
the case of Hurwitz zeta-functions), the above result was already given in
Laurin{\v c}ikas \cite{Lau08b}.

In \cite{Lau08}\cite{LauSke09b}, a more general joint strong universality for
$\zeta(s,\alpha_j,\mathfrak{B}_{jl})$ ($1\leq j\leq r$, $1\leq l\leq l_j$ with
$l_j\in\mathbb{N}$) was discussed under certain matrix conditions.

Laurin{\v c}ikas \cite{Lau03b}\cite{Lau05} and \cite{GenLau04} (with Genys)
studied the joint strong universality for general Dirichlet series, under the same
assumptions as in Theorem \ref{thm5-3} and a certain matrix condition.

Now return to the problem of the joint universality for Lerch zeta-functions.
Theorem \ref{thm6-3} implies, especially, that the assumptions of 
Theorem \ref{thm6-1} can now be replaced
by just the linear independence of \eqref{6-1}.

Is the assumption \eqref{6-1} indeed weaker than the assumptions of Theorem 
\ref{thm6-1}?   The answer is yes, and the following result of Mishou \cite{Mis11b}
gives an example: Let $\alpha_1$, $\alpha_2$ be two transcendental numbers,
$0<\alpha_1,\alpha_2<1$, $\alpha_1\neq\alpha_2$, and 
$\alpha_2\in{\mathbb Q}(\alpha_1)$.   Then Mishou \cite{Mis11b} proved that
the joint strong universality holds for $\zeta(s,\alpha_1)$ and $\zeta(s,\alpha_2)$.
Dubickas \cite{Dub12} extended Mishou's result to the case of $r$ transcendental
numbers, which is also an extension of \cite{Lau08b}.

Let $m_1,m_2,m_3$ be relatively prime positive integers ($\geq 2$), and
$\lambda_0=n_3/m_3$ (with another integer $n_3$).   Nakamura \cite{Nak07} proved
the joint strong universality for
\begin{align}\label{6-2}
\left\{\left.\zeta\left(s,\alpha,\lambda_0+\frac{n_1}{m_1}+\frac{n_2}{m_2}\right)
\;\right|\;0\leq n_1<m_1,0\leq n_2<m_2\right\}
\end{align} 
when $\alpha$ is transcendental.   He pointed out that various other types of 
joint universality can be deduced from the above.

In \cite{LauMat07} and in Laurin{\v c}ikas \cite{Lau10}, the joint universality
of $\zeta(s,\alpha_j,\lambda_{j\mu_j})$ ($1\leq j\leq r$, $1\leq \mu_j\leq m_j$,
where $m_j$ is some positive integer) is discussed.
Write $\lambda_{j\mu_j}=a_{j\mu_j}/q_{j\mu_j}$, $(a_{j\mu_j},q_{j\mu_j})=1$, and
let $k_j$ be the least common multiple of $q_{j\mu_j}$ ($1\leq \mu_j\leq m_j$).   
Define
$L_j=(\exp(2l\pi i\lambda_{j\mu_j})_{1\leq l\leq k_j,1\leq \mu_j\leq m_j})$.
Then in \cite{Lau10} it is shown that if the elements of the set \eqref{6-1}
are linearly independent over $\mathbb{Q}$, and ${\rm rank}(L_j)=k_j$
($1\leq j\leq r$), then the joint strong universality holds for
$\zeta(s,\alpha_j,\lambda_{j\mu_j})$.

How about the joint universality for Lerch zeta-functions when the parameter
$\lambda$ is not rational?    
Nakamura \cite{Nak07} noted that the joint strong universality for \eqref{6-2}
also holds if we replace $\lambda_0$ by any non-rational real number.
Also, Nakamura \cite{Nak07b} extended
the idea in \cite{LauMat06} to obtain the following more general result.

\begin{thm}\label{thm6-4}
{\rm (Nakamura \cite{Nak07b})}
If $\alpha_1,\ldots,\alpha_r$ are algebraically independent over $\mathbb{Q}$, 
then for any real numbers $\lambda_1,\ldots,\lambda_r$ the joint strong universality
holds for $\zeta(s,\alpha_1,\lambda_1),\ldots,\zeta(s,\alpha_r,\lambda_r)$
in the region $D(1/2,1)$.
\end{thm}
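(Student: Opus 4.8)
The plan is to follow the probabilistic scheme of Bagchi, in the form developed in \cite{LauMat06}, whose method Nakamura extends. Fix compact sets $K_1,\dots,K_r\subset D(1/2,1)$ with connected complements and targets $f_j\in H^c(K_j)$, with \emph{no} non-vanishing assumption. I would view $\tau\mapsto\bigl(\zeta(s+i\tau,\alpha_1,\lambda_1),\dots,\zeta(s+i\tau,\alpha_r,\lambda_r)\bigr)$ as a process with values in $H(D(1/2,1))^r$, prove a joint functional limit theorem for it, identify the support of the limiting distribution as the whole space, and conclude by the usual Mergelyan-and-weak-convergence argument.

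First I would construct the limit random element. On the torus $\Omega=\prod_{j=1}^{r}\prod_{n=0}^{\infty}\{|z|=1\}$ with Haar measure $m$, writing $\omega=(\omega_j(n))_{j,n}$, set $\zeta(s,\omega_j,\alpha_j,\lambda_j)=\sum_{n=0}^{\infty}e^{2\pi i\lambda_j n}\omega_j(n)(n+\alpha_j)^{-s}$. The mean-square bounds for Lerch zeta-functions in $\sigma>1/2$ (polynomial growth in $t$ and bounded second moment) give tightness of the family of measures $P_T$ on $H(D(1/2,1))^r$ induced by the shifts $\tau\in[0,T]$, and almost sure convergence in $H(D(1/2,1))^r$ of the above random series, exactly as in \cite{LauMat06}. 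To identify the limit as $m$ I must apply the Kronecker--Weyl theorem (Remark \ref{rem1-0}) to the frequencies $\{\log(n+\alpha_j)\}$, which requires that the set $\{\log(n+\alpha_j)\mid 1\le j\le r,\ n\in\mathbb{N}_0\}$ be linearly independent over $\mathbb{Q}$. A putative rational relation exponentiates to an identity $\prod_{j=1}^{r}R_j(\alpha_j)=1$ with $R_j(x)=\prod_n(n+x)^{m_{jn}}\in\mathbb{Q}(x)$, $m_{jn}\in\mathbb{Z}$; since $\alpha_1,\dots,\alpha_r$ are algebraically independent this is a formal identity in the separated variables $x_1,\dots,x_r$, and unique factorization forces each $R_j$ to be constant, hence each $m_{jn}=0$. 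This is precisely where algebraic independence --- and not merely transcendence of each $\alpha_j$ individually --- is needed.

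The heart of the proof is the support computation: the distribution of $\bigl(\zeta(s,\omega_j,\alpha_j,\lambda_j)\bigr)_{j=1}^{r}$ under $m$ must have support equal to all of $H(D(1/2,1))^r$. Here the absence of an Euler product is what produces \emph{strong} universality: for a single $j$ one shows that the finite sums $\sum_{n\le N}e^{2\pi i\lambda_j n}\omega_j(n)(n+\alpha_j)^{-s}$, as $\omega_j$ ranges over its torus, are dense in $H(D(1/2,1))$, using that $\sum_n(n+\alpha_j)^{-\sigma}$ diverges throughout $\sigma\le1$ while $\sum_n(n+\alpha_j)^{-2\sigma}$ converges for $\sigma>1/2$, together with a Hilbert-space rearrangement theorem of Pecherski{\u\i} type (equivalently, the Bagchi lemma as in \cite{Lau96}); the unimodular factor $e^{2\pi i\lambda_j n}$ merely rotates the summands and so does not affect this denseness even when $\lambda_j$ is irrational. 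Because the $r$ blocks of torus coordinates are mutually $m$-independent, the support of the joint law is the product of the individual supports, hence all of $H(D(1/2,1))^r$, and no non-vanishing restriction on any $f_j$ ever enters.

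To finish, I would use Mergelyan's theorem (Remark \ref{rem1-01}) to choose polynomials $p_j$ with $\sup_{K_j}|f_j-p_j|<\varepsilon/2$; the set $G=\{(g_1,\dots,g_r)\in H(D(1/2,1))^r\mid\sup_{K_j}|g_j-p_j|<\varepsilon/2\ \ (1\le j\le r)\}$ is open and, by the support computation, has $m(G)>0$. The joint functional limit theorem and the open-set inequality for weak convergence then yield $\liminf_{T\to\infty}\frac{1}{T}\,{\rm meas}\{\tau\in[0,T]\mid(\zeta(s+i\tau,\alpha_j,\lambda_j))_j\in G\}\ge m(G)>0$, and $G$ is contained in the set in \eqref{4-1} with $\varphi_j=\zeta(\cdot,\alpha_j,\lambda_j)$, which gives the joint strong universality. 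The main obstacle is the support computation --- controlling, in the critical strip, the approximation by finite sums for each Lerch zeta-function simultaneously across all $j$, which is exactly where the algebraic-independence hypothesis must be invoked to guarantee that the limiting random functions are genuinely independent.
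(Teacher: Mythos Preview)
The paper is a survey and does not supply its own proof of this theorem; it merely records that Nakamura obtained it by extending the method of \cite{LauMat06}. Your proposal is precisely that extension: the Bagchi--Laurin{\v c}ikas probabilistic scheme (joint functional limit theorem on $H(D(1/2,1))^r$, support identification via a Pecherski{\u\i}-type denseness argument, and the Mergelyan/weak-convergence finish), with the key observation that algebraic independence of $\alpha_1,\dots,\alpha_r$ yields the $\mathbb{Q}$-linear independence of the full family $\{\log(n+\alpha_j)\}$ needed for the Kronecker--Weyl step. This matches what the paper indicates as the intended route, and your identification of where the hypothesis is used --- to force the formal identity $\prod_j R_j(\alpha_j)=1$ to collapse variable by variable --- is exactly the point that distinguishes this theorem from the earlier result requiring a matrix condition.
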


On the other hand, Mishou \cite{MisJMSJ} proved 
the following measure-theoretic result.

\begin{thm}\label{thm6-5}
{\rm (Mishou \cite{MisJMSJ})}
There exists a subset $\Lambda\subset [0,1)^r$ whose $r$-dimensional Lebesgue
measure is $1$, and for any transcendental real number $\alpha$ and
$(\lambda_1,\ldots,\lambda_r)\in\Lambda$, the joint strong universality holds
for $\zeta(s,\alpha,\lambda_1),\ldots,\zeta(s,\alpha,\lambda_r)$ in the region
$D(1/2,1)$.
\end{thm}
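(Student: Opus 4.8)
The target is a measure-theoretic statement: for almost all choices of the shift parameters $(\lambda_1,\ldots,\lambda_r)\in[0,1)^r$, and for a single fixed transcendental $\alpha$, the family $\zeta(s,\alpha,\lambda_1),\ldots,\zeta(s,\alpha,\lambda_r)$ is jointly strongly universal in $D(1/2,1)$. The natural strategy is Bagchi's probabilistic method adapted to the joint setting, so the plan is to reduce the universality statement to a support computation for a limiting random element, and then to show that the ``bad'' set of $(\lambda_1,\ldots,\lambda_r)$ for which the support is too small has measure zero. First I would set up the function space $H(D(1/2,1))^r$ and introduce, on the infinite torus $\Omega=\prod_{n\ge 0}\{|\omega_n|=1\}$ equipped with Haar measure, the $H(D(1/2,1))^r$-valued random element whose $j$-th component is $\sum_{n\ge 0} \omega_n\, e^{2\pi i\lambda_j n}(n+\alpha)^{-s}$. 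Because $\alpha$ is transcendental, the numbers $\{\log(n+\alpha)\}_{n\ge 0}$ are linearly independent over $\mathbb{Q}$, so the Kronecker--Weyl theorem (Remark \ref{rem1-0}) gives a functional limit theorem: $P_{T}$, the distribution of $\tau\mapsto(\zeta(s+i\tau,\alpha,\lambda_1),\ldots,\zeta(s+i\tau,\alpha,\lambda_r))$, converges weakly to the law of this random element. This part is by now standard and I would cite the functional limit theorem machinery of Laurin{\v c}ikas \cite{Lau96}.

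The second step is the support computation, which is where the measure-theoretic hypothesis enters. As in all strong universality proofs, the key is that each series $\sum_n \omega_n e^{2\pi i\lambda_j n}(n+\alpha)^{-s}$ is a sum of independent $H(D(1/2,1))$-valued random variables, and one wants to show that the support of the joint law is the full space $H(D(1/2,1))^r$ (then Mergelyan's theorem, Remark \ref{rem1-01}, and the Bagchi-type support argument finish the universality, with no non-vanishing restriction needed precisely because we do not pass to a logarithm). For a single Hurwitz-type function the support is all of $H(D(1/2,1))$; the subtlety in the joint case is ``independence'' of the $r$ functions, i.e. that there is no linear relation forcing the supports to be correlated. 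The plan is to show that the set $\Lambda$ of ``good'' $(\lambda_1,\ldots,\lambda_r)$ — those for which the relevant exponential sums $\{e^{2\pi i\lambda_j n}\}$ are, in the appropriate sense, jointly non-degenerate over the primes/integers — has full Lebesgue measure. Concretely I would identify a countable collection of ``degeneracy'' conditions (rational dependencies among the $\lambda_j$, or algebraic relations causing the conditional expectations to collapse), each cutting out a set of measure zero in $[0,1)^r$, and take $\Lambda$ to be the complement of their union.

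The main obstacle is exactly pinning down the right genericity condition on $(\lambda_1,\ldots,\lambda_r)$ and proving it holds almost everywhere. One cannot simply demand a matrix-rank condition as in Theorem \ref{thm6-2}, since those arise from rational $\lambda_j$ (a null set anyway); for irrational $\lambda_j$ the obstruction is more delicate and is genuinely about how the characters $n\mapsto e^{2\pi i\lambda_j n}$ interact with the multiplicative structure implicit in the Dirichlet series. I expect the argument to proceed by: (a) writing the would-be linear relation among translates of the $j$-th series, (b) using a mean-value / orthogonality argument over $\tau$ to convert it into a relation among the coefficients $e^{2\pi i\lambda_j n}(n+\alpha)^{-\sigma-it}$, and (c) showing that such a relation forces $(\lambda_1,\ldots,\lambda_r)$ into a proper analytic (hence measure-zero) subvariety of $[0,1)^r$ — the transcendence of $\alpha$ being used to rule out accidental cancellation. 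Once the support is shown to be everything for $(\lambda_1,\ldots,\lambda_r)\in\Lambda$, the passage from ``full support'' to the $\liminf$-positivity statement \eqref{1-2} is routine: cover the target $(f_1,\ldots,f_r)$ by Mergelyan, use the weak convergence of $P_T$ together with the fact that a neighborhood of the target has positive limiting measure, and invoke the portmanteau theorem. I would also remark that the set $\Lambda$ is independent of the particular transcendental $\alpha$, which is what makes the clean ``for any transcendental $\alpha$'' phrasing possible.
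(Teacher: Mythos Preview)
The paper is a survey and does not reproduce Mishou's proof, but it does give one concrete hint: immediately after the theorem it records that ``Mishou's proof is based on two classical discrepancy estimates due to W.~M.~Schmidt and H.~Niederreiter.'' That is the piece of information against which your plan should be measured.

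Your overall architecture --- Bagchi-type functional limit theorem on $H(D(1/2,1))^r$, support computation for the limit law, Mergelyan plus portmanteau to finish --- is the standard and correct scaffolding, and the use of the transcendence of $\alpha$ to get linear independence of $\{\log(n+\alpha)\}$ is exactly right. Where your plan diverges from Mishou, and where it is also weakest, is the identification of the full-measure set $\Lambda$. You propose to show that the ``bad'' $(\lambda_1,\ldots,\lambda_r)$ lie in a countable union of proper analytic subvarieties of $[0,1)^r$, coming from linear/algebraic degeneracies among the characters $n\mapsto e^{2\pi i\lambda_j n}$. This is vague at the decisive point: in the joint support lemma one has to produce, for every target $(g_1,\ldots,g_r)$, a choice of $(\omega_n)$ making the $r$ partial sums simultaneously close, and for that one needs \emph{quantitative} control of sums like $\sum_{n\le N} c_1 e^{2\pi i\lambda_1 n}+\cdots+c_r e^{2\pi i\lambda_r n}$ (or of the equidistribution of $(\{n\lambda_1\},\ldots,\{n\lambda_r\})$), uniformly in the $c_j$ and in $N$. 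Merely excluding a null set of algebraic relations does not by itself supply such uniform estimates.

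Mishou's route, as the paper indicates, is to invoke \emph{metric discrepancy theory}: Schmidt's theorem gives, for Lebesgue-almost all $\lambda$, a bound of the shape $D_N(\{n\lambda\})\ll N^{-1}(\log N)^{2+\varepsilon}$, and Niederreiter's results provide the multidimensional analogue for almost all $(\lambda_1,\ldots,\lambda_r)$. These almost-everywhere discrepancy bounds are precisely the quantitative input that replaces the matrix-rank condition of Theorem~\ref{thm6-2} in the irrational regime; they feed directly into the positive-density/denseness step of the support computation. So the set $\Lambda$ is not defined by excluding subvarieties but as the full-measure set on which the Schmidt--Niederreiter discrepancy bounds hold, and this set is visibly independent of $\alpha$, matching your final remark. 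Your plan would become a genuine proof if you replaced step~(c) by an appeal to these metric discrepancy estimates and then carried them through the denseness lemma; as written, step~(c) is a gap.
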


Moreover in the same paper Mishou gives the following two explicit descriptions of 
$\lambda_1,\ldots,\lambda_r$ ($0\leq\lambda_j<1$) for which the above joint 
universality holds;

(i) $\lambda_1,\ldots,\lambda_r$ are algebraic irrational and
$1,\lambda_1,\ldots,\lambda_r$ are linearly independent over $\mathbb{Q}$,

(ii) $\lambda_1=\exp(u_1),\ldots,\lambda_r=\exp(u_r)$ where $u_1,\ldots,u_r$ are
distinct rational numbers.

Mishou's proof is based on two classical discrepancy estimates due to
W. M. Schmidt and H. Niederreiter.    These results lead Mishou to propose the
following conjecture.

\begin{conj}\label{conj6-1}
{\rm (Mishou \cite{MisJMSJ})}
The joint strong universality holds
for $\zeta(s,\alpha,\lambda_1),\ldots,\zeta(s,\alpha,\lambda_r)$ in the region
$D(1/2,1)$, for any transcendental real number $\alpha$ $(0<\alpha<1)$ and any
distinct real numbers $\lambda_1,\ldots,\lambda_r$ $(0\leq\lambda_j<1)$.
\end{conj}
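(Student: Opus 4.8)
The plan is to follow the Bagchi--Laurin{\v c}ikas scheme and reduce the conjecture to a joint functional limit theorem together with a determination of the support of the limit measure. Fix the strip $D=D(1/2,1)$ and equip $H(D)^r$ with the product topology of uniform convergence on compacta. Let $\Omega=\prod_{n\in\mathbb{N}_0}\{z\in\mathbb{C}\;|\;|z|=1\}$ carry its Haar probability measure $m$, and on $\Omega$ define, for $1\le j\le r$,
\[
\zeta(s,\alpha,\lambda_j,\omega)=\sum_{n=0}^{\infty}\frac{\omega(n)\,e^{2\pi i\lambda_j n}}{(n+\alpha)^{s}},
\]
which for $m$-almost every $\omega$ converges to an element of $H(D)$. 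The essential structural point is that the \emph{same} $\omega$ drives all $r$ series, so the only mechanism that can make the components independent is the presence of the distinct additive phases $e^{2\pi i\lambda_j n}$. The goal is then: (i) show that the distribution of $\tau\mapsto(\zeta(s+i\tau,\alpha,\lambda_1),\dots,\zeta(s+i\tau,\alpha,\lambda_r))$ converges weakly, as $T\to\infty$, to the law $P$ of $\omega\mapsto(\zeta(s,\alpha,\lambda_1,\omega),\dots,\zeta(s,\alpha,\lambda_r,\omega))$; and (ii) prove that ${\rm supp}\,P=H(D)^{r}$. Given (i) and (ii), the conjecture follows by the standard argument: approximate each target $f_j\in H^c(K_j)$ by a polynomial via Mergelyan's theorem, observe that $(f_1,\dots,f_r)$ lies in the support, and pull a neighbourhood back through the limit theorem. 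Since no non-vanishing hypothesis is used anywhere, the universality produced is automatically strong.

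Step (i) should be essentially routine and parallel to the single-function and periodic Hurwitz cases. One approximates each series by a smoothly truncated absolutely convergent Dirichlet series, controls the truncation error in mean square using the classical bound $\int_0^T|\zeta(\sigma+it,\alpha,\lambda_j)|^2\,dt=O(T)$ for $1/2<\sigma<1$, and proves that the one-parameter flow $\tau\mapsto(\omega(n)(n+\alpha)^{-i\tau})_{n}$ on $\Omega$ is ergodic. By the usual Fourier-analytic argument on $\Omega$, ergodicity reduces to the linear independence over $\mathbb{Q}$ of $\{\log(n+\alpha)\;|\;n\in\mathbb{N}_0\}$, and this holds for every transcendental $\alpha$: a nontrivial integer relation $\sum_n q_n\log(n+\alpha)=0$ would yield the polynomial identity $\prod_n(n+\alpha)^{q_n}=1$, impossible for transcendental $\alpha$. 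The Birkhoff ergodic theorem together with a tightness argument then delivers the joint weak convergence.

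Step (ii) is the heart of the matter. Since $P$ is the law of the almost surely convergent vector series $\sum_n\omega(n)u_n$ with $u_n=(e^{2\pi i\lambda_1 n},\dots,e^{2\pi i\lambda_r n})(n+\alpha)^{-s}\in H(D)^{r}$ and $\omega(n)$ ranging over the unit circle --- \emph{one scalar per vector} --- the support is the closure of the set of such sums. In any Hilbert space $H_2(D_\sigma)^{r}$ with $1/2<\sigma$ one has $\sum_n\|u_n\|^2<\infty$ while $\sum_n\|u_n\|=\infty$, so a Pecherski{\u\i}-type rearrangement theorem gives density of $\{\sum_n\omega(n)u_n\}$ provided the $u_n$ are not asymptotically trapped in a proper closed subspace. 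Concretely, one must rule out every nonzero continuous linear functional $(\mu_1,\dots,\mu_r)$ for which the sequence $n\mapsto\sum_j\int e^{2\pi i\lambda_j n}(n+\alpha)^{-s}\,d\mu_j(s)$ is ``rigid''; equivalently, one needs a \emph{decoupling} of the additive characters $n\mapsto e^{2\pi i\lambda_j n}$ against the analytic weights $(n+\alpha)^{-s}$, for arbitrary distinct $\lambda_1,\dots,\lambda_r$.

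This decoupling, in full generality, is where I expect the real obstacle. When all the differences $\lambda_j-\lambda_k$ are rational the characters are periodic, and the requirement collapses to a finite discrete-Fourier-matrix rank condition which is automatically satisfied once the $\lambda_j$ are distinct; that case should go through, and is in substance the periodic Hurwitz joint result recorded as Theorem \ref{thm6-3}. When some $\lambda_j-\lambda_k$ is irrational, $n\mapsto e^{2\pi i(\lambda_j-\lambda_k)n}$ is equidistributed and \emph{morally} decouples, but carrying this through the rearrangement argument appears to need a quantitative input --- a discrepancy estimate for $(\lambda_1 n,\dots,\lambda_r n)\bmod 1$ --- and for badly approximable or Liouville-type $\lambda_j$ no such estimate is available. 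This is precisely why Mishou's unconditional results are restricted to families (algebraic irrationals $\lambda_j$ with $1,\lambda_1,\dots,\lambda_r$ linearly independent over $\mathbb{Q}$; or $\lambda_j=\exp(u_j)$ with distinct rational $u_j$) where classical discrepancy bounds of Schmidt or Niederreiter apply. To prove the conjecture as stated one would have to replace the quantitative equidistribution by a purely measure-theoretic argument: identify the closure of $\{(\lambda_1 n,\dots,\lambda_r n)\bmod 1\;|\;n\in\mathbb{N}_0\}$ as a finite union of rational translates of the subtorus determined by the $\mathbb{Q}$-linear relations among $1,\lambda_1,\dots,\lambda_r$, and then exploit the genuinely infinite-dimensional freedom in $\omega$ to show that even this possibly degenerate orbit closure suffices to fill $H(D)^{r}$. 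Making that last step work for \emph{every} tuple of distinct $\lambda_j$ --- in particular, handling the shared randomness without any Diophantine assumption --- is, I expect, the crux of the difficulty.
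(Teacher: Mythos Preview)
The statement you are addressing is a \emph{conjecture}, not a theorem: the paper presents it as an open problem proposed by Mishou, and offers no proof. There is therefore no proof in the paper to compare your proposal against. Your write-up is not a proof either, and to your credit you say so explicitly: you lay out the Bagchi--Laurin{\v c}ikas scheme, carry Step~(i) through (correctly, since the transcendence of $\alpha$ alone gives the linear independence of $\{\log(n+\alpha)\}$ needed for the ergodic/limit-theorem part), and then isolate the obstruction in Step~(ii).

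Your diagnosis of the gap is accurate and is exactly why the statement remains conjectural. The support computation must show that the closed span (in the Pecherski{\u\i} sense) of the vectors $u_n=(e^{2\pi i\lambda_1 n},\dots,e^{2\pi i\lambda_r n})(n+\alpha)^{-s}$ is all of $H(D)^r$, and since a \emph{single} scalar $\omega(n)$ multiplies the whole vector $u_n$, the only source of independence among the $r$ components is the family of characters $n\mapsto e^{2\pi i\lambda_j n}$. When the $\lambda_j$ are rational this reduces to a finite rank condition and recovers the periodic Hurwitz result (Theorem~\ref{thm6-3}); when some differences $\lambda_j-\lambda_k$ are irrational, the known arguments require quantitative equidistribution of $(\lambda_1 n,\dots,\lambda_r n)\bmod 1$, which is precisely what Mishou uses (via Schmidt's and Niederreiter's discrepancy bounds) to obtain Theorem~\ref{thm6-5} and the two explicit families listed after it. For arbitrary distinct real $\lambda_j$ no such bound is available, and your suggested replacement --- working only with the orbit closure in the subtorus determined by the $\mathbb{Q}$-relations among $1,\lambda_1,\dots,\lambda_r$ --- is a reasonable line of attack but, as you note, nobody has made it work. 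That is the genuine gap, and it is the content of the conjecture.
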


For single zeta or $L$-functions, there is Conjecture \ref{conj5-1}, which asserts
that universality would hold for any ``reasonable'' Dirichlet series.   
As for the joint universality, the situation is much more complicated.
If there is some relation among several Dirichlet series, then the behaviour of those
Dirichlet series cannot be independent of each other, so the joint universality
among them cannot be expected.   Nakamura \cite{Nak07b} pointed out that
some collections
of Lerch zeta-functions cannot be jointly universal, because of the inversion
formula among Lerch zeta-functions.
In the same paper Nakamura introduced the generalized Lerch zeta-function of
the form
\begin{align}\label{6-3}
\zeta(s,\alpha,\beta,\gamma,\lambda)=\sum_{n=0}^{\infty}\frac{e^{2\pi i\lambda n}}
{(n+\alpha)^{s-\gamma}(n+\beta)^{\gamma}},
\end{align}
and showed that, under suitable choices of parameters the joint strong
universality sometimes holds, and sometimes does not hold.

In \cite{Nak08}, Nakamura considered more general series
\begin{align}\label{6-4}
\zeta(s,\alpha,\mathcal{C})=\sum_{n=0}^{\infty}\frac{c(n)}{(n+\alpha)^s},
\end{align}  
where $\mathcal{C}=\{c(n)\}_{n=0}^{\infty}$ is a bounded sequence of complex
numbers, and using it, constructed some counter-examples to the joint universality.
The proof in \cite{Nak08} is based on a non-denseness property and a limit
theorem on a certain function space.

The above results of Nakamura suggest that it is not easy to find a suitable
joint version of Conjecture \ref{conj5-1}.

\begin{rem}\label{rem6-1}
It is to be noted that there is the following simple principle of producing the
joint universality.   Let $\varphi\in H(D(\sigma_1,\sigma_2))$, and 
assume that $\varphi$ is universal.   Let
$K$ be a compact subset of $D(\sigma_1,\sigma_2)$ with connected complement,
$\lambda_1,\ldots,\lambda_r$ be complex numbers, and
$K_j=\{s+\lambda_j\;|\;s\in K\}$ ($1\leq j\leq r$).   Assume these $K_j$'s are
disjoint.   Then $\varphi_j(s)=\varphi(s+\lambda_j)$ ($1\leq j\leq r$) are
jointly universal.   If $\varphi$ is strongly universal, then $\varphi_j$'s are
jointly strongly universal.   This is the {\it shifts universality principle}
of Kaczorowski, Laurin{\v c}ikas and J. Steuding \cite{KaczLauSte06}. 
\end{rem}

\section{The universality for multiple zeta-functions}\label{sec-7}

An important generalization of the notion of zeta-functions is multiple 
zeta-functions, defined by certain multiple sums.   The history of the theory of
multiple zeta-functions goes back to the days of Euler, but extensive studies
started only in 1990s.

The problem of searching for universality theorems on multiple zeta-functions
was first proposed by the author \cite{Mat02}.   In this paper the author wrote
that one accessible problem would be the universality of Barnes multiple
zeta-functions    
\begin{align*}
\sum_{n_1=0}^{\infty}\cdots\sum_{n_r=0}^{\infty}
(w_1 n_1 +\cdots+w_r n_r+\alpha)^{-s}
\end{align*}
(where $w_1,\ldots,w_r,\alpha$ are parameters).   Nakamura \cite{Nak07b} pointed
out that his $\zeta(s,\alpha,\beta,\gamma,\lambda)$ (see \eqref{6-3}) includes 
the twisted Barnes double zeta-function of the form
\begin{align*}
\sum_{n_1=0}^{\infty}\sum_{n_2=0}^{\infty}\frac{e^{2\pi i\lambda(n_1+n_2)}}
{(n_1+n_2+\alpha)^s}
\end{align*}
as a special case.   Therefore \cite{Nak07b} includes a study of universality for
Barnes double zeta-functions.   More generally, the series 
$\zeta(s,\alpha,\mathcal{C})$ (see \eqref{6-4}) studied
in Nakamura \cite{Nak08} includes twisted Barnes $r$-ple zeta-functions (for any $r$).

The Euler-Zagier $r$-ple sum is defined by
\begin{align}\label{7-1}
\sum_{n_1>n_2>\cdots n_r\geq 1}n_1^{-s_1}n_2^{-s_2}
\cdots n_r^{-s_r},
\end{align}  
where $s_1,\ldots,s_r$ are complex variables.   
Nakamura \cite{Nak09} considered the universality of the following
generalization of \eqref{7-1} of Hurwitz-type:
\begin{align}\label{7-2}                                                                
\lefteqn{\zeta_r(s_1,\ldots,s_r;\alpha_1,\ldots,\alpha_r)=}\\
&\sum_{n_1>n_2>\cdots n_r\geq 0}(n_1+\alpha_1)^{-s_1}(n_2+\alpha_2)^{-s_2}
\cdots (n_r+\alpha_r)^{-s_r},\notag
\end{align}
where $0<\alpha_j\leq 1$ ($1\leq j\leq r$).   Nakamura's results suggest that
universality for the multiple zeta-function \eqref{7-2} is connected with the
zero-free region.   One of his main results is as follows.

\begin{thm}\label{thm7-1}
{\rm (Nakamura \cite{Nak09})}
Let $\Re s_2>3/2$, $\Re s_j\geq 1$ $(3\leq j\leq r)$.   Assume $\alpha_1$ is
transcendental, and 
$\zeta_{r-1}(s_2,\ldots,s_r;\alpha_2,\ldots,\alpha_r)\neq 0$.   Then the strong
universality holds for $\zeta_r(s_1,\ldots,s_r;\alpha_1,\ldots,\alpha_r)$ as a 
function in $s_1$ in the region $D(1/2,1)$.
\end{thm}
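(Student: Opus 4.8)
The plan is to exhibit $\zeta_r$, viewed as a function of $s_1$ with $s_2,\dots,s_r$ held fixed, as a Dirichlet series whose coefficients converge to the nonzero number $L:=\zeta_{r-1}(s_2,\dots,s_r;\alpha_2,\dots,\alpha_r)$, to reduce matters to the strong universality of the Hurwitz zeta-function $\zeta(s_1,\alpha_1)$ (Theorem \ref{thm5-1}, which applies because $\alpha_1$ is transcendental), and then to run Bagchi's probabilistic argument. First I would write
\[
\zeta_r(s_1,\dots,s_r;\alpha_1,\dots,\alpha_r)=\sum_{m=0}^{\infty}c_m(m+\alpha_1)^{-s_1},\qquad
c_m=\sum_{m>n_2>\cdots>n_r\ge0}(n_2+\alpha_2)^{-s_2}\cdots(n_r+\alpha_r)^{-s_r}.
\]
Each $c_m$ is a finite sum, and $c_m\to L$ as $m\to\infty$. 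Using $\Re s_2>3/2$ together with $\Re s_j\ge1$ for $3\le j\le r$, a routine iterated tail estimate (the inner truncations contribute only powers of $\log m$) gives $c_m-L=O(m^{-1/2-\delta})$ for some $\delta>0$. Hence $R(s_1):=\sum_{m\ge0}(c_m-L)(m+\alpha_1)^{-s_1}$ converges absolutely, and is holomorphic and bounded, for $\Re s_1>2-\Re s_2$, a half-plane containing the closed strip $1/2\le\Re s_1\le1$; this yields the decomposition $\zeta_r(s_1,\dots)=L\,\zeta(s_1,\alpha_1)+R(s_1)$, which in turn provides the analytic continuation of $\zeta_r$ in $s_1$ to $D(1/2,1)$ and, since $\int_0^T|\zeta(\sigma+i\tau,\alpha_1)|^2\,d\tau\ll T$ for $1/2<\sigma<1$ and $R$ is bounded there, the mean-square bound $\int_0^T|\zeta_r(\sigma+i\tau,\dots)|^2\,d\tau\ll T$.

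Next I would set up the functional limit theorem in $H(D(1/2,1))$. Since $\alpha_1$ is transcendental, $\{\log(m+\alpha_1):m\in\mathbb{N}_0\}$ is linearly independent over $\mathbb{Q}$, so by the Kronecker--Weyl theorem (Remark \ref{rem1-0}) the family $\bigl((m+\alpha_1)^{-i\tau}\bigr)_{m\ge0}$ is uniformly distributed on the infinite-dimensional torus $\mathbb{T}^{\mathbb{N}_0}$ (a product of unit circles) as $\tau$ ranges over $[0,T]$, $T\to\infty$. Following Bagchi, one then shows that the random element $\tau\mapsto\zeta_r(\,\cdot+i\tau,\dots)$ of $H(D(1/2,1))$ converges in distribution, as $T\to\infty$, to $\zeta_r(s_1,\omega)=\sum_{m\ge0}c_m\,\omega(m)\,(m+\alpha_1)^{-s_1}$ with $\omega$ Haar-distributed on $\mathbb{T}^{\mathbb{N}_0}$; here the boundedness of the $c_m$ gives $\sum_m|c_m|^2(m+\alpha_1)^{-2\sigma}<\infty$ for $\sigma>1/2$, hence almost sure convergence of this series in $H(D(1/2,1))$, and the mean-square bound above controls the passage from a truncation to the full series.

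The decisive step, and the one I expect to be the main obstacle, is to identify the support of $\zeta_r(\,\cdot\,,\omega)$ with all of $H(D(1/2,1))$: this is exactly the \emph{strong} feature (no non-vanishing assumption on the target $f$), and it is where $L\neq0$ is used essentially. With $f_m(s):=c_m(m+\alpha_1)^{-s}$, the series $\sum_m\omega(m)f_m$ converges a.s.\ and, crucially, $\sum_m\max_{s\in K}|f_m(s)|=\sum_m|c_m|(m+\alpha_1)^{-\sigma_0}$ diverges, where $\sigma_0=\min_{s\in K}\Re s\in(1/2,1)$, precisely because $|c_m|\to|L|>0$ makes this sum comparable to $\sum_m(m+\alpha_1)^{-\sigma_0}=\infty$. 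By Bagchi's Hilbert-space/rearrangement argument, the same used for the Hurwitz zeta-function and relying on Mergelyan's theorem (Remark \ref{rem1-01}), this divergence, together with the a.s.\ convergence, forces the set of convergent sums $\sum_m a_m f_m$ with $|a_m|=1$ to be dense in $H(D(1/2,1))$, so the support is the whole space. Alternatively, one may write $\zeta_r(\,\cdot\,,\omega)=L\,\zeta(\,\cdot\,,\alpha_1,\omega)+R(\,\cdot\,,\omega)$ driven by the same $\omega$ and transfer the support statement directly from Theorem \ref{thm5-1}.

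Finally, for $f\in H^c(K)$ and $\varepsilon>0$, the set $\{g\in H(D(1/2,1)):\sup_{s\in K}|g(s)-f(s)|<\varepsilon\}$ is open and nonempty, hence of positive measure under the limiting distribution; combining this with the functional limit theorem (the open-set part of the portmanteau theorem) gives
\[
\liminf_{T\to\infty}\frac{1}{T}{\rm meas}\Bigl\{\tau\in[0,T]\ \Big|\ \sup_{s\in K}\bigl|\zeta_r(s+i\tau,s_2,\dots,s_r;\alpha_1,\dots,\alpha_r)-f(s)\bigr|<\varepsilon\Bigr\}>0,
\]
which is the strong universality in $s_1$. If instead $L=0$, the $c_m$ decay and the divergence $\sum_m\max_{s\in K}|f_m(s)|=\infty$ can fail, so both the argument and the conclusion break down --- which is why the hypothesis $\zeta_{r-1}(s_2,\dots,s_r;\alpha_2,\dots,\alpha_r)\neq0$ is imposed.
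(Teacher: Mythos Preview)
The paper is a survey and does not supply a proof of Theorem~\ref{thm7-1}; it merely states the result and cites Nakamura~\cite{Nak09}. So there is no ``paper's own proof'' to compare against here.

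That said, your proposal is correct and is exactly the expected line. The key observation---writing $\zeta_r$ as $\sum_{m\ge0}c_m(m+\alpha_1)^{-s_1}$ with $c_m\to L=\zeta_{r-1}(s_2,\dots,s_r;\alpha_2,\dots,\alpha_r)\ne0$---places the problem squarely in the framework of the series $\zeta(s,\alpha,\mathcal{C})$ of \eqref{6-4} that the survey attributes to Nakamura~\cite{Nak08}, and your tail estimate $c_m-L=O(m^{1-\Re s_2+\varepsilon})$ (whence absolute convergence of $R(s_1)$ for $\Re s_1>2-\Re s_2<1/2$) is precisely where the hypotheses $\Re s_2>3/2$, $\Re s_j\ge1$ enter. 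From there the decomposition $\zeta_r=L\,\zeta(s_1,\alpha_1)+R(s_1)$, the mean-square bound, Bagchi's functional limit theorem on $H(D(1/2,1))$ via the linear independence of $\{\log(m+\alpha_1)\}$ (transcendence of $\alpha_1$), and the support identification using $\sum_m|c_m|(m+\alpha_1)^{-\sigma_0}=\infty$ (which fails if $L=0$) are all standard and correctly invoked. This is almost certainly the route taken in~\cite{Nak09}.
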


In \cite{Nak11}, Nakamura considered a generalization of Tornheim's double sum
of Hurwitz-type and proved the strong universality for it.

\section{The mixed universality}\label{sec-8}

In Section \ref{sec-4} we discussed the joint universality among zeta or $L$-functions
with Euler products.    Then in Section \ref{sec-6} we considered the joint universality
for those without Euler products.    Is it possible to combine these two directions 
to obtain certain joint universality results between two (or more) zeta-functions,
one of which has Euler products and the other does not?    The first affirmative
answers are due to Sander and J. Steuding \cite{SanSte06}, and to Mishou \cite{Mis07}.
The work of Sander and J. Steuding will be discussed later in Section \ref{sec-15}.
Here we state Mishou's theorem.

\begin{thm}\label{thm8-1}
{\rm (Mishou \cite{Mis07})}
Let $K_1,K_2$ be compact subsets of $D(1/2,1)$ with connected complements,
and $f_1\in H_0^c(K_1)$, $f_2\in H^c(K_2)$.
Then, for any $\varepsilon>0$, we have
\begin{align}\label{8-1}
\liminf_{T\to\infty}\frac{1}{T}{\rm meas}\left\{\tau\in[0,T]\;\left|\;
\sup_{s\in K_1}|\zeta(s+i\tau)-f_1(s)|<\varepsilon,\right.\right.\\
\left.\sup_{s\in K_2}|\zeta(s+i\tau,\alpha)-f_2(s)|<\varepsilon\right\}>0,\notag
\end{align}
provided $\alpha$ is transcendental.
\end{thm}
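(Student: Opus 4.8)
The plan is to combine the two universality mechanisms — the Euler-product structure of $\zeta(s)$ and the linear-independence structure of $\zeta(s,\alpha)$ — inside a single probabilistic (Bagchi-style) framework. First I would pass to a common setting: enlarge the test sets to a single compact $K = K_1 \cup K_2$ (after a harmless translation we may assume $K_1, K_2$ lie in $D(1/2,1)$ with connected complement, and we want the pair to be approximated at the same shift $\tau$). Then I would introduce the relevant parameter space. For $\zeta(s)$ the natural object is $\Omega = \prod_p \{|z|=1\}$, a compact abelian group, and for $\zeta(s,\alpha)$ it is $\Omega_\alpha = \prod_{n\ge 0}\{|z|=1\}$ indexed by $n\in\mathbb{N}_0$; the crucial point is that because $\alpha$ is transcendental, the combined family $\{\log p\}_p \cup \{\log(n+\alpha)\}_{n\ge 0}$ is linearly independent over $\mathbb{Q}$. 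Hence by the Kronecker–Weyl theorem (Remark \ref{rem1-0}) the flow $\tau \mapsto (p^{-i\tau})_p,\ ((n+\alpha)^{-i\tau})_{n\ge 0}$ is uniformly distributed in the full product group $\Omega \times \Omega_\alpha$.

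Next I would set up the functional limit theorem on this product space. Define, for $\omega = (\omega(p))_p \in \Omega$ and $\omega' = (\omega'(n))_{n\ge 0} \in \Omega_\alpha$, the random elements
\begin{align*}
\zeta(s,\omega) = \prod_p \bigl(1 - \omega(p) p^{-s}\bigr)^{-1},
\qquad
\zeta(s,\alpha,\omega') = \sum_{n=0}^{\infty} \frac{\omega'(n)}{(n+\alpha)^s},
\end{align*}
both convergent almost surely and defining $H(D(1/2,1))$-valued random variables. The goal is a limit theorem stating that the $\mathbb{C}^2$-valued (really $H(D)^2$-valued) distribution of $\tau \mapsto (\zeta(s+i\tau), \zeta(s+i\tau,\alpha))$ converges weakly, as $T\to\infty$, to the law of $(\zeta(s,\omega), \zeta(s,\alpha,\omega'))$ with $(\omega,\omega')$ Haar-distributed on $\Omega\times\Omega_\alpha$. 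This is proved by the standard steps: a limit theorem for finite truncations via Kronecker–Weyl on the full product group, followed by an approximation argument in mean square — here one uses the mean-value estimate for $\zeta$ in $D(1/2,1)$ (the truncated-Euler-product approximation alluded to in Section \ref{sec-1}) for the first coordinate, and the analogous classical mean-value bound for $\zeta(s,\alpha)$ for the second.

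The final step is to identify the support of the limit measure and invoke the support argument. One must show that the set $\{(\zeta(s,\omega),\zeta(s,\alpha,\omega')) : (\omega,\omega')\in\Omega\times\Omega_\alpha\}$ is dense in $H_0^c(K_1)$-closure $\times\, H^c(K_2)$-closure in the appropriate topology; equivalently that the pair $(f_1,f_2)$ (with $\log f_1$ making sense since $f_1$ is non-vanishing, while $f_2$ is arbitrary) lies in the support. For the first coordinate this is Voronin's original argument via Pecherski\u\i's rearrangement theorem applied to the $\mathbb{Q}$-linearly independent system $\{\log p\}$; for the second it is the Bagchi–Gonek argument (Theorem \ref{thm5-1}) applied to $\{\log(n+\alpha)\}_{n\ge 0}$. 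The key new ingredient — and the step I expect to be the main obstacle — is that these two support arguments must run \emph{simultaneously} on a single probability space: because the combined exponent system $\{\log p\}_p \cup \{\log(n+\alpha)\}_{n\ge 0}$ is $\mathbb{Q}$-linearly independent, the two rearrangement-theorem constructions can be carried out on disjoint blocks of indices without interference, so the joint support is the product of the individual supports. Once density of the support is established, applying it to the target $(f_1,f_2)$ and translating the weak-convergence statement back into a lower-density statement for $\tau$ (using that the relevant $\varepsilon$-neighborhood is an open set whose boundary has measure zero) yields \eqref{8-1}.
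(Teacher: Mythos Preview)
Your proposal is correct and follows the same approach that the paper indicates: the survey does not give a full proof but singles out as the ``essential point'' precisely the fact you identified, namely the $\mathbb{Q}$-linear independence of the combined set $\{\log p : p\ \text{prime}\}\cup\{\log(n+\alpha) : n\in\mathbb{N}_0\}$ when $\alpha$ is transcendental, and your Bagchi-style outline (joint functional limit theorem on $\Omega\times\Omega_\alpha$, then support identification as a product of supports) is exactly how Mishou's argument is organized around that fact. One minor remark: the step where you ``enlarge the test sets to a single compact $K=K_1\cup K_2$'' is unnecessary and slightly misleading (the union need not have connected complement), but you do not actually use it --- the rest of your argument correctly works in the product space $H(D(1/2,1))^2$, which is the right setting.
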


This type of universality is now called the {\it mixed universality}.
The essential point of the proof of this theorem is the fact that the elements
of the set
$$
\{\log(n+\alpha)\;|\;n\in\mathbb{N}_0\}\cup\{\log p\;|\;p:{\rm prime}\}
$$
are linearly independent over $\mathbb{Q}$.

Mishou's theorem was generalized to the periodic case by Ka{\v c}inskait{\.e}
and Laurin{\v c}ikas \cite{KacLau11}\footnote{This paper was published in 2011, but   
was already completed in 2007.}.
Let $\mathfrak{A}$ be a multiplicative periodic sequence satisfying \eqref{3-0}
and $\mathfrak{B}$ a (not necessarily multiplicative) periodic sequence.   
They proved that if $\alpha$ is transcendental, then the mixed universality holds for
$\zeta(s,\mathfrak{A})$ and $\zeta(s,\alpha,\mathfrak{B})$
in the region $D(1/2,1)$.

Laurin{\v c}ikas \cite{Lau10b} proved a further generalization.   Let
$\mathfrak{A}_1,\ldots,\mathfrak{A}_{r_1}$ be multiplicative periodic
sequences (with inequalities similar to \eqref{3-0}) and 
$\mathfrak{B}_1,\ldots,\mathfrak{B}_{r_2}$ be periodic
sequences.   Then, under certain matrix conditions, the mixed universality for
$\zeta(s,\mathfrak{A}_{j_1})$ ($1\leq j_1\leq r_1$) and 
$\zeta(s,\alpha_{j_2},\mathfrak{B}_{j_2})$ ($1\leq j_2\leq r_2$) holds, 
provided $\alpha_1,\ldots,\alpha_{r_2}$ are algebraically independent over
$\mathbb{Q}$.

Now mixed universality theorems are known for many pairs of zeta or $L$-functions.

$\bullet$ The Riemann zeta-function and several periodic Hurwitz zeta-functions
(Genys, Macaitien{\.e}, Ra{\v c}kauskien{\.e} and {\v S}iau{\v c}i{\=u}nas
\cite{GenMacRacSia10}),

$\bullet$  The Riemann zeta-function and several Lerch zeta-functions
(Laurin{\v c}ikas and Macaitien{\.e} \cite{LauMac13}),

$\bullet$  Several Dirichlet $L$-functions and several Hurwitz zeta-functions
(Janulis and Laurin{\v c}ikas \cite{JanLau13}),

$\bullet$ Several Dirichlet $L$-functions and several periodic Hurwitz 
zeta-functions (Janulis, Laurin{\v c}ikas, Macaitien{\.e} and
{\v S}iau{\v c}i{\=u}nas \cite{JanLauMacSia12}),

$\bullet$ An automorphic $L$-function and several periodic Hurwitz zeta-functions
(Laurin{\v c}ikas, Macaitien{\.e} and
{\v S}iau{\v c}i{\=u}nas \cite{LauMacSia11}, Macaitien{\.e} \cite{Mac12},
Pocevi{\v c}ien{\.e} and {\v S}iau{\v c}i{\=u}nas \cite{PocSia}, 
Laurin{\v c}ikas and {\v S}iau{\v c}i{\=u}nas \cite{LauSia12}).

\section{The strong recurrence}\label{sec-9}

In Section \ref{sec-5} we mentioned that the strong universality implies the
existence of many zeros in the region where universality is valid 
(Theorem \ref{thm5-2}).   This immediately gives the following corollary:

\begin{cor}\label{cor9-1}
The Riemann zeta-function $\zeta(s)$ cannot be strongly universal
in the region $D(1/2,1)$.
\end{cor}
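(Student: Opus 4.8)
The plan is to derive this corollary by contradiction, combining Theorem~\ref{thm5-2} with the classical fact that $\zeta(s)$ has no zeros in the half-plane $\sigma>1$ together with the zero-density information available in the critical strip. Suppose, for contradiction, that $\zeta(s)$ were strongly universal in $D(1/2,1)$. Then Theorem~\ref{thm5-2} applies with $\varphi=\zeta$, $a=1/2$, $b=1$, and yields a positive constant $C$ such that
\begin{align*}
N(T;\sigma_1,\sigma_2;\zeta)\geq CT
\end{align*}
for every pair $\sigma_1,\sigma_2$ with $1/2<\sigma_1<\sigma_2<1$. In words: in every vertical sub-strip of $D(1/2,1)$, the number of zeros of $\zeta$ with imaginary part in $[0,T]$ grows at least linearly in $T$.

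The contradiction comes from the known upper bounds for the number of zeros of $\zeta(s)$ to the right of the critical line. First I would recall the classical zero-density estimate: for any fixed $\sigma_1>1/2$, the number of zeros $\rho$ of $\zeta$ with $\Re\rho\geq\sigma_1$ and $0\leq\Im\rho\leq T$ is $o(T)$ as $T\to\infty$ (in fact it is $O(T^{\theta})$ for some $\theta=\theta(\sigma_1)<1$, by the density theorems of Bohr--Landau, Carlson, and their refinements). Taking any admissible pair $1/2<\sigma_1<\sigma_2<1$, we then have $N(T;\sigma_1,\sigma_2;\zeta)\leq N(T;\sigma_1,1;\zeta)=o(T)$, which flatly contradicts the lower bound $CT$ obtained above. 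Hence no strongly universal behaviour for $\zeta$ in $D(1/2,1)$ is possible.

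The one point that deserves care — and which I expect is the only real content beyond invoking Theorem~\ref{thm5-2} — is making sure the zero-density input is quoted in a form strong enough to beat a \emph{linear} lower bound. A crude denseness statement à la Bohr--Courant is not enough; one genuinely needs that the count of zeros with $\Re\rho>\sigma_1$ is of smaller order than $T$, which is precisely the substance of the Bohr--Landau zero-density theorem. Once that is on the table, the argument is a one-line comparison of asymptotic orders, so no further computation is required. (It is worth remarking, parenthetically, that this corollary is of course consistent with the Riemann Hypothesis, which would put \emph{all} nontrivial zeros on $\sigma=1/2$ and hence give $N(T;\sigma_1,\sigma_2;\zeta)=0$ for every such sub-strip; but the weaker unconditional density bound already suffices for the proof.)
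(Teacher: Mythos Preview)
Your proof is correct and follows essentially the same route as the paper: assume strong universality, apply Theorem~\ref{thm5-2} to obtain $N(T;\sigma_1,\sigma_2;\zeta)\geq CT$, and contradict this with the classical zero-density estimate $N(T;\sigma_1,\sigma_2;\zeta)=o(T)$. The paper's version is terser, but the logical content is identical.
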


Because if $\zeta(s)$ is strongly universal, then by Theorem \ref{thm5-2} we
have $N(T;\sigma_1,\sigma_2;\zeta)\geq CT$ for $1/2<\sigma_1<\sigma_2<1$,
which contradicts with the known zero-density estimate
$N(T;\sigma_1,\sigma_2;\zeta)=o(T)$. 

The same conclusion can be shown for many other zeta or $L$-functions, for which
some suitable zero-density estimate is known; or, under the assumption of the
analogue of the Riemann hypothesis.

On the other hand, if the Riemann hypothesis is true, then $\zeta(s)$ has no zero
in the region $D(1/2,1)$.    Therefore we can choose $f(s)=\zeta(s)$ in
Theorem \ref{thm1-1} to obtain
\begin{align}\label{9-1}
\liminf_{T\to\infty}\frac{1}{T}{\rm meas}\left\{\tau\in[0,T]\;\left|\;
\sup_{s\in K}|\zeta(s+i\tau)-\zeta(s)|<\varepsilon\right.\right\}>0.
\end{align}

This is called the {\it strong recurrence} property of $\zeta(s)$.
Bagchi discovered that the converse implication is also true.

\begin{thm}\label{thm9-1}
{\rm (Bagchi \cite{Bag81})}
The Riemann hypothesis for $\zeta(s)$ is true if and only if \eqref{9-1} holds
in the region $D(1/2,1)$.
\end{thm}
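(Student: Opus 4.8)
The plan is to prove both implications. One direction is essentially already done in the text: if the Riemann hypothesis holds, then $\zeta(s)$ is holomorphic and non-vanishing throughout $D(1/2,1)$, so on any compact $K\subset D(1/2,1)$ with connected complement we have $\zeta\in H_0^c(K)$, and choosing $f=\zeta$ in Theorem \ref{thm1-1} immediately yields \eqref{9-1}. So the real content is the converse: \eqref{9-1} $\Rightarrow$ RH. I would argue by contraposition, assuming $\zeta$ has a zero $\rho_0=\beta_0+i\gamma_0$ with $\beta_0>1/2$, and derive that \eqref{9-1} must fail for some compact $K$ and some $\varepsilon>0$.

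The key idea is an argument-principle / Rouch\'e-type counting, run \emph{in the wrong direction} compared to Theorem \ref{thm5-2}. First I would fix a small disc $K=\{s:|s-\rho_0|\le\delta\}\subset D(1/2,1)$ so small that $\rho_0$ is the only zero of $\zeta$ in $K$ and $\zeta$ is non-zero on $\partial K$; let $m\ge 1$ be the multiplicity. Set $\mu=\min_{s\in\partial K}|\zeta(s)|>0$ and choose $\varepsilon<\mu$. Now suppose for contradiction that \eqref{9-1} holds for this $K$ and $\varepsilon$; then there is a set of $\tau$ of positive lower density with $\sup_{s\in K}|\zeta(s+i\tau)-\zeta(s)|<\varepsilon<\mu\le\min_{s\in\partial K}|\zeta(s)|$, so by Rouch\'e's theorem $\zeta(s+i\tau)$ has exactly $m$ zeros (with multiplicity) in the interior of $K$, i.e. $\zeta$ has $\ge m$ zeros in the disc $|s-(\rho_0+i\tau)|<\delta$. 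These discs, taken over a positive-lower-density set of $\tau$ (which one can thin to a $\delta$-separated subset still of positive lower density in the $\tau$-variable), force $N(T;\beta_0-\delta,\beta_0+\delta;\zeta)\gg T$. This contradicts the classical zero-density estimate $N(T;\sigma_1,\sigma_2;\zeta)=o(T)$ for fixed $1/2<\sigma_1<\sigma_2<1$ (the same input used for Corollary \ref{cor9-1}), provided $\delta$ is chosen small enough that $[\beta_0-\delta,\beta_0+\delta]\subset(1/2,1)$. Hence \eqref{9-1} cannot hold, completing the contrapositive.

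The step I expect to require the most care is the passage from ``positive lower density of good $\tau$'' to ``$\gg T$ genuinely distinct zeros in a horizontal strip'': one must ensure the zero boxes $|s-(\rho_0+i\tau)|<\delta$ attached to different $\tau$'s do not all detect the same few zeros of $\zeta$. This is handled by extracting from the good $\tau$-set a subset that is $2\delta$-separated in $\tau$ (possible with only a constant-factor loss in density, since the good set has positive \emph{lower} density in $[0,T]$ for all large $T$), so the corresponding discs are pairwise disjoint and each contributes its own zero. A secondary point is purely bookkeeping: verifying that the chosen disc $K$ indeed has connected complement (a disc does) so that Theorem \ref{thm1-1}/the strong recurrence hypothesis \eqref{9-1} applies to it. With these two points pinned down, the equivalence follows, and one notes the argument is robust: it works verbatim for any $L$-function satisfying an $o(T)$ zero-density bound in $D(1/2,1)$, which is why the same circle of ideas underlies Corollary \ref{cor9-1}.
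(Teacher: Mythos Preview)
Your argument is correct. Note, however, that this survey does not actually supply a proof of the converse implication in Theorem~\ref{thm9-1}; it merely attributes the result to Bagchi~\cite{Bag81} after sketching the forward direction (RH $\Rightarrow$ \eqref{9-1}) exactly as you do. So there is no ``paper's own proof'' of the hard direction to compare against.

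That said, your contrapositive argument for \eqref{9-1} $\Rightarrow$ RH is the standard one and is entirely in the spirit of the paper: it recycles precisely the Rouch\'e mechanism from the proof of Theorem~\ref{thm5-2} (with $f=\zeta$ in place of $f(s)=s-\sigma_0$) together with the zero-density input $N(T;\sigma_1,\sigma_2;\zeta)=o(T)$ invoked just before Corollary~\ref{cor9-1}. Two small remarks. First, the thinning step can be phrased more cleanly as a covering argument: each zero $\rho$ of $\zeta$ in the strip $\beta_0-\delta<\sigma<\beta_0+\delta$ can be detected only by those $\tau$ with $|\tau-(\Im\rho-\gamma_0)|<\delta$, hence by a $\tau$-set of measure at most $2\delta$; since the good $\tau$-set has measure $\ge cT$, at least $cT/(2\delta)$ distinct zeros are needed, which already contradicts $o(T)$. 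This avoids extracting a separated subset explicitly. Second, you should record that $\beta_0<1$ (no zeros on $\sigma=1$), so that the disc $K$ can indeed be placed inside $D(1/2,1)$.
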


Bagchi himself extended this result to the case of Dirichlet $L$-functions in
\cite{Bag82}\cite{Bag87}.   The same type of result in terms of Beurling
zeta-functions was given by R. Steuding \cite{RSte06}.

\begin{rem}\label{rem9-1}
It is obvious that the notion of the strong recurrence is closely connected with
the notion of almost periodicity.   Bohr \cite{Boh22} proved that the 
Riemann hypothesis for $L(s,\chi)$ with a non-principal character $\chi$ is
equivalent to the almost periodicity of $L(s,\chi)$ in the region $\Re s>1/2$.
Recently Mauclaire \cite{Mau07} \cite{Mau09} studied the universality in a
general framework from the 
viewpoint of almost periodicity.
\end{rem}

Nakamura \cite{Nak09b} proved that, if $d_1=1,d_2,\ldots,d_r$ are algebraic real
numbers which are linearly independent over $\mathbb{Q}$, then the joint
universality of the form
\begin{align}\label{9-2}                                                                
\liminf_{T\to\infty}\frac{1}{T}{\rm meas}\left\{\tau\in[0,T]\;\left|\;                  
\sup_{s\in K_j}|\zeta(s+id_j\tau)-f_j(s)|<\varepsilon\right.\right.\\
 \Biggl.(1\leq j\leq r)\Biggr\}>0\notag
\end{align}
holds, where $K_j$ are compact subsets of $D(1/2,1)$ with connected complement
and $f_j\in H_0^c(K_j)$.   A key point of Nakamura's proof is the fact that the
elements of the set 
$\{\log p^{d_j}\;|\;p:{\rm prime},1\leq j\leq r\}$ are linearly independent over
$\mathbb{Q}$, which follows from Baker's theorem in transcendental number theory. 
From the above result it is immediate that
\begin{align}\label{9-3}                                                                
\liminf_{T\to\infty}\frac{1}{T}{\rm meas}\left\{\tau\in[0,T]\;\left|\;                  
\sup_{s\in K}|\zeta(s+i\tau)-\zeta(s+id\tau)|<\varepsilon\right.\right\}>0
\end{align}
holds if $d$ is algebraic irrational.
Nakamura also proved in the same paper that \eqref{9-3} is valid for almost all
$d\in\mathbb{R}$.   (Note that \eqref{9-3} for $d=0$ is \eqref{9-1}, hence the
Riemann hypothesis.)

Nakamura's paper sparked off the interest in this direction of research; 
Pa{\'n}kowski \cite{Pan09} proved that \eqref{9-3} holds for all (algebraic and
transcendental) irrational $d$, using the six exponentials theorem in
transcendental number theory.   On the other hand, Garunk{\v s}tis \cite{Gar11} and
Nakamura \cite{Nak10}, independently, claimed that \eqref{9-3} holds for all
non-zero rational.   However their arguments included a gap, which was partially
filled by Nakamura and Pa{\'n}kowski \cite{NakPan12}.   The present situation is:

\begin{thm}\label{thm9-2}
{\rm (Garunk{\v s}tis, Nakamura, Pa{\'n}kowski)}
The inequality \eqref{9-3} holds if $d$ is irrational, or $d=a/b$ is non-zero
rational with $(a,b)=1$, $|a-b|\neq 1$.
\end{thm}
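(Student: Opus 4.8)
The plan is to deduce \eqref{9-3} from a joint universality statement with independently scaled shifts, exactly as Nakamura deduced his result from \eqref{9-2}, but now allowing the scaling factor $d$ to be rational. First I would reduce the problem to showing that, for a compact $K\subset D(1/2,1)$ with connected complement and the target pair $f_1=f_2=\zeta$ (which is admissible by the Riemann hypothesis \emph{only} for $d=0$, so one must instead use an \emph{arbitrary} non-vanishing target and then specialize), the two shifted functions $\zeta(s+i\tau)$ and $\zeta(s+id\tau)$ can be simultaneously approximated. When $d$ is irrational this is already covered by the earlier arguments of Nakamura \cite{Nak09b} and Pa{\'n}kowski \cite{Pan09}, since the relevant logarithms $\{\log p,\ d\log p\}_p$ are linearly independent over $\mathbb{Q}$ by the six exponentials theorem. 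So the real content is the rational case $d=a/b$ with $(a,b)=1$.

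For $d=a/b$, the set $\{\log p,\ (a/b)\log p\}_p$ is \emph{not} linearly independent over $\mathbb{Q}$, so one cannot apply the Kronecker--Weyl theorem (Remark \ref{rem1-0}) naively; this is where the hypothesis $|a-b|\neq 1$ enters. The key step is to pass, via the substitution $\tau\mapsto b\tau$, to the pair $\zeta(s+ib\tau)$ and $\zeta(s+ia\tau)$, and then to analyze the joint distribution of the vector $(\{b\tau\log p\}_p,\ \{a\tau\log p\}_p)$ on the infinite-dimensional torus. The closure of this orbit is the subtorus cut out by the relations coming from rational linear dependencies among $\{b\log p\}_p\cup\{a\log p\}_p$; since $\log p$ are themselves $\mathbb{Q}$-linearly independent, the only relations are $b\cdot(a\log p)-a\cdot(b\log p)=0$ for each $p$. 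The orbit closure is therefore the ``diagonal-type'' subtorus $\{(\theta_p,\psi_p): b\psi_p=a\theta_p \text{ in }\mathbb{R}/\mathbb{Z}\}$, which is the image of the torus under $x_p\mapsto(bx_p,ax_p)$. One must show that the Euler-product approximations of $\zeta(s+ib\tau)$ and $\zeta(s+ia\tau)$, restricted to this subtorus, can still jointly hit any prescribed pair of non-vanishing holomorphic functions; the extra freedom needed, beyond the $\mathbb{Q}$-independence that fails, is supplied by the condition $|a-b|\neq 1$, which guarantees that the two ``scaled copies'' $p\mapsto p^{b}$ and $p\mapsto p^{a}$ of the prime set remain ``independent enough'' — concretely, one uses the Pecherski\u{\i}/rearrangement or conditional-convergence machinery (as in the original Voronin-type proof) on the subtorus, where the relevant series of summands indexed by $p$ still fails to be absolutely convergent in a way compatible with both projections simultaneously.

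I expect the main obstacle to be precisely the bookkeeping on the constrained torus: ruling out the degenerate possibilities. When $|a-b|=1$ (the excluded case, which is genuinely different), the relation $p^b = p\cdot p^{b-1}$ with $b-1=\pm a$ (after reindexing) creates an exact multiplicative coincidence between the two Euler products, so that $\zeta(s+ib\tau)$ and $\zeta(s+ia\tau)$ become ``locally proportional'' and the target functions cannot be chosen independently — this is the gap in the earlier papers of Garunk{\v s}tis \cite{Gar11} and Nakamura \cite{Nak10}, partially repaired in \cite{NakPan12}. So the proof must quantify, via the $|a-b|\neq 1$ hypothesis, that no such coincidence occurs, and then run the Mergelyan-approximation plus conditional-rearrangement argument on the orbit-closure subtorus. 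Once the joint-approximation statement on the subtorus is in hand, specializing both targets to $\zeta$ and undoing the $\tau\mapsto b\tau$ rescaling (which only multiplies the density bound by $1/b>0$) yields \eqref{9-3}, completing the proof.
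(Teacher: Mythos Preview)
The paper is a \emph{survey} and contains no proof of Theorem~\ref{thm9-2}; it merely records the statement and attributes the irrational case to \cite{Nak09b}, \cite{Pan09} and the (partial) rational case to \cite{Gar11}, \cite{Nak10}, \cite{NakPan12}. So there is no ``paper's own proof'' to compare your proposal against.

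That said, your outline is in the right spirit --- reduce \eqref{9-3} to a joint-approximation statement for the pair $\zeta(s+i\tau),\,\zeta(s+id\tau)$ with a \emph{common} non-vanishing target $f$, then conclude by the triangle inequality --- and the irrational case is exactly as you describe. Two cautions on the rational case. First, your heuristic for why $|a-b|=1$ is obstructive (``$p^b=p\cdot p^{b-1}$ with $b-1=\pm a$ creates a multiplicative coincidence between the two Euler products'') is not the actual mechanism; the genuine issue, as the paper itself notes just after the theorem, is that the argument of \cite{Gar11}, \cite{Nak10} is valid for $\log\zeta$ (giving \eqref{9-4} for all non-zero $d$) but the passage from $\log\zeta$ to $\zeta$ via exponentiation is where the $|a-b|=1$ case breaks down in \cite{NakPan12}. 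Second, you should not expect to obtain \emph{full} joint universality \eqref{9-2} on the constrained subtorus for rational $d$ --- the orbit closure is too small for that --- but only the weaker diagonal statement (same target in both slots), which is all that \eqref{9-3} requires. If you intend to write out the argument, you will need to consult \cite{NakPan12} for the precise place where $|a-b|\neq 1$ is used; your proposal does not yet identify it.
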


See also Mauclaire \cite{Mau13}, and Nakamura and Pa{\'n}kowski \cite{NakPan13}.
It is to be noted that the argument of Garunk{\v s}tis \cite{Gar11} and
Nakamura \cite{Nak10} is correct for $\log\zeta(s)$, so
\begin{align}\label{9-4}                                                               
\liminf_{T\to\infty}\frac{1}{T}{\rm meas}\left\{\tau\in[0,T]\;\left|\;                  
\sup_{s\in K}|\log\zeta(s+i\tau)-\log\zeta(s+id\tau)|<\varepsilon\right.\right\}>0      
\end{align}
can be shown for any non-zero $d\in\mathbb{R}$.   If \eqref{9-4} would be valid for
$d=0$, it would imply the Riemann hypothesis.

The strong recurrence property can be shown for more general zeta and $L$-functions.
Some of the aforementioned papers actually consider not only the Riemann
zeta-function, but also Dirichlet $L$-functions.   A generalization to a subclass
of the Selberg class was discussed by Nakamura \cite{Nak11b}.  The case of
Hurwitz zeta-functions has been studied by Garunk{\v s}tis and Karikovas
\cite{GarKarpre} and Karikovas and Pa{\'n}kowski \cite{KarPanpre}.

\section {The weighted universality}\label{sec-10}

Laurin{\v c}ikas \cite{Lau95} considered a weighted version of the universality for
$\zeta(s)$.   Let $w(t)$ be a positive-valued function of bounded variation
defined on $[T_0,\infty)$ (where $T_0>0$), satisfying that the variation on
$[a,b]$ does not exceed $cw(a)$ with a certain $c>0$ for any subinterval
$[a,b]\subset[T_0,\infty)$.    Define
$$
U(T,w)=\int_{T_0}^T w(t)dt,
$$
and assume that $U(T,w)\to\infty$ as $T\to\infty$.

We further assume the following property of $w(t)$, connected with ergodic theory.
Let $X(\tau,\omega)$ be any ergodic process defined on a certain probability 
space $\Omega$, $\tau\in\mathbb{R}$, $\omega\in\Omega$, 
$E(|X(\tau,\omega)|)<\infty$, and sample paths are Riemann
integrable almost surely on any finite interval.   Assume that
\begin{align}\label{10-1}
\frac{1}{U(T,w)}\int_{T_0}^T w(\tau)X(t+\tau,\omega)d\tau=E(X(0,\omega))
+o((1+|t|)^{\alpha})
\end{align}
almost surely for any $t\in\mathbb{R}$, with an $\alpha>0$, as $T\to\infty$.
Denote by $I(A)$ the indicator function of the set $A$.

\begin{thm}\label{thm10-1}
{\rm (Laurin{\v c}ikas \cite{Lau95})}
Suppose that $w(t)$ satisfies all the above conditions.   Let $K$ be a compact
subset of $D(1/2,1)$ with connected complement, $f\in H_0^c(K)$.
Then
\begin{align}\label{10-2}
\lefteqn{\liminf_{T\to\infty}\frac{1}{U(T,w)}\int_{T_0}^T w(\tau)}\\
&\times I\left(\left\{\tau\in[T_0,T]\;\left|\;\sup_{s\in K}|\zeta(s+i\tau)-f(s)|        
<\varepsilon\right.\right\}\right)
d\tau >0 \notag
\end{align}
holds for any $\varepsilon>0$.
\end{thm}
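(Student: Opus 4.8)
The plan is to follow Bagchi's probabilistic strategy, with the averaging operator $\frac{1}{U(T,w)}\int_{T_0}^T w(\tau)(\cdot)\,d\tau$ playing the role that $\frac{1}{T}\int_0^T(\cdot)\,d\tau$ plays in the proof of Theorem \ref{thm1-1}. First I would introduce the probability space $\Omega=\prod_p\{z\in\mathbb{C}\;|\;|z|=1\}$ with the product Haar measure $m_H$, and for $\omega=(\omega(p))_p\in\Omega$ the random Euler product $\zeta(s,\omega)=\prod_p(1-\omega(p)p^{-s})^{-1}$, which converges for $m_H$-almost all $\omega$ to an element of $H(D(1/2,1))$. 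The real goal is a \emph{weighted functional limit theorem}: the $H(D(1/2,1))$-valued weighted distribution
\begin{align*}
P_{T,w}(A)=\frac{1}{U(T,w)}\int_{T_0}^T w(\tau)\,I\bigl(\{\tau\in[T_0,T]\;|\;\zeta(\,\cdot\,+i\tau)\in A\}\bigr)\,d\tau
\end{align*}
converges weakly, as $T\to\infty$, to the law $P_\zeta$ of the random element $\zeta(s,\omega)$ — the same limit as in the unweighted case, the weight being absorbed by hypothesis \eqref{10-1}.

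To prove this limit theorem I would use the usual three sub-steps. (a) A weighted limit theorem on the torus $\Omega$: the measure induced by $\tau\mapsto(p^{-i\tau})_p$, averaged with weight $w$, converges to $m_H$. This is exactly where \eqref{10-1} enters — applying it to the ergodic processes $\omega\mapsto\chi(\omega)$ attached to characters $\chi$ of $\Omega$ (equivalently to exponentials $\exp(i\tau\sum_p c_p\log p)$ with finitely many nonzero integers $c_p$) reduces the Fourier coefficients of $P_{T,w}$ on $\Omega$ to those of $m_H$, the $\mathbb{Q}$-linear independence of $\{\log p\}_p$ guaranteeing that the frequency $\sum_p c_p\log p$ is nonzero for every nontrivial $\chi$. (b) Transfer to $H(D(1/2,1))$ via the truncated products $\zeta_N(s,\omega)=\prod_{p\le N}(1-\omega(p)p^{-s})^{-1}$, which are continuous images of finite-dimensional tori, together with the weighted mean-square bound
\begin{align*}
\frac{1}{U(T,w)}\int_{T_0}^T w(\tau)\,|\zeta(\sigma+i\tau)-\zeta_N(\sigma+i\tau)|^2\,d\tau\longrightarrow 0\qquad(N\to\infty)
\end{align*}
uniformly in $T$; this gives both approximation in probability and tightness of $\{P_{T,w}\}$, and here the bounded-variation hypothesis on $w$ (variation on $[a,b]$ at most $cw(a)$) is precisely what converts the classical unweighted mean values into their weighted counterparts via partial summation. (c) Identify the limit with $P_\zeta$ by the same approximation run on the $\Omega$-side.

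Next I would identify the support of $P_\zeta$. As in Bagchi's work, the support of the law of $\zeta(s,\omega)$ in $H(D(1/2,1))$ is $S=\{g\in H(D(1/2,1))\;|\;g(s)\ne 0\text{ for all }s,\ \text{or }g\equiv 0\}$: one writes $\log\zeta(s,\omega)=\sum_p\sum_{k\ge 1}\omega(p)^k k^{-1}p^{-ks}$ and uses the $\mathbb{Q}$-linear independence of $\{\log p\}_p$ together with the Pecherski{\u\i} rearrangement theorem (or a direct density argument) and Mergelyan's theorem (Remark \ref{rem1-01}) to show the closed support is all of $H(D(1/2,1))$. To conclude, given $f\in H_0^c(K)$ and $\varepsilon>0$, Mergelyan's theorem approximates a branch of $\log f$ uniformly on $K$ by a polynomial $q$, so $e^{q}$ lies within $\varepsilon/2$ of $f$ on $K$ and belongs to $S$; hence
\begin{align*}
\Phi=\Bigl\{h\in H(D(1/2,1))\;\Big|\;\sup_{s\in K}|h(s)-f(s)|<\varepsilon\Bigr\}
\end{align*}
is open and meets $S$, so $P_\zeta(\Phi)>0$, and by the portmanteau theorem applied to $P_{T,w}\Rightarrow P_\zeta$ one gets $\liminf_{T\to\infty}P_{T,w}(\Phi)\ge P_\zeta(\Phi)>0$, which is \eqref{10-2}.

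The step I expect to be the main obstacle is the weighted functional limit theorem, and within it the verification that \eqref{10-1} genuinely yields the torus limit theorem: one must select the right ergodic processes on $\Omega$ (the rotations $\tau\mapsto(p^{-i\tau})_p$ are ergodic by Weyl's criterion and the $\mathbb{Q}$-independence of $\{\log p\}_p$), check the integrability and almost-sure Riemann integrability of their sample paths, and control the error term $o((1+|t|)^{\alpha})$ uniformly enough that it survives summation over the infinitely many primes entering the Dirichlet-polynomial approximation. The weighted second-moment estimate needed for tightness is a secondary technical point resting entirely on the bounded-variation assumption on $w$.
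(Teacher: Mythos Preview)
The paper does not actually prove Theorem~\ref{thm10-1}; this is a survey, and the only argument offered is the one-line motivation immediately after the statement: in Bagchi's proof the Birkhoff--Khinchin ergodic theorem \eqref{10-3} is invoked at one point, and hypothesis \eqref{10-1} is designed precisely as its weighted substitute. Your proposal is fully consistent with that hint and with the shape of the argument in \cite{Lau95}: Bagchi's probabilistic scheme run with the weighted averaging operator $\frac{1}{U(T,w)}\int_{T_0}^T w(\tau)(\cdot)\,d\tau$, the weighted functional limit theorem as the core, support identification, then portmanteau.

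One misattribution worth flagging: you locate the use of \eqref{10-1} in step~(a), the torus limit theorem. In Bagchi's proof the Birkhoff--Khinchin theorem is \emph{not} used there --- the torus step is pure Fourier analysis, and in the weighted setting the bounded-variation hypothesis on $w$ already suffices (integration by parts gives $\frac{1}{U(T,w)}\int_{T_0}^T w(\tau)e^{ic\tau}\,d\tau=O(1/U(T,w))$ for $c\neq 0$, since both $w$ and its total variation on $[T_0,T]$ are bounded by a constant times $w(T_0)$). Birkhoff--Khinchin enters instead in the identification of the limiting measure, where one needs, for $m_H$-almost every $\omega$, a mean-square approximation of $\zeta(s+i\tau,\omega)$ by $\zeta_N(s+i\tau,\omega)$ along the flow $\tau$; it is this \emph{random} ergodic average that \eqref{10-1} is meant to replace. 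Your step~(b) handles only the deterministic side $\zeta(s+i\tau)$ versus $\zeta_N(s+i\tau)$ (correctly, via the bounded-variation hypothesis), and your step~(c) glosses over the point where \eqref{10-1} is genuinely required. Apart from this relocation of the key hypothesis, the outline is sound.
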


In the course of Bagchi's proof of the universality theorem, there is a point
where the Birkhoff-Khinchin theorem
\begin{align}\label{10-3}
\lim_{T\to\infty}\frac{1}{T}\int_0^T X(\tau,\omega)d\tau=E(X(0,\omega))
\end{align}
in ergodic theory is used.   This is the motivation of
Laurin{\v c}ikas \cite{Lau95}; clearly \eqref{10-1} is a generalization of
\eqref{10-3}.

Laurin{\v c}ikas \cite{Lau98} generalized Theorem \ref{thm10-1} to the case of
Matsumoto zeta-functions.   Weighted universality theorems for $L$-functions
$L(s,E)$ of elliptic curves over $\mathbb{Q}$ were reported by
Garbaliauskien{\.e} \cite{Garb04} \cite{Garb05}.

\section{The discrete universality}\label{sec-11}

In the previous sections, we discussed the behaviour of zeta or $L$-functions
when the imaginary part $\tau$ of the variable is moving continuously.
However, we can also obtain a kind of universality theorems when $\tau$ only
moves discretely.   We already mentioned in Section \ref{sec-1} that Voronin's
multi-dimensional denseness theorem is valid in this sense (see Remark \ref{rem1-1}).

The first {\it discrete universality} theorem is due to Reich \cite{Rei80} on
Dedekind zeta-functions.   Let $F$ be a number field and $\zeta_F(s)$ the
associated Dedekind zeta-function.    

\begin{thm}\label{thm11-1}
{\rm (Reich \cite{Rei80})}
Let $K$ be a compact subset of 
the region $D(1-\max\{2,d_F\}^{-1},1)$ with connected complement, 
and $f\in H_0^c(K)$.
Then, for any real $h\neq 0$ and any $\varepsilon>0$,
\begin{align}\label{11-1}
\liminf_{N\to\infty}\frac{1}{N}\#\left\{n\leq N\;\left|\;\sup_{s\in K}
|\zeta_F(s+ihn)-f(s)|<\varepsilon\right.\right\}>0.
\end{align}
\end{thm}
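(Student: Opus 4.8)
The plan is to mimic the proof of the continuous universality theorem (Theorem \ref{thm1-1}) in its probabilistic (Bagchi-style) form, adapting each ingredient to the discrete setting where the shift parameter runs through the arithmetic progression $\{hn : n\in\mathbb{N}\}$. The three structural inputs are: (i) a ``discrete'' mean-value / approximation statement saying that a finite truncation of the Euler product approximates $\zeta_F(s)$ in an averaged sense over $n\le N$, which is where the region $D(1-\max\{2,d_F\}^{-1},1)$ enters, since the required second-moment estimate for $\zeta_F$ is available only there; (ii) a discrete analogue of the Kronecker--Weyl theorem, applied to the vector $(h n \log p)_p$ for the primes $p$ (or prime ideals); and (iii) Mergelyan's theorem (Remark \ref{rem1-01}) together with the Pechersk{\u\i}/rearrangement argument or the standard Hilbert-space argument to show that the finite Euler sums can be steered to approximate $\log f(s)$.

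First I would fix a compact $K$ with connected complement in the stated strip and, using Mergelyan's theorem, replace the non-vanishing target $f$ by $\exp(g(s))$ for a polynomial $g$, so that it suffices to approximate $g(s)$ by $\log$ of a finite truncation $\sum_{p\le y}$ of the (logarithm of the) Euler product. Next I would invoke the discrete limit theorem: one shows that the sequence of probability measures
\begin{align*}
\frac{1}{N}\#\{\,n\le N : \zeta_F(s+ihn)\in\,\cdot\,\}
\end{align*}
on $H(D)$ converges weakly, as $N\to\infty$, to the distribution of a random Euler product $\zeta_F(s,\omega)$ over the infinite-dimensional torus. The key point enabling this is that $\{\log p : p\}$ (equivalently $\{\log N(\mathfrak p): \mathfrak p\}$ with multiplicities handled via the splitting of primes) is linearly independent over $\mathbb{Q}$, and for real $h\neq 0$ the points $\{(hn\log p/2\pi)_p \bmod 1 : n\in\mathbb{N}\}$ are equidistributed on the closure of the one-parameter subgroup they generate — the discrete Weyl criterion here only requires that $h\log p/2\pi \notin \mathbb{Q}$, which holds. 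One then shows the support of the limiting measure contains every $g\in H_0^c(K)$ extended appropriately, exactly as in the continuous case, using the density/rearrangement argument for the series $\sum_p$ in the relevant Bergman-type Hilbert space on a slightly larger region still inside the strip.

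Finally, combining the support statement with the weak convergence and the Portmanteau theorem (the approximating set being open) yields
\begin{align*}
\liminf_{N\to\infty}\frac{1}{N}\#\left\{n\le N : \sup_{s\in K}|\zeta_F(s+ihn)-f(s)|<\varepsilon\right\}>0,
\end{align*}
which is the claim. The main obstacle I anticipate is the discrete second-moment input in step (i): one needs an estimate of the form $\frac{1}{N}\sum_{n\le N}|\zeta_F(\sigma+ihn)-\zeta_{F,y}(\sigma+ihn)|^2 \to 0$ uniformly for $\sigma$ in a compact subinterval of $(1-\max\{2,d_F\}^{-1},1)$, and this requires converting the known continuous mean-square bound for $\zeta_F$ into a discrete one, typically via a Gallagher-type lemma or Montgomery--Vaughan large-sieve inequality together with the approximate functional equation — and it is precisely the limited range of validity of the mean-square estimate for $\zeta_F$ that forces the restriction to $D(1-\max\{2,d_F\}^{-1},1)$ rather than $D(1/2,1)$. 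A secondary technical point is ensuring the equidistribution in step (ii) is genuinely \emph{discrete} (Weyl sums over $n\le N$) rather than continuous; this is standard but must be checked for the case where $h\log p$ could a priori be rationally related for different $p$, which it cannot, again by linear independence of $\{\log p\}$ over $\mathbb{Q}$.
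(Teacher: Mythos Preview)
The paper is a survey and does not supply its own proof of Theorem~\ref{thm11-1}; it merely states the result and attributes it to Reich \cite{Rei80}. So there is no in-paper argument to compare against, and your Bagchi-style outline is in fact the standard modern route to discrete universality (Reich's original 1980 proof predates Bagchi and is closer to Voronin's direct method, but the probabilistic approach you sketch is the one used in most subsequent work and in the textbooks the survey cites).

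That said, there is a genuine gap in your treatment of step~(ii). You correctly note that the discrete orbit $\{(hn\log p/2\pi)_p \bmod 1 : n\in\mathbb{N}\}$ equidistributes on the closure of the subgroup it generates, but you then assert that the support computation goes through ``exactly as in the continuous case'' and dismiss the possible rational relations by appealing to the linear independence of $\{\log p\}$ over $\mathbb{Q}$. The relevant question for discrete equidistribution on the \emph{full} torus is whether $\{h\log p/2\pi\}_p\cup\{1\}$ is linearly independent over $\mathbb{Q}$, i.e.\ whether $\exp(2\pi k/h)$ is irrational for every nonzero integer $k$. This can fail: if $\exp(2\pi/h)=a/b\in\mathbb{Q}$, then $2\pi/h$ lies in the $\mathbb{Q}$-span of $\{\log p\}$, the closure of the orbit is a proper closed subgroup of the torus, and the limiting measure is \emph{not} the full Haar measure. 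The survey itself flags exactly this dichotomy in the paragraphs following Theorem~\ref{thm11-1} (the condition on $\exp(2\pi k/h)$ in Ka{\v c}inskait{\.e}'s and Ignatavi{\v c}i{\=u}t{\.e}'s results, and the set $P_h$ of exceptional primes in Theorem~\ref{thm11-4}).

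To make your argument work for \emph{all} real $h\neq 0$, you must treat the two cases separately. In the problematic case the subtorus is cut out by relations involving only the finitely many primes dividing $ab$; the remaining coordinates are still free, so the random Euler product over primes outside $P_h$ has full support in the usual sense, while the finite Euler factor over $P_h$ is a fixed non-vanishing holomorphic function on $K$ that can be divided out of the target $f$. This extra step is not difficult, but it is not ``standard'' in the sense of being identical to the continuous case, and omitting it leaves the proof incomplete precisely for those $h$ where the discrete and continuous settings genuinely differ.
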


The joint discrete universality theorem for Dirichlet $L$-functions was given in
Bagchi \cite{Bag81}.   He also obtained the discrete universality for 
$\zeta(s,\alpha)$ when $\alpha$ is rational, for which 
Sander and J. Steuding \cite{SanSte06} gave
a different approach.

Ka{\v c}inskait{\.e} \cite{Kac02} (see also \cite{Kac12})
proved a discrete universality theorem for
Matsumoto zeta-functions under the condition that $\exp(2\pi k/h)$ is
irrational for any non-zero integer $k$.   Ignatavi{\v c}i{\=u}t{\.e} \cite{Ign02} 
reported
that certain discrete universality  and certain joint discrete universality hold for 
Lerch zeta-functions, provided $\exp(2\pi/h)$ is rational.

As can be seen in the above examples, an interesting point on discrete universality
is that the arithmetic nature of the parameter $h$ plays a role.

The discrete universality for $L$-functions $L(s,E)$ of elliptic curves 
was first studied in \cite{GarbLau04} under the assumption that $\exp(2\pi k/h)$ is
irrational for any non-zero integer $k$.   However this condition was then
removed:

\begin{thm}\label{thm11-2}
{\rm (Garbaliauskien{\.e}, Genys and Laurin{\v c}ikas \cite{GarbGenLau08})}
The discrete universality holds for $L(s,E)$ for any real $h\neq 0$
in the region $D(1,3/2)$.
\end{thm}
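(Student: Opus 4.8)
The plan is to follow Bagchi's probabilistic scheme adapted to the discrete setting, using the positive density method to handle the coefficients of $L(s,E)$. Since $E$ is modular, $L(s,E) = L(s,g)$ for a normalized Hecke-eigen newform $g$ of weight $\kappa = 2$, so the region $D(\kappa/2,(\kappa+1)/2) = D(1,3/2)$ is exactly the expected one from Theorem \ref{thm3-1}. First I would set up the relevant probability space: let $\Omega = \prod_p \gamma_p$ be the infinite torus (one copy of the unit circle $\gamma_p$ for each prime $p$), equipped with the product Haar measure $m_H$, and let $\omega(p)$ denote the $p$-th coordinate. On $\Omega$ one defines the random Euler product $L(s,g,\omega) = \prod_p (1 - \widetilde a(p)\omega(p)p^{-s} + \omega(p)^2 p^{-2s})^{-1}$, which converges almost surely for $\Re s > 1/2$ in the normalized variable. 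The key difference from the continuous case is that the relevant ergodic transformation is now $\Phi_h\colon \Omega \to \Omega$ defined by $\Phi_h(\omega) = (p^{-ih}\omega(p))_p$, i.e. a shift by the point $(p^{-ih})_p$; I would verify that $\Phi_h$ is ergodic with respect to $m_H$. This ergodicity follows from the fact that $(p^{-ih})_p$ generates a dense subgroup, which in turn rests on the linear independence of $\{h\log p/2\pi\}_p \cup \{1\}$ over $\mathbb{Q}$ — and here, crucially, no irrationality hypothesis on $\exp(2\pi k/h)$ is needed because one only requires $\mathbb{Q}$-linear independence of $\{\log p\}_p$ together with $2\pi/h$, which holds automatically for every real $h \neq 0$ by the transcendence of $\pi$ (or by a direct character-theoretic argument on the torus).

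Next I would establish the discrete limit theorem: the sequence of measures $Q_N(A) = N^{-1}\#\{n \le N : L(s+ihn, g) \in A\}$ on $H(D(1,3/2))$ converges weakly as $N\to\infty$ to the distribution $P_g$ of the random element $L(s,g,\omega)$. This is proved in the standard way — first a finite-dimensional limit theorem via the discrete Kronecker–Weyl (or Weyl equidistribution) theorem applied to $(n h \log p / 2\pi)_{p \le y}$, then a truncation/tightness argument passing from finite Euler products to the full series, using the second-moment bound $\sum_{p \le x}|\widetilde a(p)|^2 = \lambda x/\log x\,(1+o(1))$ in place of the unavailable estimate for $\sum |\widetilde a(p)|p^{-1}$, which is precisely the positive density method of \cite{LauMat01}. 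One also needs an approximate mean-square bound for $L(s,g)$ on vertical lines inside the strip; for weight-$2$ forms (equivalently, elliptic curve $L$-functions) this is classical.

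Finally, I would identify the support of $P_g$. Using that $\{\log p\}_p$ is linearly independent over $\mathbb{Q}$ one shows, via a Hilbert-space rearrangement argument (Pečerskiĭ's theorem, as in Voronin's original approach) or via Bagchi's density argument, that the set of logarithms $\{\log L(s,g,\omega)\}$ is dense in $H(K)$ for any admissible compact $K$; hence the support of $P_g$ contains every non-vanishing $f \in H_0^c(K)$. Combining this with the discrete limit theorem and the fact that the set $\{g \in H(D(1,3/2)) : \sup_{s\in K}|g(s) - f(s)| < \varepsilon\}$ is open, the portmanteau theorem yields
\begin{align*}
\liminf_{N\to\infty}\frac{1}{N}\#\left\{n \le N : \sup_{s\in K}|L(s+ihn,E) - f(s)| < \varepsilon\right\} \ge P_g\bigl(\{g : \sup_{s\in K}|g-f|<\varepsilon\}\bigr) > 0,
\end{align*}
which is the assertion. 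The main obstacle I anticipate is not the support computation but the discrete limit theorem together with the verification that the ergodicity of $\Phi_h$ and the truncation estimates go through uniformly for \emph{every} nonzero real $h$ — in earlier work an auxiliary irrationality condition on $\exp(2\pi k/h)$ was imposed precisely to make the relevant exponential sums behave, and removing it requires handling the (measure-zero but nonempty) exceptional resonances by a more careful equidistribution argument rather than by outright avoidance.
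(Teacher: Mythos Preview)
The survey does not actually prove this theorem; it only cites \cite{GarbGenLau08} and emphasizes that the point of that paper is to \emph{remove} the irrationality hypothesis on $\exp(2\pi k/h)$ assumed in the earlier work \cite{GarbLau04}. So there is no in-paper argument to compare against, but your proposal can still be assessed against what the theorem is claiming.

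There is a genuine gap, and it sits precisely at the place where Theorem~\ref{thm11-2} goes beyond \cite{GarbLau04}. You assert that the $\mathbb{Q}$-linear independence of $\{\log p\}_p \cup \{2\pi/h\}$ ``holds automatically for every real $h\neq 0$ by the transcendence of~$\pi$''. This is false. Take $h=2\pi/\log 2$: then $2\pi/h=\log 2$, and the set is dependent. More generally, that linear independence is \emph{equivalent} to the condition ``$\exp(2\pi m/h)\notin\mathbb{Q}$ for every nonzero integer $m$'': a relation $k_0(2\pi/h)+\sum_p k_p\log p=0$ with $k_0\neq 0$ gives $\exp(2\pi k_0/h)=\prod_p p^{-k_p}\in\mathbb{Q}$, and conversely. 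So the hypothesis you say is automatic is exactly the hypothesis of \cite{GarbLau04} that the theorem is meant to dispense with. The transcendence of $\pi$ is irrelevant here; it says nothing about whether $2\pi/h$ lies in the $\mathbb{Q}$-span of the $\log p$. Your final paragraph half-recognizes the difficulty, but it contradicts the confident claim made earlier and offers no mechanism for the exceptional $h$.

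What your sketch actually proves is the result of \cite{GarbLau04}. The new content of \cite{GarbGenLau08} is a way to handle the rational case $\exp(2\pi/h)=a/b$: the finitely many primes dividing $ab$ are exactly those responsible for the dependence, and one treats them separately (compare the construction of $L_h(s,\chi)$ in Theorem~\ref{thm11-4}, where the corresponding Euler factors are split off). One then proves the discrete limit theorem on a suitably modified torus where ergodicity does hold, and checks that the finitely many excised Euler factors, being nonvanishing holomorphic functions on $D(1,3/2)$, do not shrink the support of the limit measure. Your outline does not touch this step.
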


The same type of result can be shown, more generally, for $L$-functions attached 
to new forms.   When $\exp(2\pi k/h)$ is
irrational for any non-zero integer $k$, this was done in 
Laurin{\v c}ikas, Matsumoto and J. Steuding \cite{LauMatSte05}.

The discrete universality for periodic zeta-functions was studied in
Ka{\v c}inskait{\.e}, Javtokas and {\v S}iau{\v c}i{\=u}nas \cite{KacJavSia08}
and Laurin{\v c}ikas, Macaitien{\.e} and {\v S}iau{\v c}i{\=u}nas \cite{LauMacSia09}, 
while the case of periodic Hurwitz zeta-functions was discussed by
Laurin{\v c}ikas and Macaitien{\.e} \cite{LauMac09b}.   The result in
\cite{LauMac09b} especially includes the discrete universality of the Hurwitz
zeta-function $\zeta(s,\alpha)$ when $\alpha$ is transcendental.
Laurin{\v c}ikas \cite{LauJNT} further proved that if the set
$$                                                                                      
\{\log(m+\alpha)\;|\;m\in\mathbb{N}_0\}\cup\{2\pi/h\}   
$$
is linearly independent over ${\mathbb Q}$, then the discrete universality holds
for $\zeta(s,\alpha)$.   See also \cite{BuiLauMacRas14}.
A joint version is studied in Laurin{\v c}ikas \cite{LauMS}.

Macaitien{\.e} \cite{Mac06} obtained a discrete universality theorem for general
Dirichlet series.   

\begin{thm}\label{thm11-3}
{\rm (Macaitien{\.e} \cite{Mac06})}
Let $f(s)$ be general Dirichlet series as in Theorem \ref{thm5-3}, and further
suppose that $\lambda_n$ are algebraic numbers and $\exp(2\pi/h)\in\mathbb{Q}$.
Then the discrete universality holds for $f(s)$ in the region 
$D(\sigma_1,\sigma_a)$.
\end{thm}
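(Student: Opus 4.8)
The plan is to follow Bagchi's probabilistic scheme in its discrete form, combined with the positive density method that already underlies Theorem \ref{thm5-3}. First I would introduce the infinite-dimensional torus $\Omega=\prod_{n=1}^{\infty}\{\omega\in\mathbb{C}\;|\;|\omega|=1\}$ with its product Haar measure, and for $\omega=(\omega_n)\in\Omega$ form the random Dirichlet series $f(s,\omega)=\sum_{n=1}^{\infty}a_n\omega_n\exp(-\lambda_n s)$; conditions (v), (vi) together with a Montgomery--Vaughan type bound show that this converges almost surely to an element of $H(D(\sigma_1,\sigma_a))$. The key step is a \emph{discrete functional limit theorem}: the probability measures
\[
A\longmapsto\frac{1}{N}\#\{0\le n\le N\;|\;f(s+ihn)\in A\}
\]
on $H(D(\sigma_1,\sigma_a))$ converge weakly, as $N\to\infty$, to the distribution $P_f$ of $f(\cdot,\omega)$. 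Its proof goes through (a) a mean-square approximation of $f(s)$ by the finite sums $\sum_{\lambda_n\le x}a_n\exp(-\lambda_n s)$, deduced from (ii)--(iv) by a Mellin-transform and smoothing argument together with Gallagher's lemma, which converts the continuous bound (iv) into the discrete mean value $\frac1N\sum_{n\le N}|f(\sigma+ihn+v)|^2=O(1)$ uniformly for $v$ in a compact set and $\sigma>\sigma_1$; and (b) the uniform distribution modulo $1$ of the sequence $\bigl(\tfrac{h\lambda_1 n}{2\pi},\tfrac{h\lambda_2 n}{2\pi},\dots\bigr)_{n\ge1}$ in $\Omega$.

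Step (b) is exactly where the arithmetic hypotheses enter. By the Weyl criterion this sequence is uniformly distributed in $\Omega$ if and only if $1,\tfrac{h\lambda_1}{2\pi},\tfrac{h\lambda_2}{2\pi},\dots$ are linearly independent over $\mathbb{Q}$, equivalently $\tfrac{2\pi}{h},\lambda_1,\lambda_2,\dots$ are linearly independent over $\mathbb{Q}$. Now $\exp(2\pi/h)\in\mathbb{Q}$ and $\exp(2\pi/h)\ne1$, so $\tfrac{2\pi}{h}=\log q$ for a rational $q\ne0,1$, which is transcendental by the Lindemann--Weierstrass theorem. Since each $\lambda_n$ is algebraic and the $\lambda_n$ are already linearly independent over $\mathbb{Q}$, any nontrivial relation $c_0\tfrac{2\pi}{h}+\sum_j c_j\lambda_j=0$ with $c_j\in\mathbb{Q}$ would force, after dividing by $c_0\ne0$, the transcendental number $\tfrac{2\pi}{h}$ to be a $\mathbb{Q}$-linear combination of algebraic numbers --- impossible; and $c_0=0$ contradicts the independence of the $\lambda_j$. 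Hence (b) holds, and the limiting measure $P_f$ coincides with the distribution of $f(\cdot,\omega)$.

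With the discrete limit theorem established, it remains to identify the support of $P_f$ as all of $H(D(\sigma_1,\sigma_a))$. This is where (i) (no Euler product) and the positive density conditions (v), (vi) are used precisely as in the proof of Theorem \ref{thm5-3}: one singles out from (vi) a subseries of positive density whose coefficients are bounded below, and a rearrangement argument in Hilbert space (of Pecherski{\u\i} type) shows that the sums $\sum_{\lambda_n\le x}a_n\omega_n\exp(-\lambda_n s)$ are dense in $H(D(\sigma_1,\sigma_a))$, so the support is the whole space. Finally, given a compact $K\subset D(\sigma_1,\sigma_a)$ with connected complement and $g\in H^c(K)$, Mergelyan's theorem furnishes a polynomial $p$ with $\sup_K|g-p|<\varepsilon/2$; since $p$ lies in the support of $P_f$, applying the portmanteau theorem to the open set $\{\phi\;|\;\sup_K|\phi-p|<\varepsilon/2\}$ gives
\[
\liminf_{N\to\infty}\frac1N\#\Bigl\{0\le n\le N\;\Big|\;\sup_{s\in K}|f(s+ihn)-p(s)|<\tfrac{\varepsilon}{2}\Bigr\}>0,
\]
and the triangle inequality yields the claim. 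The steps I expect to be hardest are, first, the discrete limit theorem with the correct mean-value input --- the Gallagher-lemma passage from (iv) to a discrete second moment and the control of the tail of the random series under (v), (vi) --- and, more essentially, the support computation, i.e.\ the density of the truncated random sums, which is the genuine analytic heart and is exactly what the positive density method supplies.
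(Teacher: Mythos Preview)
The paper is a survey and does not supply a proof of Theorem~\ref{thm11-3}; it merely states the result and cites Macaitien{\.e}~\cite{Mac06}. So there is no in-paper proof to compare against.

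That said, your outline matches the method of \cite{Mac06} and of the Laurin{\v c}ikas school quite closely: a discrete functional limit theorem on $H(D(\sigma_1,\sigma_a))$ obtained from (a) a discrete mean-square approximation via Gallagher's lemma applied to condition~(iv), and (b) uniform distribution of $(h\lambda_n n/2\pi)_n$ on the infinite torus; then identification of the support of the limit measure using conditions (v), (vi) and the positive density method exactly as in Theorem~\ref{thm5-3}; and finally Mergelyan plus the portmanteau theorem. Your handling of the arithmetic hypothesis is the essential point and is correct: $\exp(2\pi/h)\in\mathbb{Q}\setminus\{1\}$ forces $2\pi/h$ to be transcendental by Lindemann--Weierstrass, and since the $\lambda_n$ are algebraic and already $\mathbb{Q}$-linearly independent, the enlarged family $\{2\pi/h,\lambda_1,\lambda_2,\dots\}$ remains $\mathbb{Q}$-linearly independent, which is exactly what the Weyl criterion needs. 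Nothing in your sketch looks wrong; the genuinely laborious parts are, as you note, the discrete limit theorem and the support computation, both of which are carried out in detail in \cite{Mac06} along the lines you describe.
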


The discrete analogue of mixed universality can also be considered.
This direction was first studied by Ka{\v c}inskait{\.e} \cite{Kac11}.
Consider the case $\exp(2\pi/h)\in\mathbb{Q}$.   Write $\exp(2\pi/h)=a/b$
with $a,b\in{\mathbb Z}$ and $(a,b)=1$.   Denote by $P_h$ the set of all prime
numbers appearing as a prime factor of $a$ or $b$.   Define the modified
Dirichlet $L$-function $L_h(s,\chi)$ by removing all Euler factors 
corresponding to primes in $P_h$, that is
$$
L_h(s,\chi)=\prod_{p\notin P_h}\left(1-\frac{\chi(p)}{p^s}\right)^{-1}.
$$

\begin{thm}\label{thm11-4}
{\rm (Ka{\v c}inskait{\.e} \cite{Kac11})}
Let $K_1, K_2$ be compact subsets of $D(1/2,1)$ with connected complements,
and $f_1\in H_0^c(K_1)$, $f_2\in H^c(K_2)$.
If $\alpha$ is transcendental, $\mathfrak{B}$ is a periodic sequence and 
$\exp(2\pi/h)\in\mathbb{Q}$ as above, then
\begin{align}\label{11-2}
\liminf_{N\to\infty}\frac{1}{N}\#\left\{n\leq N\;\left|\;\sup_{s\in K_1}
|L_h(s+ihn,\chi)-f_1(s)|<\varepsilon,\right.\right.\\
\left.\sup_{s\in K_2}|\zeta(s+ihn,\alpha,\mathfrak{B})-f_2(s)|<\varepsilon\right\}>0
\notag
\end{align}
for any $\varepsilon>0$
\footnote{The statement in \cite{Kac11} is given for $L(s,\chi)$, but her
argument is valid not for $L(s,\chi)$, but for $L_h(s,\chi)$.}
.
\end{thm}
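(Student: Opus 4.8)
The plan is to adapt Bagchi's probabilistic method --- in the discrete version already underlying Theorem~\ref{thm11-3} and in the discrete analogue of the mixed universality of Theorem~\ref{thm8-1} --- in four steps: a joint discrete limit theorem on a function space, the identification of the support of the limit measure, a Mergelyan-type approximation of the targets, and a concluding weak-convergence argument. Throughout write $D=D(1/2,1)$ and $\gamma=\{z\in\mathbb{C}\,:\,|z|=1\}$. The arithmetic heart of the matter is the following: since $\exp(2\pi/h)=a/b$ with $(a,b)=1$, one has $2\pi/h=\log a-\log b$, a $\mathbb{Z}$-linear combination of $\{\log p\,:\,p\in P_h\}$; consequently, after removing from $L(s,\chi)$ precisely the Euler factors over $P_h$ to form $L_h(s,\chi)$, the set
\[
\{\log(m+\alpha)\,:\,m\in\mathbb{N}_0\}\cup\{\log p\,:\,p\notin P_h\}\cup\{2\pi/h\}
\]
is linearly independent over $\mathbb{Q}$. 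Here transcendence of $\alpha$ gives the $\mathbb{Q}$-independence of $\{\log(m+\alpha)\}_{m}\cup\{\log p\}_{\text{all }p}$, and the disjointness of $P_h$ from its complement then forces the coefficient of $2\pi/h$ in any relation to vanish. By Weyl's criterion this makes the sequence
\[
\Bigl(\bigl(\tfrac{hn}{2\pi}\log p\bigr)_{p\notin P_h},\ \bigl(\tfrac{hn}{2\pi}\log(m+\alpha)\bigr)_{m\in\mathbb{N}_0}\Bigr),\qquad n=1,2,\ldots,
\]
uniformly distributed in the compact abelian group $\Omega=\prod_{p\notin P_h}\gamma\times\prod_{m\in\mathbb{N}_0}\gamma$.

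Step~1. Combining this equidistribution with the absolute convergence of the defining product/series for $\sigma>1$, with discrete mean-square estimates showing that $L_h$ and $\zeta(s,\alpha,\mathfrak{B})$ are well approximated in $D$ by their finite truncations on average over $n\le N$, and with a standard tightness/limit passage on $H(D)\times H(D)$, I would obtain the joint discrete limit theorem: the measures $A\mapsto\frac1N\#\{n\le N\,:\,(L_h(\cdot+ihn,\chi),\zeta(\cdot+ihn,\alpha,\mathfrak{B}))\in A\}$ converge weakly, as $N\to\infty$, to the distribution $P$ of the random element $(L_h(s,\chi,\omega),\zeta(s,\alpha,\mathfrak{B},\omega))$ with $\omega$ distributed according to the normalized Haar measure on $\Omega$. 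Step~2. Because the coordinates of $\Omega$ split into the two mutually independent blocks indexed by $\{p\notin P_h\}$ and by $\mathbb{N}_0$, the measure $P$ is a product $P_1\otimes P_2$, so $\operatorname{supp}P=\operatorname{supp}P_1\times\operatorname{supp}P_2$. The Euler-product factor gives, exactly as in Bagchi's computation, $\operatorname{supp}P_1=\{g\in H(D)\,:\,g(s)\neq 0\text{ on }D\}\cup\{0\}$ --- deleting the finitely many Euler factors over $P_h$ does not affect this --- while $\zeta(s,\alpha,\mathfrak{B})$, having no Euler product, yields $\operatorname{supp}P_2=H(D)$, which is precisely why the second function enjoys \emph{strong} universality (no non-vanishing hypothesis on $f_2$).

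Step~3. Now take $f_1\in H_0^c(K_1)$ and $f_2\in H^c(K_2)$. Since $K_1$ has connected complement and $f_1$ is non-vanishing on $K_1$, a single-valued branch of $\log f_1$ belongs to $H^c(K_1)$; by Mergelyan's theorem there is a polynomial $q_1$ with $\sup_{s\in K_1}|e^{q_1(s)}-f_1(s)|<\varepsilon/2$, and $e^{q_1}$ is entire and non-vanishing; similarly there is a polynomial $q_2$ with $\sup_{s\in K_2}|q_2(s)-f_2(s)|<\varepsilon/2$. Then $(e^{q_1},q_2)\in\operatorname{supp}P$, and the set
\[
G=\Bigl\{(g_1,g_2)\,:\,\sup_{s\in K_1}|g_1(s)-e^{q_1(s)}|<\tfrac\varepsilon2,\ \sup_{s\in K_2}|g_2(s)-q_2(s)|<\tfrac\varepsilon2\Bigr\}
\]
is open in $H(D)\times H(D)$ and has $P(G)>0$. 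Weak convergence gives $\liminf_N\frac1N\#\{n\le N:(L_h(\cdot+ihn,\chi),\zeta(\cdot+ihn,\alpha,\mathfrak{B}))\in G\}\ge P(G)>0$, and the triangle inequality converts this into \eqref{11-2}.

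The main obstacle will be Step~1, and specifically the discrete second-moment bounds: one must show $\frac1N\sum_{n\le N}\sup_{\sigma}|L_h(\sigma+ihn,\chi)-(\text{truncation})|^2\to 0$ and the analogue for $\zeta(s,\alpha,\mathfrak{B})$, uniformly for $\sigma$ in compact subsets of $(1/2,1)$, and then lift these to the function-space topology; this rests on the equidistribution above together with Gallagher-type discrete mean-value lemmas and a little care because $L_h$ differs from $L(s,\chi)$ by a finite Euler product. A secondary but indispensable point, peculiar to the discrete setting, is the passage from $L(s,\chi)$ to $L_h(s,\chi)$ itself: retaining the Euler factors over $P_h$ would destroy the $\mathbb{Q}$-linear independence needed for the equidistribution, and the whole argument would break down.
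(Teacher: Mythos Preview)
The paper is a survey and does not supply a proof of this theorem; it merely states the result and cites Ka{\v c}inskait{\.e}'s original article \cite{Kac11}, adding the footnote that her argument actually proves the statement for $L_h(s,\chi)$ rather than $L(s,\chi)$. Your outline is the standard Bagchi-type probabilistic scheme adapted to the discrete mixed setting, and it is correct: the arithmetic reason you give for passing from $L(s,\chi)$ to $L_h(s,\chi)$ --- that $2\pi/h=\log a-\log b$ is a $\mathbb{Z}$-linear combination of $\log p$ over $p\in P_h$, so those primes must be excised to recover the $\mathbb{Q}$-linear independence needed for equidistribution on $\Omega$ --- is precisely the point behind the paper's footnote; and the four steps (joint discrete functional limit theorem, product splitting of the limit measure and identification of the two supports, Mergelyan approximation of $(f_1,f_2)$ by $(e^{q_1},q_2)$, Portmanteau-type conclusion) are in the right order with the right hypotheses invoked at each stage. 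Your flagging of Step~1, and specifically the Gallagher-type discrete mean-square bounds, as the technical crux is accurate.
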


Buivytas and Laurin{\v c}ikas \cite{BuiLau} proved that if the set
$$
\{\log p\;|\;p:{\rm prime}\}\cup\{\log(m+\alpha)\;|\;m\in\mathbb{N}_0\}
\cup\{2\pi/h\}
$$
is linearly independent over ${\mathbb Q}$, then the discrete mixed universality
holds for $\zeta(s)$ and $\zeta(s,\alpha)$, that is,
\begin{align}\label{11-3}                                                               
\liminf_{N\to\infty}\frac{1}{N}\#\left\{n\leq N\;\left|\;\sup_{s\in K_1}                
|\zeta(s+ihn)-f_1(s)|<\varepsilon,\right.\right.\\                                   
\left.\sup_{s\in K_2}|\zeta(s+ihn,\alpha)-f_2(s)|<\varepsilon\right\}>0.    
\notag                                                                                  
\end{align}

In \eqref{11-2} and \eqref{11-3}, the shifting parameter $h$ is common to the
both
of relevant zeta (or $L$)-functions.   Buivytas and Laurin{\v c}ikas \cite{BuiLau2}
studied the case when the parameter for $\zeta(s)$ is different from the parameter
for $\zeta(s,\alpha)$.

We will encounter a rather different type of discrete universality theorems in
Section \ref{sec-18}.

\section{The $\chi$-universality}\label{sec-12}

The main point of Voronin's universality theorem is the existence of $\tau$,
the imaginary part of the complex variable $s$, satisfying a certain
approximation condition.   This is the common feature of all universality
theorems mentioned in the previous sections.   However there is another type of
universality, which is concerned with the existence of a character satisfying
certain approximation conditions.

The first theorem in this direction is as follows.

\begin{thm}\label{thm12-1}
{\rm (Bagchi \cite{Bag81}, Gonek \cite{Gon79}, Eminyan \cite{Emi90})}
Let $K$ be a compact subset of $D(1/2,1)$ with connected complement, and
$f\in H_0^c(K)$.   Let $Q$ be an infinite set of positive integers.
Then for any $\varepsilon>0$, 
\begin{align}\label{12-1}
\liminf_{q\to\infty\atop q\in Q}\frac{1}{\varphi(q)}
\;\#\left\{\chi {\rm (mod}\;q{\rm )}\;\left|\;\sup_{s\in K}
|L(s,\chi)-f(s)|<\varepsilon\right.\right\}>0
\end{align}
holds, provided $Q$ is one of the following\footnote
{Bagchi \cite[Theorem 5.3.11]{Bag81} proved the case (i).   In the statement of
Gonek \cite[Theorem 5.1]{Gon79} there is no restriction on $Q$, but Mishou
\cite{Mis03b} pointed out that condition (ii) is necessary to verify Gonek's proof.
Eminyan \cite{Emi90} studied the special case $r=1$ of (ii).}:

{\rm (i)} $Q$ is the set of all prime numbers;

{\rm (ii)} $Q$ is the set of positive integers of the form
$q=p_1^{a_1}\cdots p_r^{a_r}$ {\rm(}$a_1,\ldots,a_r\in\mathbb{N}\cup\{0\}${\rm)},
where $\{p_1,\ldots,p_r\}$ is a fixed finite set of prime numbers.
\end{thm}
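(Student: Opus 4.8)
The plan is to adapt Bagchi's probabilistic proof of Voronin's universality theorem (Theorem \ref{thm1-1}) to the character aspect. The key structural parallel is that $\{\log p : p \text{ prime}\}$ plays, in the $t$-aspect, the role that the family of values $\{\chi(p) : p \text{ prime}\}$ plays here as $\chi$ ranges over characters mod $q$. First I would fix a compact $K \subset D(1/2,1)$ with connected complement and $f \in H_0^c(K)$. By Mergelyan's theorem (Remark \ref{rem1-01}), since $f$ is non-vanishing on $K$ and $K$ has connected complement, $\log f$ admits a holomorphic extension to a neighborhood of $K$, so it suffices to approximate $\log f$ by $\log L(s,\chi)$ uniformly on $K$ — equivalently, to approximate a polynomial $g(s)$ close to $\log f$ by the Dirichlet-type series $\sum_p \chi(p) p^{-s}$ (plus a harmless tail from prime powers). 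Thus the target reduces to: for a positive proportion of $\chi \pmod q$ (with $q \to \infty$ in $Q$), the truncated sum $\sum_{p \le y} \chi(p) p^{-s}$ lies within $\varepsilon$ of $g(s)$ uniformly on $K$.

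The main analytic input is a mean-value (approximation-in-mean) statement: for $q$ in the relevant set $Q$ and $y = y(q)$ chosen appropriately,
\begin{align*}
\frac{1}{\varphi(q)} \sum_{\chi \,(\mathrm{mod}\, q)} \int_K \left| \log L(s,\chi) - \sum_{p \le y} \chi(p) p^{-s} \right|^2 \, |ds| \longrightarrow 0,
\end{align*}
so that the finite Euler sum approximates $\log L(s,\chi)$ in mean over characters. This rests on classical second-moment estimates for $L$-functions averaged over Dirichlet characters, together with a zero-density bound; this averaged-mean-value lemma is where the arithmetic nature of $Q$ enters, and establishing it cleanly for the two families in (i) and (ii) is the technical heart. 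Next I would prove a limit theorem: as $q \to \infty$ in $Q$, the random vector $(\chi(p_1), \ldots, \chi(p_N))$ — with $\chi$ chosen uniformly from characters mod $q$ — becomes equidistributed on the torus $(S^1)^N$. For $Q$ the primes this is a standard consequence of orthogonality of characters (the number of $\chi \pmod q$ with prescribed values $\chi(p_j) \approx e^{2\pi i\theta_j}$ is $\sim \varphi(q)/q^{N-1} \cdot$ volume factor), and for the multiplicatively structured $Q$ in (ii) it follows because the group of characters mod $q = p_1^{a_1}\cdots p_r^{a_r}$ decomposes via CRT and the images of the $\log p$'s generate a dense subgroup. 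This is the analogue of the Kronecker–Weyl step (Remark \ref{rem1-0}).

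Given these two ingredients, the conclusion follows by the now-standard support argument: the equidistribution lemma shows that the distribution of $s \mapsto \sum_{p \le y}\chi(p)p^{-s}$ over $\chi \pmod q$ converges weakly to a measure whose support contains, by a Hilbert-space density argument (the Pecherski\u\i\ rearrangement theorem, or Hurwitz's theorem plus a series-rearrangement lemma as in Bagchi's treatment expounded in \cite{Lau96}), every $g \in H(U)$ for a suitable neighborhood $U \supset K$; in particular $g$ itself lies in the support, so the set of $\chi$ with $\sup_{s \in K}|\sum_{p\le y}\chi(p)p^{-s} - g(s)| < \varepsilon/2$ has positive lower density in $\varphi(q)$. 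Combining with the mean-value lemma to pass from the truncation back to $\log L(s,\chi)$, and then exponentiating, yields \eqref{12-1}. I expect the main obstacle to be the uniformity in $q$: one needs the second-moment and zero-density estimates for $L(s,\chi)$ to hold with constants independent of $q$ as $q$ ranges over the (possibly sparse) set $Q$, and for family (ii) one must carefully track how the fixed set of ramified primes $\{p_1,\ldots,p_r\}$ interacts with the Euler products — this is precisely the point where, as the footnote notes, condition (ii) rather than an arbitrary infinite $Q$ is required.
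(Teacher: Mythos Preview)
The paper is a survey and does not supply a proof of Theorem~\ref{thm12-1}; it merely states the result and attributes the two cases to the theses of Bagchi \cite{Bag81} and Gonek \cite{Gon79} (with Eminyan \cite{Emi90} for the special case $r=1$ of (ii)). There is therefore no proof in the paper to compare your proposal against.

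That said, your overall architecture --- reduce via Mergelyan to approximating a polynomial by $\sum_{p\le y}\chi(p)p^{-s}$, establish a second-moment/mean-value lemma over $\chi\pmod q$ to pass between the truncated sum and $\log L(s,\chi)$, prove an equidistribution statement for $(\chi(p))_p$ as $\chi$ ranges over characters, and conclude by a support/density argument --- is indeed the shape of Bagchi's probabilistic method adapted to the $\chi$-aspect, and this is what the cited sources do. One point to tighten: your equidistribution heuristic for case (ii) is loosely phrased. The primes $p_1,\ldots,p_r$ divide $q$, so $\chi(p_j)=0$ and these primes play no role in the Euler sum; the relevant claim is that for any finite set of primes $\ell_1,\ldots,\ell_N$ coprime to $p_1\cdots p_r$, the vector $(\chi(\ell_1),\ldots,\chi(\ell_N))$ becomes equidistributed on $(S^1)^N$ as $q\to\infty$ through $Q$. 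This requires that the images of $\ell_1,\ldots,\ell_N$ in $(\mathbb{Z}/q\mathbb{Z})^\times$ be ``independent'' in a suitable asymptotic sense, which is exactly where the restriction to the specific families (i) and (ii) is used (and where Mishou's correction to Gonek's original statement enters). Your sentence about ``the images of the $\log p$'s generate a dense subgroup'' conflates the $t$-aspect mechanism with the $\chi$-aspect one; here the correct object is the multiplicative structure of $(\mathbb{Z}/q\mathbb{Z})^\times$, not linear independence of logarithms.
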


This type of results is called the {\it $\chi$-universality}, or 
{\it the universality in $\chi$-aspect}.   The universality for Hecke
$L$-functions of number fields in $\chi$-aspect was discussed by 
Mishou and Koyama \cite{MisKoy02}, and by Mishou \cite{Mis04} \cite{Mis05}.

Let $\chi_d$ be a real Dirichlet character with discriminant $d$.
Another interesting direction of research is the universality for $L(s,\chi_d)$ 
in $d$-aspect.   
This direction was studied in a series of papers by Mishou and Nagoshi
\cite{MisNag06} \cite{MisNag06b} \cite{MisNag06c} \cite{MisNag12}.
Let $\Lambda^+$ (resp. $\Lambda^-$) be the set of all positive (resp. negative)
discriminants, and $\Lambda^+(X)$ (resp. $\Lambda^-(X)$) be the set of 
discriminants $d$ satisfying $0<d\leq X$ (resp. $-X\leq d<0$).

\begin{thm}\label{thm12-2}
{\rm (Mishou and Nagoshi \cite{MisNag06})}
Let $\Omega$ be a simply connected domain in $D(1/2,1)$ which is symmetric with
respect to the real axis.
Let $f(s)$ be holomorphic and non-vanishing on $\Omega$, and positive-valued on
$\Omega\cap\mathbb{R}$.   Let $K$ be a compact subset of $\Omega$.
Then, for any $\varepsilon>0$, 
\begin{align}\label{12-2}
\liminf_{X\to\infty}\frac{1}{\# \Lambda^{\pm}(X)}\#\left\{d\in \Lambda^{\pm}(X)
\;\left|\;\sup_{s\in K}|L(s,\chi_d)-f(s)|<\varepsilon\right.\right\}>0
\end{align}
holds.
\end{thm}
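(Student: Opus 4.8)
The natural plan is to run Bagchi's probabilistic strategy, with the family $\{\chi_d\}$ in the role of the vertical translations $\zeta(s+i\tau)$. Fix $\Omega$, $f$, $K$ and $\varepsilon>0$ as in the statement; note $\Omega\subset D(1/2,1)$ and, since $\Omega$ is simply connected and $f$ is non-vanishing there, that $\log f\in H(\Omega)$ is well defined. For each $X$ let $\mu_X^{\pm}$ be the probability measure on $H(\Omega)$ assigning mass $(\#\Lambda^{\pm}(X))^{-1}$ to $L(\cdot,\chi_d)$ for each fundamental discriminant $d\in\Lambda^{\pm}(X)$. The first step is a limit theorem: $\mu_X^{\pm}$ converges weakly, as $X\to\infty$, to the law $\mu$ on $H(\Omega)$ of the random Euler product $L(s,\omega)=\prod_p(1-\omega(p)p^{-s})^{-1}$, where $\omega=(\omega(p))_p$ is a sequence of independent random variables, each taking values in $\{-1,0,1\}$ with mean zero (the value $0$, coming from ramification, has probability tending to $0$ as $p\to\infty$); this product converges almost surely in $H(\Omega)$ because $\Omega\subset D(1/2,1)$. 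The second step is to show $f\in\operatorname{supp}\mu$. Given both, the set $U=\{g\in H(\Omega):\sup_{s\in K}|g(s)-f(s)|<\varepsilon\}$ is open in $H(\Omega)$ with $\mu(U)>0$, so the inequality $\liminf_{X\to\infty}\mu_X^{\pm}(U)\ge\mu(U)$ (valid for open sets under weak convergence) gives $\liminf_{X\to\infty}\mu_X^{\pm}(U)>0$, which unwinds to exactly \eqref{12-2}.

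For the limit theorem I would argue in two stages. First a finite-dimensional equidistribution statement: for any finite set $S$ of primes, the distribution of $(\chi_d(p))_{p\in S}$, as $d$ runs over $\Lambda^{\pm}(X)$, converges as $X\to\infty$ to the law of $(\omega(p))_{p\in S}$. Via quadratic reciprocity this reduces to counting fundamental discriminants in prescribed residue classes to moduli built from the primes of $S$ (the prime $2$ being handled through classes modulo $8$), which follows from the orthogonality of Dirichlet characters together with standard counts of fundamental discriminants in arithmetic progressions. Second, I would pass from finite-dimensional data to $H(\Omega)$ by truncation: an approximate functional equation together with the large sieve for real characters yields, for every $\sigma_0>1/2$, a mean-square bound of the shape $(\#\Lambda^{\pm}(X))^{-1}\sum_{d\in\Lambda^{\pm}(X)}\int_{\mathbb{R}}|L(\sigma_0+it,\chi_d)|^2 w(t)\,dt\ll 1$ with a suitable smooth weight $w$; this simultaneously forces tightness of $\{\mu_X^{\pm}\}$ and shows that the finite Euler products $\prod_{p\le N}(1-\chi_d(p)p^{-s})^{-1}$ approximate $L(\cdot,\chi_d)$ in mean over $\Lambda^{\pm}(X)$, and combined with the first stage it gives weak convergence to $\mu$.

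The computation of $\operatorname{supp}\mu$ is where the hypotheses on $f$ enter. Because each $\omega(p)$ is real, $\overline{L(s,\omega)}=L(\bar s,\omega)$, so every element of $\operatorname{supp}\mu$ is real on $\Omega\cap\mathbb{R}$; moreover each factor $(1\pm p^{-\sigma})^{-1}$ is positive for real $\sigma>1/2$, so every element of $\operatorname{supp}\mu$ is nonnegative on $\Omega\cap\mathbb{R}$ --- which is exactly why one must demand that $f$ be non-vanishing and positive there. For the converse inclusion, write, for $\omega(p)\in\{-1,1\}$, $\log(1-\omega(p)p^{-s})^{-1}=\omega(p)u_p(s)+v_p(s)$ with $u_p(s)=\tfrac12\log\frac{1+p^{-s}}{1-p^{-s}}=p^{-s}+O(p^{-3\sigma})$ and $v_p(s)=-\tfrac12\log(1-p^{-2s})$, the latter deterministic with $\sum_p v_p$ convergent on $\Omega$. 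Then $f\in\operatorname{supp}\mu$ follows once one knows that the set of convergent sums $\sum_p\epsilon_p u_p(s)$ with $\epsilon_p\in\{-1,1\}$ is dense in $H(\Omega)$: picking such a sum within $\eta$ of $\log f-\sum_p v_p$ uniformly on $K$, one freezes the corresponding finitely many $\omega(p)$ at those signs (a positive-probability event) and estimates the random tail $\sum_{p>N}\log(1-\omega(p)p^{-s})^{-1}$ by Kolmogorov's maximal inequality, which yields $\mu(U)>0$. The required density rests on the two facts that $\sum_p\|u_p\|^2<\infty$ while $\sum_p|\langle u_p,\ell\rangle|=\infty$ for every nonzero continuous linear functional $\ell$ on $H(\Omega)$ --- the latter being the classical lemma exploiting $u_p\sim p^{-s}$ together with the linear independence and density of $\{\log p\}_p$ --- whence a rearrangement theorem of Pecherski{\u\i} type, in its $\pm1$ form and applied on an exhaustion of $\Omega$ by Bergman spaces so as to recover the compact-open topology, gives the density.

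The main difficulty is this last density assertion with coefficients restricted to $\{-1,1\}$. In the Voronin--Bagchi argument the target is assembled from terms $e^{i\theta_p}u_p(s)$ with continuously varying phases, so the rearrangement has ample room; here only discrete sign changes are permitted, and it is precisely this rigidity that confines the theorem to target functions that are positive on the real axis. A secondary technical point is that the equidistribution of $(\chi_d(p))_{p\in S}$ must be established with explicit control as the modulus grows, so that the truncation step of the limit theorem is legitimate --- this is where the large sieve, rather than bare orthogonality, is needed.
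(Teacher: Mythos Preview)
The paper you are working from is a survey; it states Theorem~\ref{thm12-2} with attribution to Mishou and Nagoshi \cite{MisNag06} but gives no proof, so there is no in-paper argument to compare yours against. What the survey does supply is context: it places this result among the $\chi$-universality theorems and stresses that the restriction of quadratic characters to real values is what forces the extra hypothesis that $f$ be positive on $\Omega\cap\mathbb{R}$.

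Your outline is in line with the approach actually taken in \cite{MisNag06}: a functional limit theorem for the empirical measures $\mu_X^{\pm}$ on $H(\Omega)$, identification of the limit as the law of a random real-coefficient Euler product, and a support computation. You have correctly isolated the genuine novelty relative to Voronin--Bagchi, namely that the random phases are confined to $\{-1,0,1\}$ rather than the full circle, and that this is precisely what both forces the positivity hypothesis on $f$ and makes the density/rearrangement step harder. Your identification of the two analytic inputs---equidistribution of $(\chi_d(p))_{p\in S}$ over fundamental discriminants via reciprocity and counting in progressions, and mean-square control from the large sieve for real characters to handle truncation and tightness---matches what is needed.

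One point to tighten: the density statement you invoke at the end (that convergent sums $\sum_p\epsilon_p u_p$ with $\epsilon_p\in\{-1,1\}$ are dense in the relevant function space) is not a direct consequence of Pecherski{\u\i}'s theorem, which concerns rearrangements rather than sign choices. In the original paper this step is handled by a more hands-on construction adapted to $\pm1$ coefficients, exploiting the symmetry of $\Omega$ about the real axis and the positivity of $f$ there; you should expect to work harder here than a one-line appeal to a rearrangement theorem suggests.
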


This theorem especially
implies that for any $s\in D(1/2,1)\setminus\mathbb{R}$, the set
$\{L(s,\chi_d)\;|\;d\in\Lambda^{\pm}\}$ is dense in $\mathbb{C}$, and for any 
real number $\sigma$ with $1/2<\sigma<1$, the set 
$\{L(\sigma,\chi_d)\;|\;d\in\Lambda^{\pm}\}$ is dense in the set of
positive real numbers $\mathbb{R}_+$.

In the same paper Mishou and Nagoshi also studied the situation on the line
$\Re s=1$, and proved that the set $\{L(1,\chi_d)\;|\;d\in\Lambda^{\pm}\}$
is dense in $\mathbb{R}_+$.   Therefore we can deduce denseness results on class
numbers of quadratic fields.   Let $h(d)$ be the class number of 
$\mathbb{Q}(\sqrt{d})$, and when $d>0$, let $\varepsilon(d)$ be the fundamental
unit of $\mathbb{Q}(\sqrt{d})$.   Then the above result implies that both the sets
$$
\left\{\frac{h(d)\log\varepsilon(d)}{\sqrt{d}}\;\left|\;d\in\Lambda^+\right.
\right\}, \qquad
\left\{\frac{h(d)}{\sqrt{d}}\;\left|\;d\in\Lambda^-\right.\right\}
$$
are dense in $\mathbb{R}_+$.

In \cite{MisNag06b} \cite{MisNag12}, the same type of problem for prime discriminants
are studied.   As an application, in \cite{MisNag12} Mishou and Nagoshi gave a
quantitative result on a problem of Ayoub, Chowla and Walum on certain character
sums \cite{AyoChoWal67}.   
In \cite{MisNag06c} Mishou and Nagoshi gave some conditions equivalent to
the Riemann hypothesis from their viewpoint.

The universality in $d$-aspect for Hecke $L$-functions of class group characters
for imaginary quadratic fields are studied by Mishou \cite{Mis11}.
In \cite{Mis09}, Mishou considered cubic characters associated with the field
$\mathbb{Q}(\sqrt{-3})$, and proved a universality theorem for associated Hecke
$L$-functions in cubic character aspect.

\section{Other applications}\label{sec-13}

So far we mentioned applications of universality to the theory of distribution
of zeros (Section \ref{sec-5}), to the Riemann hypothesis (Section \ref{sec-9}),
and to algebraic number theory (Section \ref{sec-12}).
Those applications, however, do not exhaust the potentiality of universality
theory.   In this section we discuss other applications of universality.

In Section \ref{sec-1} we mentioned that Voronin, before proving his
universality theorem, obtained the multi-dimensional denseness theorem (Theorem
\ref{thm1-2}) of $\zeta(s)$ and its derivatives.   However, now, we can say that
Theorem \ref{thm1-2} is just an immediate consequence of
the universality theorem (see \cite[Theorem 6.6.2]{Lau96}).
Moreover, from Theorem \ref{thm1-2} it is easy to obtain the following
functional-independence property of $\zeta(s)$.

\begin{thm}\label{thm13-1}
Let $f_l:\mathbb{C}^m\to\mathbb{C}$ $(0\leq l\leq n)$ be continuous functions, and 
assume that the equality
\begin{align}\label{13-1}
\sum_{l=0}^n s^l f_l(\zeta(s),\zeta^{\prime}(s),\ldots,\zeta^{(m-1)}(s))=0
\end{align}
holds for all $s$.   Then $f_l\equiv 0$ for $0\leq l\leq n$.
\end{thm}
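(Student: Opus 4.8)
The plan is to derive this functional-independence statement directly from the denseness of the curve $\{(\zeta(s),\zeta'(s),\ldots,\zeta^{(m-1)}(s))\;|\;s\in\mathbb{C}\}$ in $\mathbb{C}^m$, which is Theorem \ref{thm1-2} (in the strip $1/2<\sigma\le 1$; that half-line of values already suffices). Suppose, for contradiction, that the identity \eqref{13-1} holds for all $s$ but not all $f_l$ vanish identically. Let $n_0$ be the smallest index for which $f_{n_0}\not\equiv 0$; actually it is cleaner to argue by induction on $n$, peeling off coefficients one at a time, so let me first isolate $f_0$.

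First I would fix a point $\mathbf{a}=(a_0,\ldots,a_{m-1})\in\mathbb{C}^m$ with $f_0(\mathbf{a})\neq 0$, which exists since $f_0\not\equiv 0$. By continuity of $f_0$ there is a neighborhood $V$ of $\mathbf{a}$ on which $|f_0|$ stays bounded below by some $c>0$, while all the $f_l$ stay bounded above by some $M$ on $V$. By Theorem \ref{thm1-2}, for each $\sigma_j\uparrow 1$ (say $\sigma_j = 1-1/j$) the set of $t$ with $(\zeta(\sigma_j+it),\ldots,\zeta^{(m-1)}(\sigma_j+it))\in V$ is nonempty; pick such a point $s_j=\sigma_j+it_j$. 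Then from \eqref{13-1} we get $|f_0(\ldots)| = |\sum_{l=1}^n s_j^l f_l(\ldots)|\le M\sum_{l=1}^n |s_j|^l$, so $c \le M\sum_{l=1}^n|s_j|^l$. This forces $|s_j|$ to stay bounded away from $0$, which is automatic, but it does not yet give a contradiction — the issue is that $|s_j|$ could be large. So the denseness alone is not quite enough; I need to exploit that I can choose $s_j$ with $t_j$ in a \emph{bounded} range, which Theorem \ref{thm1-2} as stated does not guarantee.

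The fix is to use a compactness/normal-families argument instead. Since \eqref{13-1} holds identically in $s$ and each $f_l$ is continuous, define $g(s)=\sum_{l=0}^n s^l f_l(\zeta(s),\ldots,\zeta^{(m-1)}(s))$, which is identically $0$. Now the key point: the vector $(\zeta(s),\ldots,\zeta^{(m-1)}(s))$ takes values in (the closure of) a dense subset of $\mathbb{C}^m$ already when $s$ ranges over any fixed vertical strip; more precisely, Theorem \ref{thm1-2} gives that for \emph{every} fixed $\sigma\in(1/2,1]$ the image of the single vertical line $\Re s=\sigma$ is dense in $\mathbb{C}^m$. Fix one such $\sigma$. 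Then for each $\mathbf{b}\in\mathbb{C}^m$ there is a sequence $t_k$ with $(\zeta(\sigma+it_k),\ldots)\to\mathbf{b}$. Along a subsequence $s_k/|s_k|$ and $\arg s_k$ behave controllably; dividing \eqref{13-1} by $s_k^n$ and letting $k\to\infty$ (using that $\sigma$ is fixed so $|s_k|\to\infty$ iff $|t_k|\to\infty$) yields $f_n(\mathbf{b})=0$. Since $\mathbf{b}$ was arbitrary, $f_n\equiv 0$. Now the sum in \eqref{13-1} has top index $n-1$, and induction on $n$ finishes: the base case $n=0$ says $f_0(\zeta(s),\ldots)=0$ for all $s$, and density of the image gives $f_0\equiv 0$.

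The hard part will be making the limiting argument in the previous paragraph fully rigorous: when I divide by $s_k^n$ I must be sure I can choose the approximating sequence $t_k$ so that $|t_k|\to\infty$ (so the lower-order terms $s_k^l/s_k^n$ genuinely vanish), and simultaneously keep $(\zeta(\sigma+it_k),\ldots)$ converging to the prescribed $\mathbf{b}$. This is fine because the set of $t$ achieving a given approximation has positive lower density (hence is unbounded), so one can always extract an unbounded such sequence; I would spell this out by intersecting, for each $k$, the approximation set for radius $1/k$ with the ray $[k,\infty)$, which is nonempty by the positive-density statement underlying Theorem \ref{thm1-2}. Once that selection is justified, the rest is the routine induction sketched above, and no further input beyond Theorem \ref{thm1-2} is needed.
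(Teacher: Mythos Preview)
Your final argument is correct and is essentially the standard proof (the one the paper points to via \cite[Theorem 6.6.3]{Lau96}): fix a $\sigma\in(1/2,1]$, use denseness to push the vector $(\zeta,\zeta',\ldots,\zeta^{(m-1)})(\sigma+it_k)$ toward an arbitrary $\mathbf{b}$ with $|t_k|\to\infty$, divide \eqref{13-1} by $s_k^{n}$, and peel off $f_n$ by induction. Your identification of the one subtle point---guaranteeing $|t_k|\to\infty$---and your fix via positive lower density is exactly right, and is legitimate here because the paper itself stresses (just before Theorem~\ref{thm13-1}) that Theorem~\ref{thm1-2} is deduced from Theorem~\ref{thm1-1}, which carries the positive-density conclusion; you could also bypass this by noting that for $\mathbf{b}$ outside the (measure-zero) image of the line $\Re s=\sigma$ any approximating sequence is automatically unbounded, and then use continuity of $f_n$. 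The opening paragraph of your proposal is a discarded false start and should simply be deleted in a clean write-up.
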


This type of result was first noticed by Voronin himself
(\cite{Vor75b} \cite{Vor85}); see also \cite[Theorem 6.6.3]{Lau96}.

When $f_l$'s are polynomials, then \eqref{13-1} is an algebraic differential
equation.   Therefore Theorem \ref{thm13-1} in this case implies that $\zeta(s)$ 
does not satisfy any non-trivial algebraic differential equation.   This property was
already noticed by Hilbert in his famous address \cite{Hil00} at 
the 2nd International
Congress of Mathematicians (Paris, 1900).    Theorem \ref{thm13-1} is a
generalization of this algebraic-independence property.

Similarly to the case of $\zeta(s)$, if a Dirichlet series $\varphi(s)$ is
universal, it is easy to prove the theorems analogous to Theorem \ref{thm1-2} and
Theorem \ref{thm13-1} for $\varphi(s)$.

An application of the universality to the problem on Dirichlet polynomials
was done by Andersson \cite{And99}.
He used the universality theorem to show that several
conjectures on lower bounds of certain integrals of Dirichlet polynomials, 
proposed by Ramachandra \cite{Ram92} and Balasubramanian and Ramachandra
\cite{BalRam95} are false.

The universality property was applied even in physics; see Gutzwiller
\cite{Gut83}, Bitar, Khuri and Ren \cite{BKR91}.

\section{The general notion of universality}\label{sec-14}

The main theme of the present article is the universality for zeta and 
$L$-functions.    However, the notion of universality was first introduced in
mathematics, under a very different motivation.

The first discovery of the universality phenomenon is usually attributed to
M. Fekete (1914/15, reported in \cite{Pal}), 
who proved that there exists a real power series 
$\sum_{n=1}^{\infty}a_n x^n$ such that, for any continuous
$f:[-1,1]\to\mathbb{R}$ with $f(0)=0$ we can choose positive integers 
$m_1,m_2,\ldots$ for which
$$
\sum_{n=1}^{m_k}a_n x^n\to f(x) \qquad(k\to\infty)
$$
holds uniformly on $[-1,1]$.   The proof is based on Weierstrass' approximation
theorem.

G. D. Birkhoff \cite{Bir29} proved that there exists an entire function $\psi(z)$ such
that, for any entire function $f(z)$, we can choose complex numbers $a_1,a_2,\ldots$ 
for which $\psi(z+a_k)\to f(z)$ (as $k\to\infty$) uniformly in any compact subset of
$\mathbb{C}$.

The terminology ``universality'' was first used by Marcinkiewicz \cite{Mar35}.
Various functions satisfying some property similar to those discovered by Fekete and
Birkhoff are known.   However, before the work of Voronin \cite{Vor75}, all of
those functions were constructed very artificially.    So far the class of zeta
and $L$-functions is the only ``natural'' class of functions for which the
universality property can be proved.   For the more detailed history of this
general notion of universality, see Grosse-Erdmann \cite{Gro99} and
Steuding \cite[Appendix]{Ste07}
\footnote{In the same Appendix, Steuding mentioned a $p$-adic version of Fekete's
theorem, which was originally proved in \cite{Ste02}.}
.

It is to be noted that the real origin of the whole theory is
Riemann's theorem that a conditionally convergent series can be
convergent (or divergent) to any value after some suitable rearrangement.
In fact, Fekete's result may be regarded as an analogue of Riemann's theorem
for continuous functions, while Pecherski{\u\i}'s theorem \cite{Pec73}
(mentioned in Section \ref{sec-1} as an essential tool in Voronin's proof)
gives an analogue of Riemann's theorem in Hilbert spaces.

A very general definition of universality was proposed by Grosse-Erdmann 
\cite{Gro87}\cite{Gro99}.

\begin{defin}\label{defin14-1}
Let $X$, $Y$ be topological spaces, $W$ be a non-empty closed subset of $Y$, 
and $T_{\tau}:X\to Y$ ($\tau\in I$) be a family
of mappings with the index set $I$.   We call $x\in X$ 
{\it universal with respect to $W$} if the closure of the set
$\{T_{\tau}(x)\;|\;\tau\in I\}$ contains $W$.
\end{defin}

Let $K$ be as in Theorem \ref{thm1-1}, $X=Y=H(K^{\circ})$, where $K^{\circ}$ is
the interior of $K$.
Then obviously $H_0^c(K)\subset H(K^{\circ})$.
Put $W=\overline{H_0^c(K)}$ (the topological closure of $H_0^c(K)$ in the space
$H(K^{\circ})$).    Define $T_{\tau}$ by
$T_{\tau}(f(z))=f(z+i\tau)$ for $f\in H(K^{\circ})$.
Then Theorem \ref{thm1-1} implies that any element of $H_0^c(K)$ can be
approximated by some suitable element of 
$\{T_{\tau}(\zeta)\;|\;\tau\in \mathbb{R}\}$.   Therefore Theorem \ref{thm1-1}
asserts that the Riemann zeta-function $\zeta(s)$ is universal with respect to
$\overline{H_0^c(K)}$ in the sense of Definition \ref{defin14-1}.

The notion of joint universality can also be formulated in this general setting.

\begin{defin}\label{defin14-2}
Let $X,Y_1,\ldots,Y_r$ be topological spaces, and
$T_{\tau}^{(j)}:X\to Y_j$ ($\tau\in I$, $1\leq j\leq r$) be families of mappings.
We call $x_1,\ldots,x_r\in X$ {\it jointly universal} if the set
$\{(T_{\tau}^{(1)}(x_1),\ldots,T_{\tau}^{(r)}(x_r)\;|\;\tau\in I\}$ 
is dense in $Y_1\times\cdots\times Y_r$.
\end{defin}

\begin{rem}\label{rem14-1}
The case $r=1$ of Definition \ref{defin14-2} is the case $W=Y$ of Definition 
\ref{defin14-1}.
\end{rem}

In Section \ref{sec-1} we mentioned that the Kronecker-Weyl approximation theorem
(see Remark \ref{rem1-0}) is used in the proof of Theorem \ref{thm1-1}.    
We can see that the
Kronecker-Weyl theorem itself implies a certain universality phenomenon.
Let $S^1=\{z\in\mathbb{C}\;|\;|z|=1\}$, and consider the situation when
$X=\mathbb{R}$, $Y_1=\cdots=Y_r=S^1$ in Definition \ref{defin14-2}.
Define $T_{\tau}^{(1)}=\cdots=T_{\tau}^{(r)}=T_{\tau}:\mathbb{R}\to S^1$ by
$T_{\tau}(x)=e^{2\pi i\tau x}$.   Then the Kronecker-Weyl theorem implies that,
if $\alpha_1,\ldots,\alpha_r\in\mathbb{R}$ are linearly independent over
$\mathbb{Q}$, then the orbit of
$$
(T_{\tau}(\alpha_1),\ldots,T_{\tau}(\alpha_r))
=(e^{2\pi i\tau\alpha_1},\ldots,e^{2\pi i\tau\alpha_r})
$$
is dense in $S^1\times\cdots\times S^1$.   Therefore 
$\alpha_1,\ldots,\alpha_r$ are jointly universal.

The above observation shows that both the Voronin theorem and the
Kronecker-Weyl theorem express certain universality properties.
Is it possible to combine these two universality theorems?
The answer is yes, and we will discuss this matter in the next section.

\section{The hybrid universality}\label{sec-15}

The first affirmative answer to the question raised at the end of Section 
\ref{sec-14} was given by Gonek \cite{Gon79}, and a slightly general result
was later obtained by Kaczorowski and Kulas \cite{KacKul07}.

\begin{thm}\label{thm15-1}
{\rm (Gonek \cite{Gon79}, Kaczorowski and Kulas \cite{KacKul07})}
Let $K$ be a compact subset of $D(1/2,1)$, $f_1,\ldots,f_r\in H_0^c(K)$,
$\chi_1,\ldots,\chi_r$ be pairwise non-equivalent Dirichlet characters,
$z>0$, and $(\theta_p)_{p\leq z}$ be a sequence of real numbers indexed by
prime numbers up to $z$.   Then, for any $\varepsilon>0$, 
\begin{align}\label{15-1}
\liminf_{T\to\infty}\frac{1}{T}{\rm meas}\left\{\tau\in[0,T]\;\left|\;                  
\max_{1\leq j\leq r}\sup_{s\in K}|L(s+i\tau,\chi_j)-f_j(s)|<\varepsilon,\right.
\right.\\
\left.\max_{p\leq z}\left|\left|\tau\frac{\log p}{2\pi}-\theta_p\right|\right|
<\varepsilon\right\}>0 \notag
\end{align}
holds.
\end{thm}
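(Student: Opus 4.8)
The plan is to re-run the proof of the joint universality theorem for Dirichlet $L$-functions (Theorem~\ref{thm4-1}) while keeping explicit track of the dependence of everything on the quantities $p^{-i\tau}=e^{-2\pi i x_p(\tau)}$, where $x_p(\tau)=\tau\log p/(2\pi)\bmod 1$, and then to observe that the linear independence of $\{\log p\}_p$ over $\mathbb{Q}$ which powers that proof is exactly what permits one to impose the extra Kronecker--Weyl constraints $\|x_p(\tau)-\theta_p\|<\varepsilon$ for $p\le z$. In other words, the two halves of \eqref{15-1} take place on the same finite-dimensional torus, and the only thing that has to be added to the joint universality argument is that its output point may be prescribed over the finitely many coordinates indexed by $p\le z$.

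First I would replace $L(s,\chi_j)$ by its truncated Euler product $L_N(s,\chi_j)=\prod_{p\le N}(1-\chi_j(p)p^{-s})^{-1}$ for a large parameter $N\ge z$. By the classical mean-value machinery for Dirichlet $L$-functions, valid in the half-plane $\Re s>1/2$ (this is where the left edge $1/2$ of $D(1/2,1)$ enters), one has for every $\delta>0$
\begin{align*}
\lim_{N\to\infty}\limsup_{T\to\infty}\frac1T\,{\rm meas}\Bigl\{\tau\in[0,T]:\max_{1\le j\le r}\sup_{s\in K}|L(s+i\tau,\chi_j)-L_N(s+i\tau,\chi_j)|\ge\delta\Bigr\}=0,
\end{align*}
so it suffices to produce a set of $\tau$ of positive lower density on which simultaneously $\sup_{s\in K}|L_N(s+i\tau,\chi_j)-f_j(s)|$ is small for every $j$ and $\|x_p(\tau)-\theta_p\|<\varepsilon$ for every $p\le z$.

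Now $L_N(s+i\tau,\chi_j)$ depends on $\tau$ only through the point $\mathbf{x}(\tau)=(x_p(\tau))_{p\le N}$ of the torus $\mathbb{T}^{\pi(N)}$ (one coordinate per prime $p\le N$), via the continuous map $\mathbf{x}\mapsto\prod_{p\le N}(1-\chi_j(p)e^{-2\pi i x_p}p^{-s})^{-1}$. After using Mergelyan's theorem (Remark~\ref{rem1-01}) to replace each $f_j$ by $e^{g_j}$ with $g_j$ a polynomial, the joint universality argument produces, by a rearrangement argument based on Pecherski{\u\i}'s theorem---whose hypotheses are met because for $\Re s>1/2$ only the first powers in the Euler factors are conditionally convergent, while the orthogonality of the characters $\chi_j$ decouples the targets $g_j$---a point $\widehat{\mathbf x}\in\mathbb{T}^{\pi(N)}$ such that a whole neighbourhood of $\widehat{\mathbf x}$ maps into the region where $\sup_{s\in K}|L_N(s+i\tau,\chi_j)-f_j(s)|<\varepsilon/2$ for all $j$. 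The extra observation needed here is that the $\pi(z)$ coordinates of $\widehat{\mathbf x}$ indexed by $p\le z$ may be prescribed to equal $\theta_p\bmod 1$: fixing finitely many phases only removes finitely many terms from the conditionally convergent series to which Pecherski{\u\i}'s theorem is applied, which shifts the set of attainable sums by a fixed element and hence does not shrink it (the divergence $\sum_{p>z}1/p=\infty$, which is what the hypothesis of the rearrangement theorem really uses, is unaffected). Let $U'\subset\mathbb{T}^{\pi(N)}$ be a small ball around this $\widehat{\mathbf x}$; it is nonempty and open, and on $U'$ both the approximation of the $f_j$ and the inequalities $\|x_p-\theta_p\|<\varepsilon$ ($p\le z$) hold.

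Finally, since $\{\log p:p\le N\}$ is linearly independent over $\mathbb{Q}$, the Kronecker--Weyl theorem in its quantitative (Weyl equidistribution) form---sharpening Remark~\ref{rem1-0}---gives
\begin{align*}
\liminf_{T\to\infty}\frac1T\,{\rm meas}\{\tau\in[0,T]:\mathbf{x}(\tau)\in U'\}={\rm meas}(U')>0.
\end{align*}
Combining this with the mean-value reduction above (the exceptional set there has density tending to $0$ with $N$, hence can be removed while keeping a positive proportion) yields \eqref{15-1}. The step I expect to be the genuine obstacle is the rearrangement part together with the verification that prescribing finitely many phases does not destroy it---that is, the interplay between Pecherski{\u\i}'s theorem, the conditional convergence of $\sum_p\chi_j(p)p^{-s-i\tau}$ in the relevant function space over $K$, and the orthogonality relations making the several $f_j$ independent; the mean-value reduction and the Kronecker--Weyl bookkeeping are comparatively routine.
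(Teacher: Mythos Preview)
The paper is a survey and does not supply its own proof of Theorem~\ref{thm15-1}; it merely attributes the result to Gonek \cite{Gon79} and Kaczorowski--Kulas \cite{KacKul07}. So there is no in-paper argument to compare against. That said, your outline is precisely the route taken in those original sources: reduce to a finite Euler product depending only on the torus point $(p^{-i\tau})_{p\le N}$, run the Voronin--Gonek rearrangement argument to locate a good target point on $\mathbb{T}^{\pi(N)}$, observe that the finitely many coordinates $p\le z$ may be frozen to $\theta_p$ without affecting the Pecherski{\u\i} step, and finish with Weyl equidistribution. Your identification of the key new ingredient---that prescribing $\pi(z)$ phases only subtracts a fixed vector from the conditionally convergent series and leaves the divergence condition $\sum_{p>z}p^{-2\sigma}=\infty$ intact---is exactly right and is the whole content of ``hybrid'' over ``joint''.

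Two places where a full write-up would need more care than your sketch suggests. First, the mean-value reduction: the raw truncated Euler product $\prod_{p\le N}(1-\chi_j(p)p^{-s})^{-1}$ does not approximate $L(s,\chi_j)$ in $L^2$-mean on vertical lines near $\sigma=1/2$; the standard device is to work with $\log L$ and the linearised sum $\sum_{p\le N}\chi_j(p)p^{-s-i\tau}$ (as in \cite{KarVor92}, Chapter~VII, or \cite{Gon79}), or else to pass through Bagchi's limit-theorem machinery. Your displayed density statement is correct in spirit but needs this reformulation. Second, the Pecherski{\u\i} step in the joint setting requires not just $\sum_{p>z}1/p=\infty$ but the full Hilbert-space hypothesis on the sequence $(\chi_1(p)p^{-s},\ldots,\chi_r(p)p^{-s})_{p>z}$ in $H^2(K)^r$; the orthogonality of characters enters in verifying that this sequence spans densely, and you should expect to invoke the prime number theorem in arithmetic progressions (or at least Dirichlet's theorem) at that point. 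Neither of these is a genuine obstruction, but both are where the actual work lies.
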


The combination of the universality of Voronin type and of Kronecker-Weyl type
is now called the {\it hybrid universality}.   The above theorem is therefore an
example of the hybrid joint universality.

Pa{\'n}kowski \cite{Pan10} proved that the second inequality in \eqref{15-1} can be
replaced by $\max_{1\leq k\leq m}||\tau\alpha_k-\theta_k||<\varepsilon$, where
$\alpha_1,\ldots,\alpha_m$ are real numbers which are linearly independent over
$\mathbb{Q}$, and $\theta_1,\ldots,\theta_m$ are arbitrary real numbers.
This is exactly the same inequality as in the Kronecker-Weyl theorem (see
Remark \ref{rem1-0}).

In the same paper Pa{\'n}kowski remarked that the same statement can be shown 
for more general $L$-functions which have Euler products.   The hybrid joint
universality for some zeta-functions without Euler products was discussed in
Pa{\'n}kowski \cite{Pan13}.

Hybrid universality theorems are quite useful in applications.
Gonek \cite{Gon79} used his hybrid universality theorem to show the joint universality
for Dedekind zeta-functions of Abelian number fields (mentioned in Section
\ref{sec-3}).   The key fact here is that those Dedekind zeta-functions can be
written as products of Dirichlet $L$-functions.   On the other hand, the aim of 
Kaczorowski and Kulas \cite{KacKul07} was to study the distribution of zeros of
linear combinations of the form $\sum P_j(s)L(s,\chi_j)$, where $P_j(s)$ are
Dirichlet polynomials.   Kaczorowski and Kulas applied Theorem \ref{thm15-1} to
show a theorem\footnote{This theorem was later sharpened by Ki and Lee
\cite{KiLee11} by using the method of mean motions \cite{JesTor45} 
\cite{BorJes48}.}, similar to Theorem \ref{thm5-2}, for such linear combinations.

Sander and J. Steuding \cite{SanSte06} also considered the universality for sums, or
products of Dirichlet $L$-functions.   In particular they proved the joint
universality for Hurwitz zeta-functions $\zeta(s,a/q)$ ($1\leq a\leq q$) under
a certain condition on target functions.
Hurwitz zeta-functions usually do not have Euler products, but when $a=q$
(and when $q$ is even and $a=q/2$) the corresponding Hurwitz zeta-function is
essentially the Riemann zeta-function and hence has the Euler product.
Therefore the result of Sander and J. Steuding is an example of mixed universality
(see Section \ref{sec-8}).

In the paper of Kaczorowski and Kulas \cite{KacKul07}, the coefficients $P_j(s)$
of linear combinations are Dirichlet polynomials.   Nakamura and Pa{\'n}kowski
\cite{NakPan12b} considered a more general situation when $P_j(s)$ are Dirichlet 
series.  Their general statement is as follows.

\begin{thm}\label{thm15-2}
{\rm (Nakamura and Pa{\'n}kowski \cite{NakPan12b})}
Let $P_1(s),\ldots,P_r(s)$ ($r\geq 2$) be general Dirichlet series, not identically
vanishing, absolutely convergent in $\Re s>1/2$.   Moreover assume that at least
two of those are non-vanishing in $D(1/2,1)$.   Let $L_1(s),\ldots,L_r(s)$
be hybridly jointly universal in the above sense.   Then
$L(s)=\sum_{j=1}^rP_j(s)L_j(s)$   
is strongly universal in $D(1/2,1)$.
\end{thm}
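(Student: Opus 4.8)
The plan is to combine the hybrid joint universality of $L_1(s),\ldots,L_r(s)$ with the Kronecker-Weyl part of that statement, in order to force the Dirichlet series $P_j(s)$ to behave like prescribed constants on a set of $\tau$ of positive lower density. First I would fix a compact $K\subset D(1/2,1)$ with connected complement and a target $f\in H^c(K)$ (\emph{possibly vanishing}), and reduce to the case where two of the $P_j$ are non-vanishing on $K$; relabel so these are $P_1,P_2$. Since $P_1,P_2$ are absolutely convergent in $\Re s>1/2$, for any $\delta>0$ their tails can be made uniformly small on $K$, so each $P_j(s+i\tau)$ is, up to $\delta$, a finite Dirichlet polynomial $\sum_{n\le N} b_{j,n} n^{-s-i\tau}=\sum_{n\le N}b_{j,n}n^{-s}\exp(-i\tau\log n)$. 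The values $\exp(-i\tau\log n)$ for $n\le N$ are governed by $\|\tau\frac{\log p}{2\pi}-\theta_p\|$ for the primes $p\le N$ (writing each $n$ multiplicatively), so by the Kronecker-Weyl part of the hybrid statement I can prescribe, up to small error, arbitrary unimodular values for $\exp(-i\tau\log n)$ simultaneously for all $n\le N$.

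The key step is then a \emph{two-parameter matching argument}. Choose the phases $(\theta_p)$ so that $P_1(s+i\tau)$ is close to a constant $c_1\ne 0$ and $P_2(s+i\tau)$ is close to a constant $c_2\ne 0$ on $K$, uniformly. Because $P_1,P_2$ are non-vanishing on $K$ (a connected-complement compact set where $\log$ can be chosen single-valued), and because the attainable set of phase vectors $(\exp(-i\tau\log n))_{n\le N}$ is dense in the torus, one can arrange $c_1,c_2$ to be essentially any pair of nonzero complex numbers of the appropriate modulus; in particular one can pick them so that $c_1$ and $c_2$ are ``generic,'' i.e. $c_1/c_2$ avoids any bad value. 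Simultaneously, by the Voronin part of hybrid joint universality, I can force $L_1(s+i\tau)$ to approximate a target $g_1(s)$ and $L_2(s+i\tau)$ to approximate $g_2(s)$, where $g_1,g_2\in H_0^c(K)$ are non-vanishing functions to be chosen, and all remaining $L_j(s+i\tau)$ ($j\ge3$) together with all remaining $P_j(s+i\tau)$ to be close to fixed (harmless) quantities. The point is that the Kronecker-Weyl conditions and the Voronin conditions can be imposed \emph{at the same time} on a single positive-lower-density set of $\tau$, because that is precisely what hybrid joint universality asserts.

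It remains to solve the approximation problem $c_1 g_1(s)+c_2 g_2(s)+(\text{bounded remainder})\approx f(s)$ on $K$ with $g_1,g_2$ non-vanishing. This is where the freedom to let $f$ vanish is recovered: although neither $g_1$ nor $g_2$ may vanish, a generic linear combination $c_1g_1+c_2g_2$ certainly can, so one writes $f = c_1 g_1 + c_2 g_2 - R$, picks $g_2\in H_0^c(K)$ arbitrary (say $g_2\equiv1$), absorbs the remainder $R$ coming from the $j\ge3$ terms into the right-hand side, and then needs $g_1:=c_1^{-1}(f+R-c_2 g_2)$ to be non-vanishing on $K$ for a suitable choice of $c_1,c_2$ and of the $j\ge3$ data. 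Since $H_0^c(K)$ is \emph{dense} in $H^c(K)$ (this uses Mergelyan's theorem together with the connected-complement hypothesis, exactly as in the standard reduction of Voronin's theorem to the non-vanishing case), the function $c_1^{-1}(f+R-c_2)$ can be perturbed by an arbitrarily small amount to land in $H_0^c(K)$; equivalently, one may first approximate $f$ itself within $\varepsilon/2$ by a function admitting such a decomposition with non-vanishing pieces, and then apply hybrid universality with accuracy $\varepsilon/2$. A careful bookkeeping of the $\delta$'s, combined with the positivity of the lower density of the set of $\tau$ satisfying all the imposed inequalities, yields the strong universality of $L(s)=\sum_j P_j(s)L_j(s)$ in $D(1/2,1)$.

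The main obstacle I expect is the decomposition step: one must show that, after fixing the bounded contribution of the indices $j\ge 3$, there genuinely exist non-vanishing $g_1,g_2\in H_0^c(K)$ and nonzero constants $c_1,c_2$ of prescribed modulus with $c_1g_1+c_2g_2$ approximating an \emph{arbitrary} (zero-admitting) $f$ to within $\varepsilon$. This requires the density of $H_0^c(K)$ in $H^c(K)$ and some care that the moduli $|c_1|,|c_2|$ dictated by $|P_1|,|P_2|$ on $K$ are not forced to be so small that the non-vanishing pieces $g_1,g_2$ would have to blow up; handling the non-constancy of $|P_j(s+i\tau)|$ across $K$ (it is only approximately constant) and ensuring uniformity over all of $K$ is the technically delicate part. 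Once this local approximation lemma is in place, the rest is a routine marriage of the Voronin and Kronecker-Weyl halves of the hybrid joint universality hypothesis.
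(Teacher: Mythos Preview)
The paper is a survey and gives no proof of this theorem; it simply quotes the result from Nakamura and Pa\'nkowski. So there is nothing in the paper to compare against, but your sketch can still be checked against the standard argument.

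Your architecture is right: use the Kronecker--Weyl half of hybrid universality to pin down $P_j(s+i\tau)$ and the Voronin half to pin down $L_j(s+i\tau)$, on one common positive-lower-density set of $\tau$. Two specific steps, however, do not work as written.

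First, you cannot make $P_j(s+i\tau)$ close to a \emph{constant} on $K$. Fixing the phases $(\theta_p)$ only makes $P_j(s+i\tau)$ close to the twisted sum $\sum_{n\le N} b_{j,n}\omega_n n^{-s}$, which is still a nonconstant function of $s$; by uniqueness of Dirichlet coefficients it cannot be uniformly close to a constant on any $K$ with nonempty interior unless $P_j$ was constant to begin with. You flag this as ``technically delicate'', but in fact it is impossible. The clean fix is to take $\theta_p=0$ for every $p\le N$: then $n^{-i\tau}\approx 1$ for all $n\le N$, so $P_j(s+i\tau)\approx P_j(s)$ uniformly on $K$. Now the hypothesis that $P_1,P_2$ are non-vanishing on $D(1/2,1)$ hands you non-vanishing \emph{functions} $P_1(s),P_2(s)$ on $K$, which is exactly what is needed.

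Second, the assertion that $H_0^c(K)$ is dense in $H^c(K)$ is false whenever $K$ has interior: by Hurwitz's theorem a uniform limit of non-vanishing holomorphic functions on a domain is either non-vanishing or identically zero, so a target with an isolated interior zero cannot be approximated from $H_0^c(K)$. (The ``standard reduction'' you allude to runs the other way, approximating a \emph{non-vanishing} $f$ by $e^{p(s)}$ via Mergelyan on $\log f$.) Luckily no density is required. After setting the targets $g_j$ for $j\ge 3$ equal to nonzero constants and writing $R(s)=\sum_{j\ge 3}P_j(s)g_j$, take $g_2\equiv M$ with $M$ a large constant: since $\inf_K|P_2|>0$, for $M$ large one has $|MP_2(s)|>\sup_K|f-R|$ on $K$, and then
\[
g_1(s)\;:=\;\frac{f(s)-R(s)-MP_2(s)}{P_1(s)}
\]
lies in $H_0^c(K)$ automatically. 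This is precisely where the hypothesis that \emph{two} of the $P_j$ are non-vanishing is used, and it removes the need for any perturbation argument. With these two corrections your outline becomes the actual proof.
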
 

As a corollary, by Theorem \ref{thm5-2} we find $N(T;\sigma_1,\sigma_2;L)\geq CT$
for the above $L(s)$, for any $\sigma_1,\sigma_2$ satisfying
$1/2<\sigma_1<\sigma_2<1$.

In the above theorem $L(s)$ is a linear form of $L_j$'s, but in
\cite{NakPan11} \cite{NakPanpre}, Nakamura and Pa{\'n}kowski obtained more general
statements; they considered polynomials of $L_j$'s whose coefficients are general 
Dirichlet series, and proved results similar to Theorem \ref{thm15-2}.
Note that when coefficients of polynomials are constants, such a result was
already given in Ka{\v c}inskait{\.e}, J. Steuding, {\v S}iau{\v c}i{\=u}nas and
{\v S}le{\v z}evi{\v c}ien{\.e} \cite{KacSteSiaSle04}.

Many important zeta and $L$-functions have such polynomial expressions.
Consequently, Nakamura and Pa{\'n}kowski succeeded in proving the inequalities
like \eqref{5-3} on the distribution of zeros of those zeta or $L$-functions,
such as zeta-functions attached to symmetric matrices (in the theory of
prehomogeneous vector spaces), Estermann zeta-functions,
Igusa zeta-functions associated with local Diophantine problems, 
spectral zeta-functions associated with Laplacians on Riemannian manifolds, 
Epstein zeta-functions (see \cite{NakPan13b}), and also various multiple
zeta-functions (of Euler-Zagier, of Barnes, of Shintani, of Witten and so on).

\section{Quantitative results}\label{sec-16}

It is an important question how to obtain quantitative information related with
universality.   For example, let
\begin{align}\label{16-1}
d(\zeta,f,K,\varepsilon)=\liminf_{T\to\infty}\frac{1}{T}{\rm meas}
\left\{\tau\in[0,T]\;\left|\;\sup_{s\in K}|\zeta(s+i\tau)-f(s)|<\varepsilon
\right.\right\}.
\end{align}

Theorem \ref{thm1-1} asserts that $d(\zeta,f,K,\varepsilon)$ is positive; but how 
to evaluate this value?   Or, how to find the smallest value of $\tau$
(which we denote by $\tau(\zeta,f,K,\varepsilon)$) satisfying
the inequality $\sup_{s\in K}|\zeta(s+i\tau)-f(s)|<\varepsilon$?
Voronin's proof gives no information on these questions, because in the course
of the proof Voronin used Pecherski{\u\i}'s rearrangement theorem, which is
ineffective.

The first attempt to get a quantitative version of the universality theorem is
due to Good \cite{Goo81}.   The fundamental idea of Good is to combine the
argument of Voronin with the method of Montgomery \cite{Mon77}, in which
Montgomery studied large values of $\log\zeta(s)$.   Instead of
Pecherski{\u\i}'s theorem, Good used convexity arguments (Hadamard's three circles
theorem and the Hahn-Banach theorem).   Also Koksma's quantitative version of
Weyl's criterion is invoked.   The statements of Good's main results are quite
complicated, but it includes a quantitative version of the discrete universality
theorem for $\zeta(s)$.

\begin{rem}\label{rem16-1}
Actually Good stated the discrete universality for $\log\zeta(s)$.   From the
universality for $\log\zeta(s)$, the universality for $\zeta(s)$ itself can be
immediately deduced by exponentiation.   A proof of the universality for 
$\log\zeta(s)$ by Voronin's original method is presented in the book of
Karatsuba and Voronin \cite[Chapter VII]{KarVor92}.
\end{rem}

Good's idea was further pursued by Garunk{\v s}tis \cite{Gar03}, who obtained
a more explicit quantitative result when $K$ is small.   
His main theorem is still rather complicated,
but as a typical special case, he showed the following inequalities.   Let 
$K=K(r)$ be as in Section \ref{sec-1}.

\begin{thm}\label{thm16-1}
{\rm (Garunk{\v s}tis \cite{Gar03})} 
Let $0<\varepsilon\leq 1/2$, $f\in H(K(0.05))$, and assume  
$\max_{s\in K(0.06)}|f(s)|\leq 1$.   Then we have
\begin{align*}
\begin{array}{ll}
d(\log\zeta,f,K(0.0001),\varepsilon)\geq \exp(-1/\varepsilon^{13}),\\
\tau(\log\zeta,f,K(0.0001),\varepsilon)\leq \exp\exp(10/\varepsilon^{13}).
\end{array}
\end{align*} 
\end{thm}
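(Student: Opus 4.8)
The plan is to follow Good's strategy, later made explicit by Garunk{\v s}tis: retain the architecture of Voronin's proof but replace its two ineffective ingredients — Pecherski{\u\i}'s rearrangement theorem and the purely qualitative Kronecker--Weyl theorem — by effective substitutes: Hadamard's three-circles theorem together with the Hahn--Banach theorem (for the rearrangement) and the Erd{\H o}s--Tur{\'a}n--Koksma discrepancy inequality, a quantitative form of the Weyl criterion (for the equidistribution), in the spirit of Montgomery's treatment of large values of $\log\zeta$; one then tracks every constant through the chain. As the theorem concerns $\log\zeta$ directly, I would work throughout with $\log\zeta(s)=-\sum_p\log(1-p^{-s})$ and its prime truncation $L_y(s)=-\sum_{p\leq y}\log(1-p^{-s})$, so that no exponentiation is needed.

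The first step is to pass from $\log\zeta$ to the finite sum $L_y$. Using the known mean-value bound $\int_0^T|\log\zeta(3/4+it)-L_y(3/4+it)|^2\,dt\leq(\varepsilon^2/C)\,T$ for an explicit $y=y(\varepsilon)$ (a fixed power of $1/\varepsilon$ suffices: the tail $\sum_{p>y}p^{-3/2}$ is then small, and the contribution of the zeros of $\zeta$ to the left of $\Re s=3/4$ — which also bounds the measure of the exceptional $\tau$ for which $\log\zeta(s+i\tau)$ is not holomorphic on $K(0.0001)$ — is $o(T)$ by the classical estimate that $\zeta$ has $o(T)$ zeros with real part $>0.7$ and imaginary part in $[0,T]$), together with a Cauchy-integral estimate on slightly shifted vertical segments to convert the mean square into a uniform bound on the small disc $K(0.0001)$, I would show that $\sup_{s\in K(0.0001)}|\log\zeta(s+i\tau)-L_y(s+i\tau)|<\varepsilon/3$ holds for $\tau$ in a subset of $[0,T]$ of measure $\geq(1-o(1))T$. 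It then remains to count $\tau$ for which the shift $L_y(s+i\tau)=-\sum_{p\leq y}\log(1-p^{-i\tau}p^{-s})$ lies within $2\varepsilon/3$ of $f$ on $K(0.0001)$.

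The second step is the effective rearrangement. Since $p^{-i\tau}$ runs over the unit circle as $\tau$ varies and, by Kronecker--Weyl, the phase vectors $(p^{-i\tau})_{p\leq y}$ are dense in the torus, it suffices to exhibit unimodular $(\omega_p)_{p\leq y}$ with $\sup_{s\in K(0.0001)}|f(s)+\sum_{p\leq y}\log(1-\omega_p p^{-s})|<\varepsilon/3$. Here I would use the normalization $\max_{K(0.06)}|f|\leq1$ and Hadamard's three-circles theorem to control $f$ and its Taylor tails across the nested discs $K(0.0001)\subset K(0.05)\subset K(0.06)$, and then run Voronin's rearrangement of conditionally convergent series — but effectively, via the Hahn--Banach theorem in the Banach space $H(K(0.0001))$, using that $\sum_p p^{-3/4}$ diverges while $\sum_p p^{-3/2}$ converges — to produce an explicit cut-off $y(\varepsilon)$ and the phases $\omega_p$, with all relevant norms quantitatively under control. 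This is the delicate heart of the argument and the step that forces the polynomial loss $1/\varepsilon^{13}$: it is where the smallness of $f$, the fixed gaps between the radii $0.0001$, $0.05$, $0.06$, and the number $\pi(y)$ of available phases must be balanced against one another.

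The third step is the quantitative counting, where Montgomery's idea enters. With the finitely many $\omega_p=\exp(-2\pi i\theta_p)$ ($p\leq y$) now fixed, choose $\delta$ small in terms of $\varepsilon$ and $y$ so that $\|\tau\log p/2\pi-\theta_p\|<\delta$ for all $p\leq y$ forces $\sup_{K(0.0001)}|L_y(s+i\tau)+\sum_{p\leq y}\log(1-\omega_p p^{-s})|<\varepsilon/3$, and bound ${\rm meas}\{\tau\in[0,T]:\|\tau\log p/2\pi-\theta_p\|<\delta\ (p\leq y)\}$ from below by the Erd{\H o}s--Tur{\'a}n--Koksma inequality. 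This reduces matters to the exponential sums $\frac1T\int_0^T\exp\bigl(-i\tau\sum_{p\leq y}h_p\log p\bigr)\,d\tau\ll\bigl(T\,|\sum_{p\leq y}h_p\log p|\bigr)^{-1}$ over integer vectors with $0<\max_p|h_p|\leq H$, combined with the elementary effective lower bound $|\sum_{p\leq y}h_p\log p|\geq(\prod_{p\leq y}p^{|h_p|})^{-1}$; pushing the discrepancy below the measure $\asymp\delta^{\pi(y)}$ of the target box then requires $T$ at least doubly exponential in $y$ — precisely the shape $\exp\exp(10/\varepsilon^{13})$ of the bound on $\tau(\log\zeta,f,K(0.0001),\varepsilon)$ — while the equidistribution simultaneously gives density $\gg\delta^{\pi(y)}\geq\exp(-1/\varepsilon^{13})$ for the chosen parameters. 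Chaining the three $\varepsilon/3$-approximations then completes the proof. The main obstacle is not conceptual but is the bookkeeping in the last two steps: making Voronin's rearrangement effective with explicit control of $y(\varepsilon)$ and the coefficient sizes, and propagating that control — together with the linear-independence bound for $\{\log p\}_{p\leq y}$ — through the discrepancy inequality while optimizing jointly over $y$, $\delta$, $H$ and $T$, so that the accumulated losses collapse to the clean exponent $13$. No individual estimate is new, but coordinating them is the real work.
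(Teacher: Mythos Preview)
The paper is a survey and does not give its own proof of this theorem; it merely records the result as a special case of Garunk{\v s}tis's main theorem and summarizes the underlying method: Good's idea of combining Voronin's argument with Montgomery's method on large values of $\log\zeta$, replacing Pecherski{\u\i}'s rearrangement theorem by convexity arguments (Hadamard's three-circles theorem and the Hahn--Banach theorem), and invoking Koksma's quantitative form of Weyl's criterion. Your outline reproduces exactly this scheme --- the passage from $\log\zeta$ to a finite prime sum, the effective rearrangement via convexity and Hahn--Banach, and the quantitative equidistribution via the Erd{\H o}s--Tur{\'a}n--Koksma inequality --- so it is fully in line with what the paper attributes to Good and Garunk{\v s}tis.
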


Besides the above work of Good and Garunk{\v s}tis,
there are various different approaches toward quantitative results.
Laurin{\v c}ikas \cite{Lau00} pointed out that quantitative information on the
speed of convergence of a certain functional limit theorem would give a
quantitative result on the universality for Lerch zeta-functions.
J. Steuding \cite{Ste03c} \cite{Ste05} considered the quantity defined by replacing
liminf on \eqref{1-3} by limsup, and discussed its upper bounds.

The author pointed out in \cite{Mat06} that from Theorem \ref{thm1-2},
by comparing the Taylor expansions of $\zeta(s+i\tau)$ and $f(s)$, it is possible
to deduce a certain weaker version of universal approximation.   On the other hand,
a quantitative version of Theorem \ref{thm1-2} was shown by Voronin himself
\cite{Vor88}.
Combining these two ideas, a quantitative version of weak universal approximation
theorem was obtained in Garunk{\v s}tis, Laurin{\v c}ikas, Matsumoto,
J. \& R. Steuding \cite{GarLauMatSteSte10}.

A nice survey on the effectivization problem is given in Laurin{\v c}ikas
\cite{Lau13b}.

\section{The universality for derived functions}\label{sec-17}

When some function $\varphi(s)$ satisfies the universality property, a natural question 
is to ask whether functions derived from $\varphi(s)$ by some standard operations,
such as $\varphi^{\prime}(s)$, $\varphi(s)^2$, $\exp(\varphi(s))$ etc, also
satisfy the universality property, or not.

We already mentioned the universality of $\log\zeta(s)$ (see Remark \ref{rem16-1}).
Concerning the derivatives, Bagchi \cite{Bag82} proved that $m$th derivatives of 
Dirichlet $L$-functions $L^{(m)}(s,\chi)$ ($m\in\mathbb{N}$) are strongly
universal.
Laurin{\v c}ikas \cite{Lau85} studied the universality for $(\zeta'/\zeta)(s)$,
and then considered the same problem for $L(s,g)$ (for a cusp form $g$) in
\cite{Lau05b} \cite{Lau05c}.   The universality for derivatives of $L$-functions
of elliptic curves was studied by Garbaliauskien{\.e} and  Laurin{\v c}ikas
\cite{GarbLau07}, and its discrete analogue was discussed by Belovas, 
Garbaliauskien{\.e} and Ivanauskait{\.e} \cite{BelGarbIva08}.

After these early attempts, Laurin{\v c}ikas \cite{Lau10c} 
(see also \cite{Lau12c}) formulated a more
general framework of {\it composite universality}.   Let $F$ be an operator
$F:H(D(1/2,1))\to H(D(1/2,1))$.   Laurin{\v c}ikas \cite{Lau10c} considered
when $F(\zeta(s))$ has the universality property.

A simple affirmative case is the Lipschitz class ${\rm Lip}(\alpha)$.   We call $F$
belongs to ${\rm Lip}(\alpha)$ when

1) for any
polynomial $q=q(s)$ and any compact $K\subset D(1/2,1)$, there exists
$q_0\in F^{-1}\{q\}$ such that $q_0(s)\neq 0$ on $K$, and

2) for any compact $K\subset D(1/2,1)$ with connected complement, there exist
$c>0$ and a compact $K_1\subset D(1/2,1)$ with connected complement, for which
$$
\sup_{s\in K}|F(g_1(s))-F(g_2(s))|\leq c\sup_{s\in K_1}|g_1(s)-g_2(s)|^{\alpha}
$$
holds for all $g_1,g_2\in H(D(1/2,1))$.

Then it is pointed out in \cite{Lau10c} that if $F\in {\rm Lip}(\alpha)$, then
$F(\zeta(s))$ has the universality property.   This claim especially includes
the proof of the universality for $\zeta'(s)$.

To prove other theorems in \cite{Lau10c}, Laurin{\v c}ikas applied the method of
functional limit theorems (cf. Mauclaire \cite{Mau13}).   Those results imply, 
for example, 
$\zeta'(s)+\zeta''(s)$, $\zeta(s)^m$ ($m$-th power), $\exp(\zeta(s))$,
and $\sin(\zeta(s))$ are universal.

Later, Laurin{\v c}ikas and his colleagues generalized the result in \cite{Lau10c}
to various other situations.

$\bullet$ Hurwitz zeta-functions (Laurin{\v c}ikas \cite{Lau13} for the single case, 
Laurin{\v c}ikas \cite{Lau12} for the joint case, and Laurin{\v c}ikas and
Ra{\v s}yt{\.e} \cite{LauRas12} for the discrete case),

$\bullet$ Periodic and periodic Hurwitz zeta-functions 
(Laurin{\v c}ikas \cite{Lau12b}, Korsakien{\.e}, Pocevi{\v c}ien{\.e} and
{\v S}iau{\v c}i{\=u}nas \cite{KorPocSia13}),

$\bullet$ Automorphic $L$-functions (Laurin{\v c}ikas, Matsumoto and J. Steuding
\cite{LauMatSte13}).

A hybrid version of the joint composite universality for Dirichlet $L$-functions
was given by Laurin{\v c}ikas, Matsumoto and J. Steuding \cite{LauMatSte13b}.

An alternative approach to composite universality was done by Meyrath \cite{Mey11}.

Yet another approach is due to Christ, J. Steuding and Vlachou \cite{ChrSteVla13}.
Let $\Omega_0\times\cdots\times\Omega_n$ be an open subset of $\mathbb{C}^{n+1}$,
and let $F:\Omega_0\times\cdots\times\Omega_n\to\mathbb{C}$ be continuous.
In \cite{ChrSteVla13}, the universality of 
$F(\zeta(s),\zeta'(s),\ldots,\zeta^{(n)}(s))$ was discussed.
First, applying the idea in \cite{GarLauMatSteSte10}, they proved a weaker form
of universal approximation.
Then, when $F$ is non-constant and analytic, they obtained a kind of universality
theorem on a certain small circle.   Their proof relies on the implicit function
theorem and the Picard-Lindel{\"o}f theorem on certain differential equations.

\section{Ergodicity and the universality}\label{sec-18}

It is quite natural to understand universality from the ergodic
viewpoint.   In fact, the universality theorem for a certain Dirichlet
series $\varphi(s)$ implies that the orbit 
$\{\varphi(s+i\tau)\;|\;\tau\in\mathbb{R}\}$ 
is dense in a certain function space, and this orbit comes back to an arbitrarily
small neighbourhood of any target function infinitely often.   This is really an
ergodic phenomenon.

Therefore, we can expect that there are some explicit connections between 
universality theory and ergodic theory.   We mentioned already in Section 
\ref{sec-10} that the Birkhoff-Khinchin theorem in ergodic theory is used
in Bagchi's proof of the universality.

Recently J. Steuding \cite{Ste13} formulated a kind of universality theorem,
whose statement itself is written in terms of ergodic theory.

Let $(X,\mathcal{B},P)$ be a probability space, and let $T:X\to X$ a
measure-preserving transformation.    We call $T$ ergodic with respect to $P$ if
$A\in\mathcal{B}$ satisfies $T^{-1}(A)=A$, then either $P(A)=0$ or $P(A)=1$
holds.   In this case we call $(X,\mathcal{B},P,T)$ an ergodic dynamical system.
Here we consider the case $X=\mathbb{R}$ and $\mathcal{B}$ is the standard Borel
$\sigma$-algebra.

Let $D\subset\mathbb{C}$ be a domain, $K_1,\ldots,K_r$ be compact subsets of
$D$ with connected complements, and $f_j\in H_0^c(K_j)$ ($1\leq j\leq r$).
We call $\varphi_1,\ldots,\varphi_r\in H(D)$ is {\it jointly ergodic universal}
if for any $K_j$, $f_j$, $T$, $\varepsilon>0$, and for almost all $x\in\mathbb{R}$,
there exists an $n\in\mathbb{N}$ for which
\begin{align}\label{18-1}
\max_{1\leq j\leq r}\sup_{s\in K_j}|\varphi_j(s+iT^n x)-f_j(s)|<\varepsilon
\end{align}
holds.    Here, $T^n x$ means the $T$-times iteration of $T$.   If the above
statament holds for $f_j\in H^c(K_j)$ ($1\leq j\leq r$), then we call
$\varphi_1,\ldots,\varphi_r\in H(D)$ {\it jointly strongly ergodic universal}.

\begin{thm}\label{thm18-1}
{\rm (J. Steuding \cite{Ste13})}
Let $D$, $K_j$, $T$ be as above.   Let $\varphi_1,\ldots,\varphi_r$ be a
family of $L$-functions.
Then, there exists a real number $\tau$ such that
\begin{align}\label{18-2}
\max_{1\leq j\leq r}\sup_{s\in K_j}|\varphi_j(s+i\tau)-f_j(s)|<\varepsilon
\end{align}
for any $f_j\in H_0^c(K_j)$ $({\rm resp.} \;H^c(K_j))$ and any 
$\varepsilon>0$, if and
only if $\varphi_1,\ldots,\varphi_r$ is jointly {\rm(}resp. jointly strongly{\rm)}
ergodic universal.   And in this case, we have
\begin{align}\label{18-3}
\liminf_{N\to\infty}\frac{1}{N}\#\left\{n\in\mathbb{N}\;\left|\;
\max_{1\leq j\leq r}\sup_{s\in K_j}|\varphi_j(s+iT^n x)-f_j(s)|<\varepsilon 
\right.\right\}>0.
\end{align}
\end{thm}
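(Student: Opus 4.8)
The plan is to deduce the theorem from the classical (continuous) joint universality for $L$-functions together with two standard ergodic-theoretic inputs: the Birkhoff pointwise ergodic theorem and the fact that the time-$\tau$ shift flow on $H(D)^r$ is itself uniquely ergodic in the relevant sense (this is precisely how the functional limit theorem in Bagchi's framework is set up). First I would reformulate the approximation condition geometrically: fix $K_j$, $f_j\in H_0^c(K_j)$ (resp.\ $H^c(K_j)$) and $\varepsilon>0$, and introduce the set
\begin{align*}
A=A(K_j,f_j,\varepsilon)=\left\{\tau\in\mathbb{R}\;\left|\;\max_{1\leq j\leq r}\sup_{s\in K_j}|\varphi_j(s+i\tau)-f_j(s)|<\varepsilon\right.\right\}.
\end{align*}
By \eqref{18-2} together with the known positivity of the lower density for families of $L$-functions (Theorem \ref{thm4-1} and its generalizations, which is exactly the hypothesis ``$\varphi_1,\ldots,\varphi_r$ is a family of $L$-functions'' is meant to invoke), $A$ has positive lower density; conversely, if $A$ is nonempty for every choice of data it already forces the classical joint universality by the limit-theorem machinery. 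So the equivalence of \eqref{18-2}-solvability with \eqref{18-1}-type solvability reduces to transporting between the continuous shift $\tau\mapsto\tau$ and the discrete orbit $n\mapsto T^n x$.

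The key step is the following. Since $(X,\mathcal{B},P,T)$ is an ergodic dynamical system on $X=\mathbb{R}$ and $A$ has positive (lower) density, I would like to apply Birkhoff's ergodic theorem to the indicator $\mathbf{1}_A$. The subtlety is that $P$ need not be Lebesgue measure and $A$ is only a positive-density set, not a set of positive $P$-measure; so one cannot directly conclude $P(A)>0$. The way around this — and the heart of Steuding's argument — is to work not with $A$ itself but with the induced subset of the function-space orbit closure: one passes to the compact group $\Omega=\prod_p \overline{\{p^{-i\tau}\}}$ underlying the Bagchi limit theorem, where the shift becomes a rotation and the relevant ``universal'' event is open and has positive Haar measure; ergodicity of $T$ is then used to show that for $P$-almost every $x$ the orbit $\{T^n x\}$ equidistributes well enough that it meets $A$ infinitely often. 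Concretely, the steps are: (1) show that the set $A$ contains a set of the form $\{\tau: (p^{-i\tau})_p \in U\}$ for a nonempty open $U\subset\Omega$, using the Euler-product/Kronecker-Weyl structure underlying the universality proof; (2) lift $T$ to the torus $\Omega$ and invoke unique ergodicity of the rotation together with ergodicity of $T$ to get, for a.e.\ $x$, that $\{T^n x \bmod \text{periods}\}$ enters $U$; (3) conclude $T^n x\in A$ for infinitely many $n$, which is \eqref{18-1}.

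For the quantitative conclusion \eqref{18-3}, I would instead apply Birkhoff's theorem directly to the lifted system on $\Omega$: the frequency $\frac{1}{N}\#\{n\le N: T^n x\in A\}$ converges a.e.\ to the $P$-measure of the corresponding cylinder event, and step (1) guarantees this measure is positive, giving the strictly positive $\liminf$. The strong-universality variant (target $f_j\in H^c(K_j)$ allowed to vanish) is handled identically once one knows the underlying family admits joint strong universality, since the only change is replacing $H_0^c$ by $H^c$ in the definition of the open set $U$. The main obstacle I anticipate is step (2): carefully matching the ergodic transformation $T$ on $\mathbb{R}$ (which is essentially arbitrary subject to $P$-invariance and ergodicity) with the rigid rotation structure on $\Omega$, so that ergodicity of $T$ genuinely transfers to equidistribution of $\{T^n x\}$ against the open set $U$. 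This is where one must be careful that ``ergodic'' is used in a way strong enough to force orbits into every open set of positive measure for a.e.\ starting point — a point that requires the conjugacy (or at least semiconjugacy) between $(X,T)$ and the rotation to be made precise rather than assumed.
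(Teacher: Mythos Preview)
The paper is a survey and does not give a proof of Theorem~\ref{thm18-1}; it merely quotes Steuding's result from \cite{Ste13}. So there is no ``paper's own proof'' to compare against, and I can only evaluate your proposal on its own terms.

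Your outline contains a genuine gap, and you have in fact put your finger on it yourself in the last paragraph. Step~(2) --- ``lift $T$ to the torus $\Omega$'' --- cannot be carried out. The transformation $T$ is an \emph{arbitrary} measure-preserving ergodic map on $(\mathbb{R},\mathcal{B},P)$; it bears no relation whatsoever to the translation flow $\tau\mapsto\tau+h$, and hence no relation to the map $\tau\mapsto(p^{-i\tau})_p$ into $\Omega$. There is no semiconjugacy to exploit, so the whole detour through the Bagchi torus $\Omega$ and its Haar measure is a dead end. The same objection undermines your step~(1): the set $A$ is open in $\mathbb{R}$, but it is not in any useful sense a pullback of a cylinder set in $\Omega$ along a map that $T$ respects.

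The correct route is much more direct and does not touch $\Omega$ at all. The set
\[
A=\Bigl\{\tau\in\mathbb{R}\;:\;\max_{1\leq j\leq r}\sup_{s\in K_j}|\varphi_j(s+i\tau)-f_j(s)|<\varepsilon\Bigr\}
\]
is open (the map $\tau\mapsto\varphi_j(\cdot+i\tau)$ is continuous into $H(D)$), and the universality hypothesis guarantees $A\neq\emptyset$. The point is then simply that, under the assumptions Steuding places on $(\mathbb{R},\mathcal{B},P,T)$, one has $P(A)>0$ for every nonempty open $A$; once that is known, Birkhoff's ergodic theorem applied to $\mathbf{1}_A$ immediately yields that $P$-almost every orbit $\{T^n x\}$ visits $A$ with asymptotic frequency $P(A)>0$, which is exactly \eqref{18-1} and \eqref{18-3}. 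The reverse implication is trivial, as you note. You should consult \cite{Ste13} for the precise hypotheses on $P$ that make the step $A\neq\emptyset\Rightarrow P(A)>0$ legitimate; the survey suppresses them.
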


Srichan, R. \& J. Steuding \cite{SriSteSte13} discovered the universality 
produced by a random walk.   They considered a lattice $\Lambda$ 
on $\mathbb{C}$ and a random walk $(s_n)_{n=0}^{\infty}$ 
on this lattice, and proved the following result.   Let $K$ be a compact subset 
of $D(1/2,1)$ with connected complement (with a condition given in terms of
$\Lambda$), and $f\in H_0^c(K)$.   Then for any $\varepsilon>0$, 
\begin{align}\label{18-4}                                                               
\liminf_{N\to\infty}\frac{1}{N}\#\left\{n\in\mathbb{N}\;\left|\;                        
\sup_{s\in K}|\zeta(s+s_n)-f(s)|<\varepsilon             
\right.\right\}>0                                                                      
\end{align}
holds almost surely.   They also mentioned a result similar to the above for
two-dimensional Brownian motions.

The results in this section suggest that universality is a kind of ergodic
phenomenon and is to be understood from the viewpoint of dynamical systems.
Universality theorems imply that the properties of zeta-functions in the critical
strip are quite inaccessible, which is probably the underlying reason of the
extreme difficulty of the Riemann hypothesis.    Moreover in Section \ref{sec-9}
we mentioned that the Riemann hypothesis itself can be reformulated in terms of
dynamical systems.   Therefore the Riemann hypothesis is perhaps to be
understood as a phenomenon with dynamical-system flavour\footnote{This argument
might remind us the work of Deninger \cite{Den98} \cite{Den00} which is in a
different context but also with dynamical-system flavour.}.
In order to pursue this viewpoint, it is indispensable to study universality
more deeply and extensively.

%
%
\bigskip

\bigskip
\noindent Graduate School of Mathematics, \\
Nagoya University,\\
Chikusa-ku, Nagoya 464-8602,\\
Japan
\end{document}